\setlist{leftmargin=*}
\newtheorem{theorem}{Theorem}
\newtheorem{prop}{Proposition}
\newtheorem{cor}{Corollary}
\newtheorem{remark}{Remark}
\newtheorem{lemma}{Lemma}[section]
\newcommand{\vc}{\mathbf{c}}
\newcommand{\ve}{\mathbf{e}}
\newcommand{\vg}{\mathbf{g}}
\newcommand{\vx}{\mathbf{x}}
\newcommand{\vu}{\mathbf{u}}
\newcommand{\vv}{\mathbf{v}}
\newcommand{\vf}{\mathbf{f}}
\newcommand{\vn}{\mathbf{n}}
\newcommand{\vphi}{\boldsymbol{\phi}}
\newcommand{\va}{\mathbf{a}}
\newcommand{\vb}{\mathbf{b}}
\newcommand{\vH}{\mathbf{H}}
\newcommand{\Th}{\mathcal{T}_h} 
\newcommand{\V}{\mathbb{V}} 
\newcommand{\veps}{\varepsilon}
\newcommand{\Etotal}{E_{\textrm{total}}}
\newcommand{\Emag}{E_{\textrm{mag}}}
\newcommand{\tr}{{\rm tr}} 
\newcommand{\curl}{{\rm curl}\,} 
\newcommand{\di}{\mathop{\rm div}\nolimits} 
\newcommand{\Cstab}{C_\textrm{stab}}
\newcommand{\eps}{\varepsilon}
\renewcommand{\norm}[1]{\|#1\|}
\newcommand\divrg{\mathrm{div}\,}
\title{Projection-Free Method for the Full Frank-Oseen Model of Liquid Crystals}
\author{Lucas Bouck, Ricardo H.\ Nochetto}
\begin{document}

\begin{abstract}
Liquid crystals are materials that experience an intermediate phase where the material can flow like a liquid, but the molecules maintain an orientation order. The Frank-Oseen model is a continuum model of a liquid crystal. The model represents the liquid crystal orientation as a vector field and posits that the vector field minimizes some elastic energy subject to a pointwise unit length constraint, which is a nonconvex constraint. Previous numerical methods in the literature assumed restrictions on the physical constants or had regularity assumptions that ruled out point defects, which are important physical phenomena to model. We present a finite element discretization of the full Frank-Oseen model and a projection free gradient flow algorithm for the discrete problem in the spirit of Bartels (2016). We prove $\Gamma$-convergence of the discrete to the continuous problem: weak convergence of subsequences of discrete minimizers and convergence of energies. We also prove that the gradient flow algorithm has a desirable energy decrease property. Our analysis only requires that the physical constants are positive, which presents challenges due to the additional nonlinearities from the elastic energy.
\end{abstract}

\maketitle

\section{Introduction}

Liquid crystals are materials that experience an intermediate phase of matter between solid and fluid. In the nematic phase, they often retain an orientation order but fail to retain a positional order. They may also react easily to external fields. These properties lead to the use in optical applications \cite{schadt1997liquid, kim2020tunable}. For stationary continuum models of liquid crystals, the main models include the Frank-Oseen model \cite{oseen1933theory, frank1958liquid}, Ericksen model \cite{ericksen1991liquid}, and Landau de-Gennes or $Q$ tensor model \cite{de1993physics}. Two books are dedicated to mathematical modeling of liquid crystals \cite{de1993physics,virga1995variational}. This paper deals with the full Frank-Oseen model. Two numerical challenges are the nonconvex pointwise unit length constraint and the quartic nonlinear structure. This paper addresses these challenges by applying a projection-free gradient flow in the spirit of \cite{bartels2016projection}, and explicit treatment of nonlinearities as in \cite{bartels2022stable}, to derive an energy stable gradient flow for the full Frank-Oseen model. This paper also proves $\Gamma$-convergence, thereby extending results for harmonic maps \cite{bartels2015numerical}.

\subsection{Frank-Oseen model}

The Frank-Oseen Model \cite{oseen1933theory, frank1958liquid} is a continuum model of a nematic liquid crystal occupying a bounded domain $\Omega\subset \mathbb{R}^3$. The model represents the liquid crystal with a director field $\vn: \Omega \to  \mathbb{S}^2:=\{ \vv\in \mathbb{R}^3 : |\vv|=1\}$. At a point $x\in \Omega$, the unit length vector $\vn(x)$ describes the average orientation of the liquid crystal molecules. The Frank-Oseen model \cite{frank1958liquid, oseen1933theory} posits  that $\vn$ minimizes the following elastic energy:
\begin{equation*}
E_{FO}[\vn] = \frac{1}{2}\int_\Omega k_1(\text{div} \vn)^2 + k_2(\vn\cdot \text{curl} \vn)^2 +k_3|\vn\times \text{curl} \vn|^2 + (k_2+k_4)\left(\text{tr}((\nabla \vn)^2) - (\text{div} \vn)^2\right) d\vx
\end{equation*}
over the admissible class of functions $H^1(\Omega;\mathbb{S}^2)$.
The four constants $k_i$ are known as Frank's constants, and correspond to splay ($k_1>0$), twist ($k_2>0$,
  bend ($k_3>0$), and saddle-splay ($k_2+k_4>0$). Ericksen showed that for $E_{FO}[\vn]$ to be positve semi-definite,
the Frank's constants must obey the following relations \cite{ericksen1966,virga1995variational}
\[
2k_1 \ge (k_2+k_4),
\quad
k_2 \ge |k_4|,
\quad
k_3 \ge 0,
\]
Ericksen also showed in \cite{ericksen1962nilpotent} that the saddle-splay only depends on the trace of $\vn$ on
$\partial\Omega$.
Previous analytical work \cite{hardt1986existence} proved existence of minimizers of $E$ over the admissible set
\begin{equation*}
\mathcal{A}_{\vg}:= \{ \vn\in H^1(\Omega;\mathbb{R}^3) : |\vn|=1 \text{ a.e. in } \Omega \text{ and } \vn|_{\partial\Omega} = \vg\},
\end{equation*}
where $\vg:\partial\Omega\to \mathbb{S}^2$ is Lipschitz and $k_i>0$ for $i=1,2,3$. The main idea for proving existence of minimizers is to write a modified but equivalent energy with modified coefficients $c_0>0$ and $c_i\geq 0$ for $i=1,2,3$, because the saddle-splay depends on $\vg$ but not on $\vn$:
 \begin{equation*}
E [\vn] = \frac{1}{2}\int_\Omega c_0|\nabla \vn|^2 + c_1(\di \vn)^2 + c_2(\vn\cdot \curl \vn)^2 +c_3|\vn\times \curl \vn|^2 d\vx. 
\end{equation*}
We recall the relevant results and observations from the analysis in Section \ref{sec:prelim}.

\subsection{Previous related numerical works}\label{sec:prev-numerical-frank}
  
There are many numerical methods to compute minimizers to the Frank-Oseen Energy \cite{adler2015energy, alouges1997new, alouges1997minimizing, bartels2005stability, bartels2016projection, bartels2022quasi, bartels2022benchmarking, cohen1989relaxation, hu2009saddle, hu2014newton, xia2021augmented}. The collection of works \cite{alouges1997new, alouges1997minimizing, bartels2005stability} use a type of steepest descent method. At each step, the steepest descent only searches in tangent directions to linearize the unit length constraint. The violation of the constraint is then controlled by projecting the solution to satisfy the unit length constraint. This projection step often requires {\it weakly acute meshes} to guarantee energy decrease, which creates challenges for meshing in 3D as documented in \cite{nochetto2017finite}.
Other gradient flow methods, so-called projection-free methods, use pseudotime-step parameter to control the constraint violation \cite{bartels2016projection, hu2014newton}. The previous projection-free works have also dealt with simplifications of the Frank-Oseen energy such as the one-constant approximation (also known as harmonic maps) in \cite{bartels2016projection} or $k_2=k_3$ in \cite{hu2014newton}. We discuss these simplifications in more detail in Remark \ref{rmk:simplications-of-E} below.

The other group of methods use a Lagrange multiplier to enforce the unit length constraint \cite{adler2015energy,bartels2022quasi, hu2009saddle}. One advantage of using a Lagrange multiplier is that one can use a Newton method to solve the discrete problem. The analysis of these methods typically require additional regularity on the solution, which excludes point defects. We refer to Remark \ref{rmk:regularity} below for related discussion. Finally, we make note of the Newton type methods explored in these works. The work \cite{adler2015energy} discretizes the system for solving a Newton iteration of the full Frank-Oseen energy and presents an error analysis of the Newton linearization.  Also, the recent work \cite{bartels2022quasi} proved quasioptimal error estimates for a finite element discretization of harmonic maps as long as the solution is regular and stable. The theory in \cite{bartels2022quasi} also suggests super-linear convergence of Newton method if the initial guess is sufficiently close. The related work \cite{bartels2022benchmarking} computationally explores the performance of various optimization methods and discretizations for harmonic maps. 

We also note that there are many other works on numerical methods for nematic liquid crystals; we refer to the review \cite{wang2021modelling} and references therein. Some works we would like to highlight include projection methods for the Ericksen and Q tensor models \cite{nochetto2017finite, borthagaray2020structure}, a projection-free method for the Ericksen model \cite{nochetto2022gamma} and other works on Ericksen and Q-tensor models \cite{walker2020finite, schimming2021numerical, davis1998finite}. 
It should be noted that \cite{walker2020finite} proves $\Gamma$-convergence for the full Ericksen model, while the focus of this paper is on the full Frank-Oseen model, for which such an analysis has been absent in the literature.

\subsection{Our contribution}

This work presents a numerical method for computing minimizers of the full Frank-Oseen energy, without additional restrictions on the elastic constants or triangulation. The method computes minimizers of the modified energy $E$ over the following discrete admissible set:
$$
\mathcal{A}_{\vg, h,\eta} := \Big\{\vv_h\in \mathbb{V}_h: \big\|I_h[|\vv_h|^2-1]\big\|_{L^1(\Omega)}\leq \eta, \big\| \vv_h|_{\partial\Omega} -\vg\big\|_{L^2(\partial\Omega;\mathbb{R}^3)}\leq \eta, \Vert \vv_h\Vert_{L^\infty(\Omega;\mathbb{R}^3)}\leq C \Big\},
$$
where $\mathbb{V}_h$ is the space of continuous piecewise linear vector-valued functions on a triangulation $\mathcal{T}_h$ with mesh size $h$, $\mathcal{N}_h$ is the set of nodes of $\mathcal{T}_h$, and ${I}_h$ is the Lagrange interpolation operator over $\mathcal{T}_h$. Also, $C>1$ is a constant that does not need to change with $h$. Our contributions are as follows.
In Section \ref{sec:discrete-prob}, we prove that the discrete minimization problem $\Gamma$-converges to the continuous problem and prove that discrete minimizers converge up to a subsequence to a minimizer of the continuous problem. The analysis only relies on $k_i>0$, which is what is required by existence \cite{hardt1986existence}. Our analysis extends the $\Gamma$-convergence analysis of \cite[Example 4.6]{bartels2015numerical} for harmonic maps, i.e. $k_1=k_2=k_3$. In Section \ref{sec:proj-free}, we propose a  projection-free gradient flow algorithm to compute critical points of $\tilde{E}$ over $\mathcal{A}_{\vg,h,\eta}$ inspired by \cite{bartels2016projection,bartels2022stable}. Following work first done in \cite{bartels2022stable} and later by \cite{bonito2023gamma} in the context of bilayer plates, we use a linear extrapolation of $(\vn\cdot \curl \vn)^2$ and $|\vn\times \curl \vn|^2$ at every step of the gradient flow. As a result, the gradient flow only requires solving linear systems. Under a mild condition $\tau h^{-1}\leq \Cstab$, where $\Cstab>0$ depends on $k_i$ and the initial data, the gradient flow is energy stable and provides control of the violation of the unit length constraint at nodes in terms of $\tau h^{-1}$. We extend our results in Section \ref{sec:magnetic} to account for a fixed magnetic field. Finally in Section \ref{sec:computation}, we present computational experiments. We highlight quantitative properties of the algorithm as well as the effects of Frank's constants on defect configurations and of an external magnetic field on LC configurations.

\section{Notation and preliminaries}\label{sec:prelim}

Since the $L^2(\Omega)$ norm and inner products are used frequently in this paper, a norm should be assumed to be $L^2$ unless otherwise specified. For $u, v\in L^2(\Omega)$, the $L^2$-inner product will be denoted by $(u,v) = \int_\Omega uv \, d\vx$ and the corresponding norm by $\norm{u} = \sqrt{(u,u)}$. We further shorten notation by denoting the Sobolev norm of a function $u$ as $\norm{u}_{W^{k,p}(\Omega)} = \norm{u}_{W^{k,p}}$ when the domain of integration is clearly $\Omega$.  

\subsection{Properties of the full Frank-Oseen model}

We begin by setting notation and summarizing the results in \cite{hardt1986existence} for the continuous problem.
We first recall the full Frank-Oseen energy, containing splay, twist, and bend as well as saddle-splay energies:
\begin{equation}\label{E:FO}
\begin{aligned}
  E_{FO}[\vn] = \frac{1}{2}\int_\Omega k_1(\divrg \vn)^2 &+ k_2(\vn\cdot {\curl} \vn)^2 +k_3|\vn\times {\curl} \vn|^2
  \\&
  +(k_2+k_4)\big(\tr((\nabla \vn)^2) - (\divrg \vn)^2\big) d\vx. 
\end{aligned}
\end{equation}
We then define the admissible set of director fields as 
\begin{equation}
\mathcal{A}_\vg := \big\{ \vn \in H^1(\Omega;\mathbb{S}^2) : \quad \vn|_{\partial \Omega} = \vg \big\}.
\end{equation}
We assume that $\vg$ is Lipschitz, which implies that $\mathcal{A}_\vg$ is nonempty \cite[Lemma 1.1]{hardt1986existence}.

Every term in $E$ is not problematic from the computational point of view except for the saddle-splay term $\tr((\nabla \vn)^2) - (\divrg \vn)^2$. At first glance, it is not entirely clear that this term is even bounded from below. This poses challenges to both proving existence of minimizers and computation. However, in the presence of Dirichlet boundary conditions, \cite{ericksen1962nilpotent} and \cite[Lemma 1.1]{hardt1986existence} prove that the saddle splay term is constant.

\begin{lemma}[saddle splay]\label{prop:saddle-splay}
There exists a constant $C_\vg$ such that for all $\vn \in \mathcal{A}_\vg$, we have 
\begin{equation}\label{eq:saddle-splay}
C_\vg = \int_\Omega \big(\tr((\nabla \vn)^2) - (\divrg \vn)^2\big) d\vx
\end{equation}
\end{lemma}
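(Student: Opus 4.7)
The plan is to exhibit the saddle-splay integrand as a pure divergence, so that the volume integral in \eqref{eq:saddle-splay} reduces via Gauss--Green to a boundary integral that can be seen to depend only on $\vg$.

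The first step is to verify the pointwise identity
\begin{equation*}
\tr\big((\nabla \vn)^2\big) - (\divrg \vn)^2 = \divrg\big((\vn\cdot\nabla)\vn - (\divrg \vn)\vn\big),
\end{equation*}
which is a direct computation: writing out both components of the right-hand side produces the term $\sum_{j,k}(\partial_k n_j)(\partial_j n_k) = \tr((\nabla\vn)^2)$ from $\sum_k \partial_k((\vn\cdot\nabla)n_k)$, the term $-(\divrg\vn)^2$ from $-\sum_k\partial_k((\divrg\vn)n_k)$, and two mixed second-order contributions of the form $(\vn\cdot\nabla)(\divrg\vn)$ that cancel.

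The next step is to integrate over $\Omega$ and apply the divergence theorem to obtain
\begin{equation*}
\int_\Omega \big(\tr((\nabla\vn)^2) - (\divrg\vn)^2\big)\, d\vx
= \int_{\partial\Omega} \Big[((\vn\cdot\nabla)\vn)\cdot\vnu - (\divrg\vn)(\vn\cdot\vnu)\Big]\, dS,
\end{equation*}
where $\vnu$ denotes the outward unit normal to $\partial\Omega$. Since $\vn\in H^1(\Omega)$ and is bounded, one has to justify Gauss--Green for the vector field $\vf = (\vn\cdot\nabla)\vn - (\divrg\vn)\vn$; this is standard by a density argument approximating $\vn$ by smooth fields in $H^1$ (one does not need the pointwise unit length constraint to invoke the identity, only $H^1$ regularity of the director).

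The third step is to verify that the boundary integrand only sees tangential information. Decompose $\vn|_{\partial\Omega}=\vg$ as $\vg=(\vg\cdot\vnu)\vnu+\vg_\tau$ and split $\nabla \vn = \nabla_\tau \vn + \vnu\,\partial_\vnu \vn$ on $\partial\Omega$. Because $\vn=\vg$ on $\partial\Omega$, every tangential derivative of $\vn$ along $\partial\Omega$ is a derivative of $\vg$, so $\nabla_\tau\vn=\nabla_\tau\vg$. The key observation is that the unknown normal-derivative piece $\partial_\vnu\vn\cdot\vnu$ enters both terms of the integrand:
\begin{equation*}
((\vn\cdot\nabla)\vn)\cdot\vnu = ((\vg_\tau\cdot\nabla_\tau)\vg)\cdot\vnu + (\vg\cdot\vnu)(\partial_\vnu\vn\cdot\vnu),
\end{equation*}
\begin{equation*}
(\divrg\vn)(\vn\cdot\vnu) = \big(\divrg_\tau\vg + \partial_\vnu\vn\cdot\vnu\big)(\vg\cdot\vnu),
\end{equation*}
and the $\partial_\vnu\vn\cdot\vnu$ contributions cancel on subtraction. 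The remaining expression $((\vg_\tau\cdot\nabla_\tau)\vg)\cdot\vnu - (\divrg_\tau\vg)(\vg\cdot\vnu)$ depends only on $\vg$ and the geometry of $\partial\Omega$, giving the constant
\begin{equation*}
C_\vg := \int_{\partial\Omega}\Big[((\vg_\tau\cdot\nabla_\tau)\vg)\cdot\vnu - (\divrg_\tau\vg)(\vg\cdot\vnu)\Big]\, dS.
\end{equation*}

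The main obstacle is the low regularity: $\tr((\nabla\vn)^2)$ and $(\divrg\vn)^2$ are only in $L^1(\Omega)$, and the boundary integrand involves traces of first derivatives, so the boundary identity has to be interpreted distributionally and justified by density. For Lipschitz $\vg$ and $\vn\in H^1(\Omega;\mathbb{S}^2)$ this can be carried out by a standard mollification of $\vn$ in $\Omega$, applying the identity to the smooth approximants, and passing to the limit using the continuity of the trace operator and the identification $\nabla_\tau\vn=\nabla_\tau\vg$ on $\partial\Omega$.
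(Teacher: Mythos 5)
Your core idea — that the saddle-splay density is a pure divergence (equivalently, a null Lagrangian built from $2\times 2$ minors of $\nabla\vn$) — is exactly what the paper attributes to Ericksen and to \cite[Lemma 1.1]{hardt1986existence}, and your pointwise identity
$\tr((\nabla\vn)^2)-(\divrg\vn)^2=\divrg\big((\vn\cdot\nabla)\vn-(\divrg\vn)\vn\big)$
is correct. So the strategy matches the paper's (uncited) proof at a conceptual level.

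However, your route through the boundary integral has a genuine gap at the $H^1$ level that the final paragraph does not close. For $\vn\in H^1(\Omega;\mathbb{S}^2)$, the vector field $\vf=(\vn\cdot\nabla)\vn-(\divrg\vn)\vn$ is merely in $L^2$, not $W^{1,1}$, and $\nabla\vn$ has no trace on $\partial\Omega$, so the displayed boundary integral involving $\partial_\vnu\vn\cdot\vnu$ and $\nabla_\tau\vn$ is not a priori well defined for $\vn\in\mathcal{A}_\vg$; the cancellation of the normal-derivative terms is a manipulation of objects that do not exist for $H^1$ fields. Mollifying $\vn$ does not repair this: if $\vn_\eps\to\vn$ in $H^1$, the volume integrals converge, but the boundary integrals for $\vn_\eps$ involve traces of $\nabla\vn_\eps$, and neither $\vn_\eps|_{\partial\Omega}$ equals $\vg$ nor does $\nabla_\tau\vn_\eps|_{\partial\Omega}$ converge in a space strong enough to pass to the limit in a quadratic boundary expression. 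The standard way to make the null-Lagrangian property rigorous in $H^1$ avoids boundary integrals entirely: for $\vn,\vm\in\mathcal{A}_\vg$ set $\vw:=\vn-\vm\in H^1_0(\Omega;\mathbb{R}^3)$ and expand the difference of the two integrals into terms of the form $\int_\Omega \tr(\nabla\vw\,\nabla\vm)-(\divrg\vw)(\divrg\vm)\,d\vx$ (and the same with $\vm$ replaced by $\vw$). For $\vw$ smooth with compact support one integrates by parts twice, moving both derivatives onto $\vw$, and the two resulting terms are $-\int_\Omega \nabla(\divrg\vw)\cdot\vm$ in both cases, so they cancel; density of $C_c^\infty$ in $H^1_0$ then gives the vanishing for all $\vw\in H^1_0$, and hence $\int_\Omega(\tr((\nabla\vn)^2)-(\divrg\vn)^2)$ is the same for all $\vn\in\mathcal{A}_\vg$. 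This argument requires no trace of $\nabla\vn$ and no regularity of $\vg$ beyond what is needed for $\mathcal{A}_\vg$ to be nonempty. I would replace your Step 3 and the final mollification paragraph with this difference argument, keeping the divergence identity as the motivating observation.
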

The proof of this lemma relies on showing that the saddle splay can be written as a divergence, which means that its contribution only depends on boundary data; this property was first realized by Ericksen \cite{ericksen1962nilpotent}. In the presence of Dirichlet boundary conditions, this means that the saddle splay energy is constant and solely depends on $\vg$ and $\partial\Omega$.

Lemma \ref{prop:saddle-splay} is critical to modify the energy $E_{FO}$ by adding a multiple of $C_\vg$ without changing the minimizers. This leads to the following modified energy $E$ \cite[Corollary 1.3]{hardt1986existence}.
\begin{prop}[modified energy]\label{prop:modified-energy}
Let $c_0 = \min_{i=1,2,3}\{k_i\}>0$ and let $c_i = k_i - c_0\geq 0$. Define $E: \mathcal{A}_\vg\to \mathbb{R}$ by 
\begin{equation}\label{eq:tilde-E}
E[\vn] := {E}[\vn] + \frac{1}{2} \big(c_0 - k_2 - k_4 \big)C_\vg.
\end{equation}
Then, $\vn^*\in \mathcal{A}_\vg$ is a minimizer of $E$ in $\mathcal{A}_\vg$ if and only if $\vn^*$ is a minimizer of $E_{FO}$ in $\mathcal{A}_\vg$.
\end{prop}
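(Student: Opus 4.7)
The proof will show that $E[\vn]$ and $E_{FO}[\vn]$ differ by a constant on $\mathcal{A}_\vg$, which immediately yields that the two functionals have the same minimizers. The bridge between the two expressions is a pointwise identity valid for unit-length vector fields combined with the saddle-splay Lemma \ref{prop:saddle-splay}.

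\textbf{Step 1: pointwise identity.} For any smooth vector field $\vn$, one has
\begin{equation*}
|\curl \vn|^2 = |\nabla \vn|^2 - \tr((\nabla \vn)^2),
\end{equation*}
which follows from the $\epsilon$-$\delta$ identity $\epsilon_{ijk}\epsilon_{ilm} = \delta_{jl}\delta_{km} - \delta_{jm}\delta_{kl}$. When $|\vn|=1$ a.e., the vector $\curl \vn$ decomposes orthogonally into components parallel and perpendicular to $\vn$, whence
\begin{equation*}
|\curl \vn|^2 = (\vn\cdot \curl \vn)^2 + |\vn \times \curl \vn|^2.
\end{equation*}
Adding and subtracting $(\divrg \vn)^2$ then yields the crucial identity
\begin{equation*}
|\nabla \vn|^2 = (\divrg \vn)^2 + (\vn\cdot \curl \vn)^2 + |\vn\times \curl \vn|^2 + \big[\tr((\nabla \vn)^2) - (\divrg \vn)^2\big],
\end{equation*}
which holds a.e.\ for any $\vn \in \mathcal{A}_\vg$.

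\textbf{Step 2: rewrite $E_{FO}$.} Substituting $k_i = c_0 + c_i$ for $i=1,2,3$ into \eqref{E:FO} and grouping the terms multiplied by $c_0$ together gives
\begin{equation*}
E_{FO}[\vn] = \tfrac{1}{2}\!\int_\Omega c_0\big[(\divrg \vn)^2 + (\vn\cdot \curl \vn)^2 + |\vn\times \curl \vn|^2\big] + c_1(\divrg \vn)^2 + c_2(\vn\cdot \curl \vn)^2 + c_3|\vn\times \curl \vn|^2 \, d\vx + \tfrac{1}{2}(k_2+k_4)\!\int_\Omega\!\big[\tr((\nabla \vn)^2) - (\divrg \vn)^2\big] d\vx.
\end{equation*}
Using the identity from Step 1 to replace the $c_0$-bracket with $|\nabla \vn|^2 - [\tr((\nabla \vn)^2) - (\divrg \vn)^2]$ produces
\begin{equation*}
E_{FO}[\vn] = E[\vn] + \tfrac{1}{2}(k_2+k_4-c_0)\int_\Omega\big[\tr((\nabla \vn)^2) - (\divrg \vn)^2\big] d\vx,
\end{equation*}
where $E[\vn]$ is the modified energy featuring $c_0|\nabla\vn|^2$ as the leading term.

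\textbf{Step 3: apply saddle-splay lemma and conclude.} By Lemma \ref{prop:saddle-splay}, the integral in the last display equals $C_\vg$ for every $\vn \in \mathcal{A}_\vg$, so
\begin{equation*}
E[\vn] = E_{FO}[\vn] + \tfrac{1}{2}(c_0 - k_2 - k_4)\,C_\vg \qquad \forall \vn \in \mathcal{A}_\vg.
\end{equation*}
Since the two functionals differ by a constant on $\mathcal{A}_\vg$, the sets of minimizers coincide, which proves the proposition.

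\textbf{Main obstacle.} The only nontrivial ingredient is the pointwise identity of Step 1; once it is in hand, the rest is bookkeeping and a direct appeal to Lemma \ref{prop:saddle-splay}. Some care is needed because the identity $|\curl \vn|^2 = (\vn\cdot \curl \vn)^2 + |\vn\times \curl \vn|^2$ uses $|\vn|=1$ in an essential way, so the argument is genuinely restricted to $\mathcal{A}_\vg$; this is why the \emph{discrete} problem will later require a constraint on $\||\vv_h|^2 - 1\|$ rather than just work with $E$ directly.
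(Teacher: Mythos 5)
Your proof is correct, but it takes a substantially longer route than the paper. As stated (modulo an obvious typo where $E[\vn] := E[\vn] + \dots$ should read $E[\vn] := E_{FO}[\vn] + \dots$), the proposition \emph{defines} $E$ as $E_{FO}$ plus the fixed quantity $\tfrac{1}{2}(c_0 - k_2 - k_4)C_\vg$. Since $C_\vg$ is a constant on $\mathcal{A}_\vg$ by Lemma~\ref{prop:saddle-splay}, the two functionals differ by a constant, and the equivalence of minimizers follows in one line; that is precisely the paper's argument (the sentence immediately following the proposition: ``It is clear that, since $C_\vg$ is a constant, the minimizers of $E$ are also minimizers of $E_{FO}$.'').

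What you did instead is take the \emph{explicit} form $E[\vn] = \tfrac{1}{2}\int_\Omega c_0|\nabla\vn|^2 + c_1(\divrg\vn)^2 + c_2(\vn\cdot\curl\vn)^2 + c_3|\vn\times\curl\vn|^2$ as your working definition, and prove via the pointwise identity $|\nabla\vn|^2 = \tr((\nabla\vn)^2) + |\curl\vn|^2$ together with the orthogonal decomposition $|\curl\vn|^2 = (\vn\cdot\curl\vn)^2 + |\vn\times\curl\vn|^2$ (valid under $|\vn|=1$) that this explicit form equals $E_{FO}$ plus the constant. Your identities and bookkeeping are all correct, and the application of Lemma~\ref{prop:saddle-splay} is the right move. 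However, this computation is not needed for the present proposition; it is the content of the \emph{next} proposition in the paper (``explicit form of $E$''), which the paper proves by the same algebra but in the reverse direction (starting from the definition $E = E_{FO} + \text{const}$ and deriving the explicit form). So your argument collapses two propositions into one. That is a defensible stylistic choice, but if the propositions are meant to be proved separately, this one needs only the trivial constant-shift observation.
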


It is clear that, since $C_\vg$ is a constant, the minimizers of $E$ are also minimizers of $E_{FO}$. However, the explicit form of $E$ is not readily amenable to computation. Below is a proposition that states an explicit form of $E$ \cite{hardt1986existence}, which we prove for completeness.

\begin{prop}[explicit form of $E$]
Let $c_0 = \min_{i=1,2,3}\{k_i\}>0$ and let $c_i = k_i - c_0\geq 0$ for $i=1,2,3$. Then, for $\vn\in H^1(\Omega;\mathbb{S}^2)$ there holds
\begin{equation}\label{eq:E-tilde2}
E[\vn] = \frac{1}{2}\int_\Omega c_0|\nabla \vn|^2 + c_1(\divrg \vn)^2 + c_2(\vn\cdot {\curl} \vn)^2 +c_3|\vn\times {\curl} \vn|^2 d\vx.
\end{equation}
\end{prop}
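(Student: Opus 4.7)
The plan is to reduce the proposition to a single pointwise identity on $H^1(\Omega;\mathbb{S}^2)$ and then do the bookkeeping of coefficients. The key pointwise identity I want to establish is
\begin{equation*}
\tr\bigl((\nabla\vn)^2\bigr) = |\nabla\vn|^2 - (\vn\cdot\curl\vn)^2 - |\vn\times\curl\vn|^2 \qquad \text{a.e. in } \Omega,
\end{equation*}
which holds whenever $|\vn|=1$ a.e. Everything else is algebra.

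First I would prove, for an arbitrary $\vn\in H^1(\Omega;\mathbb{R}^3)$, the classical identity $|\nabla\vn|^2 = |\curl\vn|^2 + \tr((\nabla\vn)^2)$. The cleanest route is to split $\nabla\vn = S+A$ into its symmetric and antisymmetric parts. Then $|\nabla\vn|^2 = |S|^2+|A|^2$, and by cycling indices one checks $|A|^2 = \tfrac12|\curl\vn|^2$, while $\tr((\nabla\vn)^2)=\tr(S^2)+\tr(A^2) = |S|^2 - |A|^2 = |S|^2-\tfrac12|\curl\vn|^2$. Adding gives the identity. Next, for $|\vn|=1$, the Pythagorean decomposition $|\curl\vn|^2 = (\vn\cdot\curl\vn)^2 + |\vn\times\curl\vn|^2$ follows from the fact that $\vn$ has unit length, so $\curl\vn$ splits orthogonally into its $\vn$-parallel and $\vn$-perpendicular components. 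Combining the two displays and rearranging yields the pointwise identity above.

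With that in hand, I would substitute $k_i = c_0+c_i$ for $i=1,2,3$ into the definition \eqref{E:FO} of $E_{FO}$ and invoke Lemma~\ref{prop:saddle-splay} to write
\begin{equation*}
E[\vn] = E_{FO}[\vn] + \tfrac{1}{2}(c_0-k_2-k_4)\int_\Omega\bigl(\tr((\nabla\vn)^2)-(\divrg\vn)^2\bigr)d\vx.
\end{equation*}
Grouping the splay, twist, and bend terms gives
\begin{equation*}
2E[\vn] = \int_\Omega c_1(\divrg\vn)^2 + (c_2+c_0)(\vn\cdot\curl\vn)^2 + (c_3+c_0)|\vn\times\curl\vn|^2 + c_0\bigl(\tr((\nabla\vn)^2)-(\divrg\vn)^2\bigr) d\vx,
\end{equation*}
where the $(\divrg\vn)^2$ coefficient simplifies because $(c_1+c_0)-c_0=c_1$. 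Finally, I substitute the pointwise identity for $\tr((\nabla\vn)^2)$; the $(\vn\cdot\curl\vn)^2$ coefficient collapses from $(c_2+c_0)-c_0$ to $c_2$, the $|\vn\times\curl\vn|^2$ coefficient from $(c_3+c_0)-c_0$ to $c_3$, and $c_0\tr((\nabla\vn)^2)-c_0(\divrg\vn)^2$ contributes the $c_0|\nabla\vn|^2$ term minus the pieces already accounted for, yielding \eqref{eq:E-tilde2}.

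The analytically nontrivial input is the pointwise identity; everything else is substitution and cancellation. The main obstacle is just careful bookkeeping of the coefficients to ensure that the saddle-splay contribution from $E_{FO}$ combines with the constant correction $\tfrac{1}{2}(c_0-k_2-k_4)C_\vg$ in exactly the way needed to promote the quartic-decomposed Frank energy into the manifestly positive form involving $c_0|\nabla\vn|^2$.
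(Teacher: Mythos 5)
Your proposal is correct and uses the same ingredients as the paper's proof: the identity $|\nabla\vn|^2 = \tr((\nabla\vn)^2) + |\curl\vn|^2$ and the unit-length Pythagorean decomposition $|\curl\vn|^2 = (\vn\cdot\curl\vn)^2 + |\vn\times\curl\vn|^2$, combined with the substitution $k_i = c_0 + c_i$. The only organizational difference is that you bundle the two identities into one pointwise formula for $\tr((\nabla\vn)^2)$ and also supply a proof (via the symmetric/antisymmetric split of $\nabla\vn$) of the gradient-curl identity that the paper simply invokes.
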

\begin{proof}
 Using the expression for $C_\vg$ in \eqref{eq:saddle-splay}, we rewrite $E$ in \eqref{eq:tilde-E} as
\begin{align*}
E[\vn] = \frac{1}{2}\int_\Omega c_0\tr((\nabla\vn)^2) +(k_1 - c_0)(\divrg \vn)^2 +k_2(\vn\cdot \curl\vn)^2+k_3|\vn\times \curl\vn|^2d\vx.
\end{align*}
Since $|\vn| = 1$ a.e.\ in $\Omega$, $(\vn\cdot \curl\vn)^2+|\vn\times \curl\vn|^2 = |\curl \vn|^2$. Hence, adding and subtracting $c_0|\curl \vn|^2$ to $E[\vn]$ yields
\begin{align*}
E[\vn] = \frac{1}{2}\int_\Omega c_0\left[\tr((\nabla\vn)^2)+|\curl \vn|^2\right] +&(k_1 - c_0)(\divrg \vn)^2 \\
+&(k_2-c_0)(\vn\cdot \curl\vn)^2+(k_3-c_0)|\vn\times \curl\vn|^2d\vx.
\end{align*}
The assertion follows from the fact that $|\nabla \vn|^2 = \tr((\nabla\vn)^2)+|\curl \vn|^2$ and the definition of $c_i = k_i - c_0$ for $i=1,2,3$.
\end{proof}
The modified energy $E$ immediately looks friendlier than $E_{FO}$. First, it is easy to tell that $E$ is bounded from below. Secondly, $E$ is coercive in $H^1$ because $c_0>0$. Thirdly, $E$ is weakly lower semicontinuous in $\mathcal{A}_\vg$ because each $c_i\geq0$. These lead to existence of minimizers of $E$. These facts are proved in \cite[Lemma 1.4, Theorem 1.5]{hardt1986existence} and are summarized by the following Lemma.
\begin{lemma}[properties of $E$]\label{lem:equicoercive} The modified energy $E$ is w.l.s.c.\ in $H^1(\Omega;\mathbb{S}^2)$ and 
\begin{equation}\label{eq:equicoercivity-estimate}
\frac{1}{2}c_0\int_\Omega |\nabla \vn|^2d\vx\leq E[\vn] \leq 3(k_1+k_2+k_3)\int_\Omega |\nabla \vn|^2d\vx
\end{equation}
for all $\vn\in H^1(\Omega;\mathbb{S}^2)$. Moreover, there exists a minimizer of $E$ over the admissible set $\mathcal{A}_\vg$.
\end{lemma}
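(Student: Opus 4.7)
My plan is to prove the three assertions in order, using the explicit representation \eqref{eq:E-tilde2} of $E$. The bounds in \eqref{eq:equicoercivity-estimate} follow by inspection: dropping the three nonnegative terms with coefficients $c_1, c_2, c_3 \geq 0$ gives the lower bound, while the upper bound is obtained from the pointwise estimates $(\divrg \vn)^2 \leq 3|\nabla \vn|^2$ and $|\curl \vn|^2 \leq 2|\nabla\vn|^2$ (both from Cauchy--Schwarz on the components), combined with $(\vn\cdot\curl\vn)^2 + |\vn\times\curl\vn|^2 \leq |\vn|^2|\curl\vn|^2 = |\curl\vn|^2$ a.e., where the last equality uses $|\vn|=1$. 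Summing and using $c_0 \leq \min_i k_i$ and $c_i \leq k_i$ produces a constant no larger than $3(k_1+k_2+k_3)$.

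The main obstacle is the weak lower semicontinuity of the quartic terms $\int(\vn\cdot\curl\vn)^2$ and $\int|\vn\times\curl\vn|^2$, since neither is convex in $\nabla \vn$. The terms $\int c_0|\nabla\vn|^2$ and $\int c_1(\divrg\vn)^2$ are wlsc as continuous quadratic seminorms on $H^1$. For the remaining two, I plan to exploit the unit-length constraint. Given $\vn_k \rightharpoonup \vn$ in $H^1(\Omega;\mathbb{R}^3)$ with $|\vn_k|=1$ a.e., Rellich--Kondrachov yields (along a subsequence) a.e.\ convergence and hence $|\vn|=1$ a.e.; dominated convergence then upgrades this to strong convergence in every $L^p$, $p<\infty$. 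Since $|\vn_k|=1$ a.e., the sequence $f_k := \vn_k\cdot\curl\vn_k$ is bounded in $L^2$, so a further subsequence satisfies $f_k \rightharpoonup f$ in $L^2$. Testing against $\phi\in C_c^\infty(\Omega)$ via
\begin{equation*}
\int \phi\, \vn_k\cdot\curl\vn_k \, d\vx = \int (\phi\vn_k)\cdot\curl\vn_k \, d\vx,
\end{equation*}
where $\phi\vn_k \to \phi\vn$ strongly in $L^2$ and $\curl\vn_k \rightharpoonup \curl\vn$ in $L^2$, identifies $f=\vn\cdot\curl\vn$, and wlsc of $\|\cdot\|_{L^2}^2$ finishes this term. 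Uniqueness of the limit forces the whole sequence (not just the subsequence) to obey the wlsc inequality. The argument for $|\vn\times\curl\vn|^2$ is identical componentwise, using the cross product in place of the dot product.

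Existence of a minimizer then follows by the direct method. A minimizing sequence $\vn_k \in \mathcal{A}_\vg$ is bounded in $H^1$ thanks to the lower bound in \eqref{eq:equicoercivity-estimate} and the pointwise constraint $|\vn_k|=1$. Extracting a weakly convergent subsequence with limit $\vn^*$, the previous step yields $|\vn^*|=1$ a.e., and the Dirichlet condition $\vn^*|_{\partial\Omega}=\vg$ passes to the limit by weak continuity of the trace operator on $H^1$, so $\vn^* \in \mathcal{A}_\vg$. The wlsc established above gives $E[\vn^*] \leq \liminf E[\vn_k] = \inf_{\mathcal{A}_\vg} E$, proving that $\vn^*$ is a minimizer.
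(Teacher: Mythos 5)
Your proof is correct. The paper itself gives no proof of this lemma and simply cites \cite[Lemma~1.4, Theorem~1.5]{hardt1986existence}, so you are filling in an argument from scratch. The bounds in \eqref{eq:equicoercivity-estimate} are exactly as you say; dropping the $c_1,c_2,c_3$ terms gives the lower bound, and combining $(\divrg\vn)^2\le 3|\nabla\vn|^2$, $|\curl\vn|^2\le 2|\nabla\vn|^2$, the identity $(\vn\cdot\curl\vn)^2+|\vn\times\curl\vn|^2=|\curl\vn|^2$ on $\mathbb{S}^2$, and $c_0,c_i\le k_i$ yields a prefactor well within $3(k_1+k_2+k_3)$. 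Your handling of the quartic terms for weak lower semicontinuity is the novel part and is sound: the constraint $|\vn_k|=1$ a.e.\ together with Rellich--Kondrachov gives strong $L^p$ convergence of $\vn_k$ (for any $p<\infty$), so $\phi\vn_k\to\phi\vn$ strongly in $L^2$ while $\curl\vn_k\rightharpoonup\curl\vn$, identifying the weak $L^2$ limit of $\vn_k\cdot\curl\vn_k$ and $\vn_k\times\curl\vn_k$, after which w.l.s.c.\ of $\|\cdot\|_{L^2}^2$ applies. The ``whole sequence'' claim is the standard subsequence-of-subsequences argument applied to a minimizing subsequence of the $\liminf$ of $E$; it would be worth spelling out, but it is correct. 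The existence step via the direct method is fine.

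One comparative remark: your w.l.s.c.\ argument uses the constraint $|\vn_k|=1$ essentially (to obtain strong $L^p$ convergence of $\vn_k$), so it proves the lemma exactly as stated, over $H^1(\Omega;\mathbb{S}^2)$. The paper's Remark~\ref{rem:wlsc}, however, asserts w.l.s.c.\ of $E$ over the larger space $H^1(\Omega;\mathbb{R}^3)$ with the observation that ``the proof only relies on $c_i\ge 0$.'' This points to the alternative convexity-based route: for each fixed $\vn$ the integrand is a nonnegative quadratic form in $\nabla\vn$, i.e., a Carath\'eodory integrand convex in the gradient, and w.l.s.c.\ then follows from the classical Tonelli-type theorem without invoking the unit-length constraint. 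That route generalizes more directly to the unconstrained setting used in Lemma~\ref{lem:discrete-wlsc}. Your approach would still apply there, but only under a uniform $L^\infty$ bound on the sequence (which, fortunately, is built into $\mathcal{A}_{\vg,h,\eta}$), since in $3$D mere $H^1$-boundedness does not give the domination needed for your DCT step.
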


\begin{remark}[modified energy $E$]\label{rem:wlsc}
\rm  
The proof of the weak lower semicontinuity only relies on the fact that $c_i\geq 0$. Also the coercivity only relies\ on $c_0>0$. Hence the coercivity and weak lower semicontinuity of $E$ defined in \eqref{eq:E-tilde2} hold over the space $H^1(\Omega; \mathbb{R}^3)$, which is larger than $H^1(\Omega; \mathbb{S}^2)$. Thus, we will compute with $E$ as defined in \eqref{eq:E-tilde2}.
\end{remark}

\begin{remark}[simplifications of $E$]\label{rmk:simplications-of-E}
\rm
If $k_1 = k_2 = k_3 = 1$, then $E$ takes on the form
\begin{equation*}
E[\vn] = \frac{1}{2}\int_\Omega|\nabla \vn|^2d\vx,
\end{equation*}
which is known as the {\it one constant} approximation. Also, if $k_2 = k_3 > k_1$, then $E$
becomes
 \begin{equation*}
E[\vn] = \frac{1}{2}\int_\Omega k_1|\nabla \vn|^2 + c_2|\curl \vn|^2d\vx,
\end{equation*}
which was studied in \cite{glowinski2003operator,hu2014newton}.
\end{remark}

\subsection{Discretization}

We first define some notations for the discrete problem and summarize some useful results. We consider a sequence of quasiuniform, shape-regular triangulations $\Th$ of $\Omega$. The set of nodes of $\Th$ is denoted by $\mathcal{N}_h$. The space of continuous piecewise linear vector fields is defined by
\begin{equation*}
\mathbb{V}_h := \big\{\vv_h\in C^0(\Omega;\mathbb{R}^3) : \vv_h|_T\in \mathcal{P}_1 \quad \forall T\in \Th \big\}.
\end{equation*}
Similarly, $\mathbb{Q}_h$ denotes the space of continuous piecewise linear real-valued functions:
\begin{equation*}
\mathbb{Q}_h := \big\{v_h\in C^0(\Omega) : v_h|_T\in \mathcal{P}_1 \quad \forall T\in \Th \big\}.
\end{equation*}
We also set $\mathbb{V}_{h,0} := \big \{\vv_h\in \mathbb{V}_h : \vv_h|_{\partial\Omega} = 0 \big\}$ (resp. $\mathbb{Q}_{h,0}$) to be the discrete space of vector-valued fields (resp. scalar fields) with zero boundary conditions.

Another space that will be useful in the {\it gradient flow} algorithm is the space $\mathbb{T}_h(\vn_h)$ of tangent directions to $\vn_h$ at nodes, namely
\begin{equation*}
\mathbb{T}_h(\vn_h) := \big\{\vv_h\in\mathbb{V}_{h,0}: \vv_h(z)\cdot \vn_h(z) = 0 \quad \forall z\in \mathcal{N}_h\big\}.
\end{equation*}
Additionally, given a pseudotime-step $\tau>0$, we let the discrete time derivative be the backward difference:
\begin{equation*}
d_t\vn^{k+1}_h := \frac{1}{\tau}\left(\vn^{k+1}_h - \vn^{k}_h\right).
\end{equation*}

We now state two useful results without proof that are needed for the numerical method. The first result is a Corollary of \cite[Lemma 2.1]{bartels2016projection}.
\begin{lemma}[discrete unit length constraint]\label{lem:unit-length-bartels}
Let $\vn_h$ be a uniformly bounded sequence in $H^1(\Omega;\mathbb{R}^3)$ and further suppose $\vn_h\to \vn$ strongly in $L^2(\Omega;\mathbb{R}^3)$. If $\lim_{h\to0}\norm{I_h[|\vn_h|^2 - 1]}_{L^1} = 0$, then $|\vn| = 1$ a.e. in $\Omega$.
\end{lemma}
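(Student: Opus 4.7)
The plan is to show that $\||\vn|^2 - 1\|_{L^1(\Omega)} = 0$ by a triangle-inequality argument that controls three quantities: (i) $|\vn|^2 - |\vn_h|^2$, (ii) $|\vn_h|^2 - I_h[|\vn_h|^2]$, and (iii) $I_h[|\vn_h|^2] - 1 = I_h[|\vn_h|^2 - 1]$. Item (iii) vanishes as $h \to 0$ by hypothesis, item (i) vanishes from strong $L^2$ convergence plus uniform $L^2$ bounds, and item (ii) will be shown to vanish as $O(h^2)$ via a standard nodal interpolation estimate, exploiting that $\vn_h$ is piecewise linear so that $|\vn_h|^2$ is piecewise quadratic with easily controllable second derivatives.

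First I would argue (i) by writing $|\vn_h|^2 - |\vn|^2 = (\vn_h - \vn)\cdot(\vn_h + \vn)$ and applying Cauchy--Schwarz to get
\begin{equation*}
\big\| |\vn_h|^2 - |\vn|^2 \big\|_{L^1(\Omega)} \leq \|\vn_h - \vn\|_{L^2} \, \|\vn_h + \vn\|_{L^2} \to 0,
\end{equation*}
since $\vn_h \to \vn$ in $L^2$ and $\|\vn_h + \vn\|_{L^2}$ stays bounded (the sequence $\vn_h$ is uniformly $H^1$-bounded, hence uniformly $L^2$-bounded).

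Next I would handle (ii) elementwise. On each $T \in \Th$, the function $\vn_h|_T$ is affine, so $|\vn_h|^2|_T$ is quadratic and its Hessian satisfies $D^2(|\vn_h|^2)|_T = 2(\nabla \vn_h|_T)^\top (\nabla \vn_h|_T)$, a constant tensor on $T$ with $|D^2(|\vn_h|^2)|_T| \leq C|\nabla \vn_h|_T|^2$. The standard Lagrange interpolation error bound then gives
\begin{equation*}
\big\| I_h[|\vn_h|^2] - |\vn_h|^2 \big\|_{L^1(T)} \leq C h_T^2 \, |T| \, |\nabla \vn_h|_T|^2 = C h_T^2 \, \|\nabla \vn_h\|_{L^2(T)}^2.
\end{equation*}
Summing over $T \in \Th$ and using the uniform $H^1$ bound on $\vn_h$ yields $\| I_h[|\vn_h|^2] - |\vn_h|^2 \|_{L^1(\Omega)} \leq C h^2 \to 0$.

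Finally, since $I_h[1]=1$, we have $I_h[|\vn_h|^2]-1 = I_h[|\vn_h|^2 - 1]$, whose $L^1$ norm tends to zero by hypothesis. Combining (i)--(iii) by the triangle inequality gives $\||\vn|^2 - 1\|_{L^1(\Omega)} = 0$, hence $|\vn| = 1$ a.e.\ in $\Omega$. The only subtle step is (ii), where one must recognize that although $\vn_h$ itself has no second derivatives, the \emph{scalar} function $|\vn_h|^2$ is piecewise quadratic and its Hessian is controlled pointwise by the (piecewise constant) gradient of $\vn_h$; all other ingredients are routine consequences of strong $L^2$ convergence and uniform $H^1$ boundedness.
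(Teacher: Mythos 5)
Your proof is correct and follows essentially the same route as the paper's (which outsources this lemma to Bartels and reuses the identical decomposition and interpolation estimate in the proof of Lemma~\ref{lem:recovery-sequence}): split $|\vn|^2 - 1$ into the $L^2$-convergence term, the nodal-interpolation term controlled by $h^2\|\nabla\vn_h\|^2$, and the term that vanishes by hypothesis. The only point worth stating explicitly is that the interpolation bound $\|v - I_h v\|_{L^1(T)} \lesssim h_T^2\|D^2 v\|_{L^1(T)}$ is applied here to a continuous piecewise quadratic $v=|\vn_h|^2$, so nodal values are well defined and the constant is uniform by shape regularity; you have this right.
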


We next state a discrete Sobolev inequality that connects the $L^\infty$-norm and $H^1$-norm. This result is an easy consequence of a global inverse inequality and Sobolev imbedding and is well known \cite[Remark 3.8]{bartels2015numerical}.

\begin{lemma}[discrete Sobolev inequality]\label{lem:discrete-sob}
Let $\vv_h\in \mathbb{V}_{h,0}$. There is a constant $c_{inv}$ independent of $h$ such that for all $\vv_h\in \mathbb{V}_{h,0}$: 
\begin{equation*}
\norm{\vv_h}_{L^\infty}\leq c_{inv} h^{-1/2}\norm{ \nabla \vv_h}.
\end{equation*}
\end{lemma}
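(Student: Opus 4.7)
The strategy is to interpolate between a local inverse inequality on each simplex and the continuous Sobolev embedding $H^1_0(\Omega)\hookrightarrow L^6(\Omega)$ valid in $\mathbb{R}^3$; the chosen exponent $6$ is exactly what makes the powers of $h$ come out to $-1/2$.

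First, on any fixed simplex $T\in\Th$, all norms on the finite dimensional space $\mathcal{P}_1(T;\mathbb{R}^3)$ are equivalent, and a standard scaling argument from a reference simplex $\hat T$ to $T$ (using quasiuniformity so that $|T|\simeq h^3$) gives the local inverse estimate
\begin{equation*}
\norm{\vv_h}_{L^\infty(T)} \leq C\, h^{-3/6}\,\norm{\vv_h}_{L^6(T)} = C\, h^{-1/2}\,\norm{\vv_h}_{L^6(T)}.
\end{equation*}
Taking the maximum over $T\in\Th$ on the left and noting $\sum_T\norm{\vv_h}_{L^6(T)}^6=\norm{\vv_h}_{L^6(\Omega)}^6$, so that $\max_T\norm{\vv_h}_{L^6(T)}\leq \norm{\vv_h}_{L^6(\Omega)}$, yields the global inverse inequality
\begin{equation*}
\norm{\vv_h}_{L^\infty(\Omega)} \leq C\, h^{-1/2}\,\norm{\vv_h}_{L^6(\Omega)}.
\end{equation*}

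Next, since $\Omega\subset\mathbb{R}^3$ and $\vv_h\in\mathbb{V}_{h,0}\subset H^1_0(\Omega;\mathbb{R}^3)$, the Sobolev embedding $H^1_0(\Omega)\hookrightarrow L^6(\Omega)$ in three dimensions combined with the Poincar\'e inequality gives
\begin{equation*}
\norm{\vv_h}_{L^6(\Omega)} \leq C_{\mathrm{Sob}}\,\norm{\nabla \vv_h}_{L^2(\Omega)}.
\end{equation*}
Chaining the two displayed inequalities produces the asserted bound with $c_{inv} := C\cdot C_{\mathrm{Sob}}$, independent of $h$ by quasiuniformity of the family $\{\Th\}$.

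There is no real obstacle here; the only point requiring a moment of care is the exponent $6$, which must be used precisely because $H^1\hookrightarrow L^p$ fails for $p=\infty$ in three dimensions but holds for $p=6$, and because $h^{-d/p}=h^{-1/2}$ exactly when $d=3$ and $p=6$. The Dirichlet boundary condition enters only to replace the full $H^1$ norm by $\norm{\nabla\vv_h}_{L^2}$ via Poincar\'e.
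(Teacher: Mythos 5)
Your proof is correct, and it is exactly the argument the paper has in mind: the paper does not give a proof but states that the lemma is ``an easy consequence of a global inverse inequality and Sobolev imbedding,'' citing \cite[Remark 3.8]{bartels2015numerical}, which is precisely the chaining of the $L^6\to L^\infty$ inverse estimate (with factor $h^{-3/6}=h^{-1/2}$) and the embedding $H^1_0(\Omega)\hookrightarrow L^6(\Omega)$ in three dimensions that you carry out. The only cosmetic remark is that for $\vv_h\in H^1_0$ the Sobolev inequality $\norm{\vv_h}_{L^6}\lesssim\norm{\nabla\vv_h}_{L^2}$ already holds without a separate appeal to Poincar\'e, but invoking it does no harm.
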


\section{Discrete minimization problem}\label{sec:discrete-prob}

The discrete minimization problem mimics the continuous problem. The main differences are that rather than enforcing the constraint $|\vn_h| = 1$ pointwise, which would lead to locking, the constraint is enforced at the nodes of the mesh $\Th$ and relaxed by a parameter $\eta>0$. The discrete admissible set is then
\begin{equation}\label{eq:discrete-admin-set}
\mathcal{A}_{\vg, h,\eta} := \big\{\vv_h\in \mathbb{V}_h: \big\|I_h[|\vv_h|^2-1]\big\|_{L^1}\leq \eta,\;\big\| \vv_h|_{\partial\Omega} -\vg\big\|_{L^2(\partial\Omega;\mathbb{R}^3)}\leq \eta,\;  \Vert \vv_h\Vert_{L^\infty}\leq C \big\}.
\end{equation}
We note that $C>1$ is a fixed constant; we only need a uniform $L^\infty$ bound of $\vv_h$ rather than an $L^\infty$ control of the constraint. The parameter $\eta=\eta_h$ satisfies $\eta_h\to0$ as $h\to0$.

The discrete problem is to find $\vn_{h,\eta}$ such that 
\begin{equation*}
\vn_{h,\eta} \in \text{argmin}_{\vv_h\in \mathcal{A}_{\vg, h,\eta}} E[\vv_h].
\end{equation*}
The next task is to prove convergence of the discrete minimizers.

\subsection{ Convergence of minimizers}

The framework follows that of $\Gamma$-convergence. Recall that $\mathcal{A}_\vg$ is nonempty if $\vg$ is Lipschitz. We first construct a recovery sequence.

\begin{lemma}[recovery sequence] \label{lem:recovery-sequence}
Let $\vn\in \mathcal{A}_\vg$. There exists a sequence $\eta_h=C(\Omega) h^{1/2} \|\vn\|_{H^1}\to0$ and $\vn_{h}\in \mathcal{A}_{\vg,h,\eta_h}$ such that $\vn_{h}\to \vn$ in $H^1(\Omega;\mathbb{R}^3)$ and $E[\vn_{h}]\to E[\vn]$ as $h\to0$.
\end{lemma}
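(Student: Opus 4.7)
The plan is to build $\vn_h$ as the nodal interpolant of a mollification of $\vn$, with boundary nodes adjusted to the Dirichlet data $\vg$; a single scale $\epsilon = \epsilon(h) \asymp h^{1/2}$ governs the trade-off between smoothing and discretization. First, extend $\vn$ to $\tilde{\vn}\in H^1(\mathbb{R}^3;\mathbb{R}^3)$ by a standard extension followed by truncation to enforce $\|\tilde{\vn}\|_{L^\infty}\leq 1$, preserving $\|\tilde{\vn}\|_{H^1(\mathbb{R}^3)}\leq C(\Omega)\|\vn\|_{H^1}$; in the construction it is convenient to first extend $\vg$ Lipschitzly across $\partial\Omega$ so that the extension stays near the sphere in a neighborhood of the boundary. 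With a positive mollifier $\rho_\epsilon$, set $\vn^\epsilon := \rho_\epsilon * \tilde{\vn}\in C^\infty$, so that $\|\vn^\epsilon\|_{L^\infty}\leq 1$, $\vn^\epsilon \to \vn$ in $H^1(\Omega)$, and $\|\vn^\epsilon\|_{H^2(\Omega)}\leq C\epsilon^{-1}\|\vn\|_{H^1}$. Define $\vn_h \in \mathbb{V}_h$ nodewise by $\vn_h(z) := \vg(z)$ at boundary nodes and $\vn_h(z) := \vn^\epsilon(z)$ at interior nodes. Since $|\vg|=1$ and $|\vn^\epsilon|\leq 1$, the bound $\|\vn_h\|_{L^\infty}\leq 1 \leq C$ holds, and the boundary error equals $\|I_h\vg - \vg\|_{L^2(\partial\Omega)} \leq Ch\|\vg\|_{W^{1,\infty}(\partial\Omega)}$, well within $\eta_h$.

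For the unit-length constraint, the key identity is that $|\vn|=1$ a.e.\ implies, at any interior $z$ with distance larger than $\epsilon$ from $\partial\Omega$,
\[
1 - |\vn^\epsilon(z)|^2 = \int \rho_\epsilon(z-y)\,|\vn(y) - \vn^\epsilon(z)|^2 \, dy \geq 0.
\]
Summing these over interior nodes with nodal weights $\|\phi_z\|_{L^1}\sim h^3$ and applying Poincar\'e on balls $B_\epsilon(z)$ bounds $\|I_h[|\vn_h|^2-1]\|_{L^1(\Omega)}$ by $C\epsilon^2 \|\vn\|_{H^1}^2$ plus a boundary-strip remainder controlled by the Lipschitz continuation of $\vg$. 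The choice $\epsilon \asymp h^{1/2}$ then yields $\eta_h \leq C(\Omega) h^{1/2}\|\vn\|_{H^1}$, absorbing the remaining power of $\|\vn\|_{H^1}$ into $C(\Omega)$ for $h$ sufficiently small.

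For $H^1$ and energy convergence, decompose
\[
\vn_h - \vn = (\vn_h - I_h\vn^\epsilon) + (I_h\vn^\epsilon - \vn^\epsilon) + (\vn^\epsilon - \vn).
\]
The second term is $O(h\|\vn^\epsilon\|_{H^2}) = O(h^{1/2}\|\vn\|_{H^1})$ by the standard Lagrange interpolation estimate, the third vanishes as $\epsilon\to 0$, and the first (a boundary correction supported in a strip of width $\sim h$) is controlled by an inverse estimate and the Lipschitz continuation of $\vg$. Hence $\vn_h\to \vn$ in $H^1(\Omega)$. The quadratic terms in $E[\vn_h]$ converge in $L^1$ directly from $H^1$-strong convergence. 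For the quartic terms, combine the uniform $L^\infty$ bound on $\vn_h$, Sobolev embedding $H^1\hookrightarrow L^p$ for $p<6$ (which yields strong $L^p$ convergence of $\vn_h$), and the factorization $\|f_h^2 - f^2\|_{L^1} \leq \|f_h - f\|_{L^2} \|f_h + f\|_{L^2}$ applied to $f_h = \vn_h \cdot \curl \vn_h$ and $f_h = \vn_h \times \curl \vn_h$.

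The main obstacle is the joint satisfaction of the three discrete requirements—unit length, exact nodal boundary data, and $H^1$-convergence—given that $\vn$ may have point defects and thus admits no direct Lagrange interpolant. The scale $\epsilon \asymp h^{1/2}$ is chosen precisely to balance the smoothing error $\|\vn^\epsilon - \vn\|_{H^1}$, the interpolation error $h\|\vn^\epsilon\|_{H^2}$, and the variance-type contribution $\epsilon^2 \|\nabla\vn\|^2$ to the unit-length defect, and this balance is what produces the specific rate $\eta_h = C(\Omega) h^{1/2}\|\vn\|_{H^1}$ announced in the lemma.
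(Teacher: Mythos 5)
Your construction via mollification at scale $\epsilon\asymp h^{1/2}$ followed by nodal Lagrange interpolation, with exact enforcement of $\vg$ at boundary nodes, cannot deliver $H^1$-convergence of $\vn_h$ to $\vn$, and this gap is structural rather than a missing detail. The boundary correction $\vw_h:=\vn_h-I_h\vn^\epsilon$ is a piecewise linear function that vanishes at interior nodes and equals $\vg(z)-\vn^\epsilon(z)$ at boundary nodes. A local inverse estimate gives $\|\nabla\vw_h\|_{L^2}^2 \sim h\sum_{z\in\partial\Omega\cap\mathcal{N}_h}|\vg(z)-\vn^\epsilon(z)|^2$. Even with the most favorable extension $\tilde\vn = G + \tilde\vu$ ($G$ a Lipschitz extension of $\vg$, $\tilde\vu\in H^1_0(\Omega)$ extended by zero), one gets at a boundary node $|\vg(z)-\vn^\epsilon(z)|\le |\rho_\epsilon*G(z)-G(z)| + |\rho_\epsilon*\tilde\vu(z)|$; the Lipschitz part contributes $\sim L\epsilon$ per node, hence after summing $\|\nabla\vw_h\|^2 \gtrsim h\cdot h^{-2}\cdot L^2\epsilon^2 = L^2\epsilon^2 h^{-1}$, which with $\epsilon\asymp h^{1/2}$ is $O(1)$, not $o(1)$. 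The $\tilde\vu$ part (via Hardy/Poincar\'e on an $\epsilon$-strip) contributes the same order or worse. Thus $\vn_h\not\to\vn$ in $H^1$.

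This difficulty is not fixable by tuning $\epsilon$. Writing $\vn_h - \vn = (\vn_h - I_h\vn^\epsilon) + (I_h\vn^\epsilon - \vn^\epsilon) + (\vn^\epsilon - \vn)$, the middle term requires $h\|\vn^\epsilon\|_{H^2}\sim h/\epsilon \to 0$, i.e.\ $\epsilon\gg h$; but the boundary correction (or, if you drop the nodal enforcement of $\vg$ and instead use the trace inequality $\|\vn_h-\vg\|_{L^2(\partial\Omega)}\lesssim\|\vn_h-\vn\|^{1/2}\|\vn_h-\vn\|_{H^1}^{1/2}$) gives an $L^2(\partial\Omega)$ error of order $(\epsilon\|\vn\|_{H^1}\cdot\|\vn\|_{H^1})^{1/2}=\epsilon^{1/2}\|\vn\|_{H^1}$, so to reach the announced rate $\eta_h\asymp h^{1/2}\|\vn\|_{H^1}$ you would need $\epsilon\lesssim h$. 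These two requirements are incompatible, which is precisely why a two-step mollify-then-Lagrange-interpolate construction is suboptimal here. The paper avoids the conflict entirely by taking $\vn_h=\mathcal{I}_h\vn$ to be the Cl\'ement (quasi-)interpolant, which acts directly on $H^1$ functions, is $H^1$-stable, satisfies the $L^2$-error bound $\|\vn_h-\vn\|\lesssim h\|\vn\|_{H^1}$ and the $L^\infty$ bound $\|\vn_h\|_{L^\infty}\le 1$, and converges in $H^1$ by density. With these, the multiplicative trace inequality alone gives the $h^{1/2}$ boundary rate, and the unit-length estimate follows from a split into $\||\vn_h|^2-1\|_{L^1}$ and $\||\vn_h|^2-I_h[|\vn_h|^2]\|_{L^1}$ using $\partial^2_{\alpha\beta}|\vn_h|^2=2\partial_\alpha\vn_h\cdot\partial_\beta\vn_h$. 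Your identity $1-|\vn^\epsilon(z)|^2=\int\rho_\epsilon(z-y)|\vn(y)-\vn^\epsilon(z)|^2\,dy$ and the subsequent Poincar\'e/overlap argument for the interior constraint estimate are correct and close in spirit to the paper's approach, but the boundary layer breaks the overall construction. You should replace the mollification step with a Cl\'ement-type quasi-interpolant and rerun the remaining arguments.
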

\begin{proof}
Let $\vn\in \mathcal{A}_\vg \neq \emptyset$. We proceed in three steps.

{\it 1. Approximation}: Let $\vn_h = \mathcal{I}_h\vn$ be a Cl\'{e}ment interpolant of $\vn$,
i.e. $\vn_h$ is defined by
$$
\vn_h := \sum_{z\in \mathcal{N}_h}\vn_z\phi_z, \quad \vn_z := |\omega_z|^{-1}\int_{\omega_z}\vn,
$$
where $\{\phi_z\}_{z\in \mathcal{N}_h}$ is the nodal basis of $\mathbb{V}_h$, and $\vn_z$ is the average of $\vn$ over the patch $\omega_z$.

We have that $\vn_h\to \vn$ in $H^1(\Omega;\mathbb{R}^3)$ and the $L^2$-error estimate $\|\vn_h-\vn\|\lesssim h \|\vn\|_{H^1}$ holds. We also have the uniform $L^\infty$ bound 
$$
\Vert \vn_h\Vert_{L^\infty}\leq \Vert \vn\Vert_{L^\infty}=1.
$$
Applying the standard trace inequality, there is a constant $C=C(\Omega)$ such that
\[
\Vert \vn_h|_{\partial\Omega} - \vg\Vert_{L^2(\partial \Omega;\mathbb{R}^3)}\leq C
\Vert \vn_h-\vn\Vert^{1/2} \Vert \vn_h-\vn\Vert_{H^1( \Omega;\mathbb{R}^3)}^{1/2} \le Ch^{1/2} \|\vn\|_{H^1} = \eta_h\to0.
\]

{\it 2. Constraint:} We next show that $\Vert I_h[|\vn_h|^2 - 1]\Vert_{L^1}\to0$. We first bound the error by triangle inequality
\begin{equation}\label{eq:triangle-ineq}
\Vert I_h[|\vn_h|^2 - 1]\Vert_{L^1} \leq \Vert |\vn_h|^2 - I_h[|\vn_h|^2]\Vert_{L^1} + \Vert |\vn_h|^2 - 1\Vert_{L^1}
\end{equation}
We use $ |\vn|^2=1$ a.e., the vector identity $ |\va|^2 - |\vb|^2 = | \va - \vb|^2 +2\vb\cdot(\va-\vb)$, and Cauchy-Schwarz inequality to bound the second term of the RHS of \eqref{eq:triangle-ineq}:
$$
\Vert |\vn_h|^2 - 1\Vert_{L^1} \leq \Vert \vn_h - \vn\Vert^2+2\Vert \vn\Vert\Vert \vn_h- \vn\Vert.
$$
The $L^2$ error estimate $\Vert \vn_h - \vn\Vert\lesssim h \Vert \vn\Vert_{H^1}$ then gives the bound 
\begin{equation}\label{eq:second-term}
\Vert |\vn_h|^2 - 1\Vert_{L^1} \lesssim h(\Vert \vn\Vert_{H^1}+\Vert\vn\Vert)\Vert \vn\Vert_{H^1}.
\end{equation}
The bound on the first term of the RHS of \eqref{eq:triangle-ineq} follows arguments from the proof of \cite[Lemma 2.1]{bartels2016projection}. Over an element $T$, we use an  interpolation estimate in $L^1(T)$ and the fact that $\partial_{\alpha\beta}^2|\vn_h|^2= 2\partial_\alpha\vn_h\cdot \partial_\beta\vn_h$ a.e. in $\Omega$ because $\vn_h$ is piecewise linear to obtain
$$
\Vert |\vn_h|^2 - I_h[|\vn_h|^2]\Vert_{L^1(T)}\lesssim h^2 \Vert D^2[|\vn_h|^2]\Vert_{L^1(T;\mathbb{R}^{3\times3})}\lesssim h^2 \Vert \nabla\vn_h\Vert_{L^2(T;\mathbb{R}^{3\times3})}^2.
$$
Summing over elements and using the $H^1$ stability of the Clement interpolant yields
\begin{equation}\label{eq:first-term}
\Vert |\vn_h|^2 - I_h[|\vn_h|^2]\Vert_{L^1(\Omega)}\lesssim h^2 \Vert \nabla\vn_h\Vert_{L^2(\Omega;\mathbb{R}^{3\times3})}^2\lesssim h^2\Vert \nabla\vn\Vert_{L^2(\Omega;\mathbb{R}^{3\times3})}^2.
\end{equation}
Inserting the estimates \eqref{eq:second-term} and \eqref{eq:first-term} into \eqref{eq:triangle-ineq} shows
$$
\Vert I_h[|\vn_h|^2 - 1]\Vert_{L^1} \lesssim h^2\Vert \nabla\vn\Vert^2+ h\left(\Vert \vn\Vert_{H^1}+\Vert\vn\Vert\right)\Vert \vn\Vert_{H^1}.
$$
Hence, for sufficiently small $h$ depending on shape regularity, $\Vert I_h[|\vn_h|^2 - 1]\Vert_{L^1}\leq C h \|\vn\|_{H^1}^2 \leq C(\Omega)h^{1/2} \|\vn\|_{H^1} = \eta_h$, which goes to 0 as $h\to0$.

{\it 3. Energy}: What is left to show is that the energies converge. Clearly, 
\[
\int_\Omega c_0|\nabla \vn_h|^2+c_1(\divrg \vn_h)^2 \to \int_\Omega c_0|\nabla \vn|^2+c_1(\divrg \vn)^2.
\]
Therefore, we need to show the convergence of the energies for the quartic terms. We focus our attention on $\Vert \vn_h\cdot \curl \vn_h\Vert^2$ first. Note that it suffices to prove 
\begin{equation}\label{eq:twist-limit}
\lim_{h\to0} \Vert \vn_h\cdot \curl \vn_h\Vert= \Vert \vn\cdot \curl \vn\Vert
\end{equation}
because $x\mapsto x^2$ is continuous. By triangle inequality, we have
\[
{\big |}\Vert \vn_h\cdot \curl \vn_h\Vert- \Vert \vn\cdot \curl \vn\Vert{\big|} \leq \Vert \vn_h\cdot (\curl \vn_h - \curl \vn)\Vert + \Vert (\vn_h - \vn)\cdot \curl \vn\Vert
\]
The first term goes to zero because $\Vert \curl \vn_h - \curl \vn \Vert\to0$ and a uniform $L^\infty$ bound on $\vn_h$. For the second term, we extract a pointwise convergent subsequence $\vn_{h_k}\to \vn$ such that $\lim_{h_k\to0}\Vert (\vn_{h_k} - \vn)\cdot \curl \vn\Vert=\limsup_{h\to0}\Vert (\vn_h - \vn)\cdot \curl \vn\Vert$. By the uniform $L^\infty$ bound $\Vert\vn_{h_k}\Vert_{L^\infty(\Omega;\mathbb{R}^3)}\leq 1$, we have the pointwise bound $|(\vn_{h_k} - \vn)\cdot \curl \vn|\leq |\vn_{h_k}\cdot \curl \vn|+|\vn\cdot \curl \vn| \leq 2|\curl \vn|\in L^2(\Omega)$. Hence by dominated convergence theorem, $\Vert (\vn_{h_k} - \vn)\cdot \curl \vn\Vert\to0$, and $\limsup_{h\to0}\Vert (\vn_h - \vn)\cdot \curl \vn\Vert = 0$. Thus, $\lim_{h\to0}\Vert (\vn_h - \vn)\cdot \curl \vn\Vert = 0$, and \eqref{eq:twist-limit} is proved. The same arguments apply to $\Vert \vn_h\times \curl \vn_h\Vert^2$ and the proof is complete.
\end{proof}

\begin{remark}
\rm  
Note that the uniform $L^\infty$ bound on $\vn_h$ is important for Step 3 in the proof of Lemma \ref{lem:recovery-sequence}. This is part of the reason for the enforcement of the $L^\infty$ bound in the definition of $\mathcal{A}_{\vg,h,\eta}$
\end{remark}

\begin{remark}
\rm In contrast to the $L^\infty$ bound on $\vn_h$, we only needed to estimate $I_h[|\vn_h|^2-1]$ in $L^1$. This justifies that the definition of $\mathcal{A}_{\vg,h,\eta}$ involves $\Vert I_h[|\vn_h|^2-1]\Vert_{L^1}$. Moreover, the gradient flow of Section \ref{sec:proj-free} provides a bound for $\|\vn_h\|_{L^\infty}$ and estimates for $\Vert I_h[|\vn_h|^2-1]\Vert_{L^1}$.
\end{remark}


The next two results are important for compactness of minimizers as well as a liminf inequality in the $\Gamma$-convergence framework.

\begin{lemma}[equicoercivity]\label{lem:equicoercivity-discrete}
The modified energy $E$ satisfies  
\begin{equation}
\frac{1}{2}c_0\int_\Omega |\nabla \vn_h|^2d\vx\leq E[\vn_h]
\end{equation}
for all $\vn_h\in\mathcal{A}_{\vg,h,\eta}$.
\end{lemma}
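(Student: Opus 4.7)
The plan is to observe that this is an essentially immediate consequence of the explicit form \eqref{eq:E-tilde2} of the modified energy $E$, combined with the nonnegativity of the coefficients $c_1, c_2, c_3$. Per Remark \ref{rem:wlsc}, even though the derivation of \eqref{eq:E-tilde2} used $|\vn|=1$ a.e., we take \eqref{eq:E-tilde2} as the \emph{definition} of $E$ on the larger space $H^1(\Omega;\mathbb{R}^3) \supset \mathbb{V}_h$. Thus for any $\vn_h \in \mathcal{A}_{\vg,h,\eta}$ we may write
\[
E[\vn_h] = \frac{1}{2}\int_\Omega c_0|\nabla \vn_h|^2 + c_1(\divrg \vn_h)^2 + c_2(\vn_h\cdot \curl \vn_h)^2 + c_3|\vn_h \times \curl \vn_h|^2\, d\vx
\]
without requiring $|\vn_h|=1$ pointwise or any relaxed constraint on $\vn_h$.

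Recalling from Proposition \ref{prop:modified-energy} that $c_0 = \min_{i=1,2,3} k_i > 0$ and $c_i = k_i - c_0 \ge 0$ for $i=1,2,3$, each of the last three integrands is pointwise nonnegative. Dropping them yields the desired bound
\[
\frac{1}{2}c_0 \int_\Omega |\nabla \vn_h|^2\, d\vx \le E[\vn_h].
\]

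There is no real obstacle here: the only subtlety is conceptual, namely that the discrete admissible set $\mathcal{A}_{\vg,h,\eta}$ only controls $I_h[|\vn_h|^2-1]$ in $L^1$ rather than enforcing a pointwise unit length condition, so one must invoke \eqref{eq:E-tilde2} as a \emph{formula} rather than as an identity derived under $|\vn_h|=1$. Note also that neither the $L^\infty$ bound nor the boundary condition from $\mathcal{A}_{\vg,h,\eta}$ plays any role; this estimate will simply supply, together with a uniform bound $E[\vn_h] \le M$, an $H^1$ bound on minimizing sequences to be used in the compactness step of the $\Gamma$-convergence argument.
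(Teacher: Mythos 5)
Your proof is correct and follows the same route as the paper: both invoke Remark \ref{rem:wlsc} to treat the explicit formula \eqref{eq:E-tilde2} as the definition of $E$ on $H^1(\Omega;\mathbb{R}^3)$, and then use $c_0>0$, $c_1,c_2,c_3\ge 0$ to drop the nonnegative terms. You simply spell out the details that the paper delegates to its citation of Lemma \ref{lem:equicoercive} and Remark \ref{rem:wlsc}.
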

\begin{proof}
  The coercivity from Lemma \ref{lem:equicoercive} (properties of $E$) holds for any $\vn\in H^1(\Omega;\mathbb{R}^3)$,
whence it holds for any $\vn_h\in\mathcal{A}_{\vg,h,\eta}$; see Remark \ref{rem:wlsc}.
\end{proof}

\begin{lemma}[weak lower semicontinuity]\label{lem:discrete-wlsc}
If $\vn_{h,\eta}\in \mathcal{A}_{\vg,h,\eta}$ is such that $\vn_{h,\eta}\rightharpoonup \vn^*$ in
    $H^1(\Omega;\mathbb{R}^3)$ as $\eta,h\to 0$, then
\begin{equation}
E[\vn^*] \leq \liminf_{h,\eta\to0} E[\vn_{h,\eta}].
\end{equation}
\end{lemma}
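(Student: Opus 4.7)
The plan is to split $E$ into its four summands and handle each one separately. Since $\vn_{h,\eta}\rightharpoonup \vn^*$ in $H^1(\Omega;\mathbb{R}^3)$, the two quadratic-in-gradient terms
\[
\int_\Omega c_0|\nabla \vn|^2\,d\vx, \qquad \int_\Omega c_1(\divrg \vn)^2\,d\vx
\]
are convex and continuous on $H^1$, hence weakly lower semicontinuous. The real work is in the twist and bend terms.

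For those, I will prove the stronger statement
\[
\vn_{h,\eta}\cdot\curl \vn_{h,\eta} \rightharpoonup \vn^*\cdot \curl \vn^*, \qquad \vn_{h,\eta}\times\curl \vn_{h,\eta} \rightharpoonup \vn^*\times \curl \vn^* \quad \text{in } L^2(\Omega),
\]
after which weak lower semicontinuity of the squared $L^2$-norm yields the required liminf inequalities for $\int_\Omega c_2(\vn\cdot\curl\vn)^2$ and $\int_\Omega c_3|\vn\times\curl\vn|^2$. The two cases are analogous, so I focus on the dot product.

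To establish this weak $L^2$ convergence, I first upgrade Rellich's strong $L^2$ convergence $\vn_{h,\eta}\to\vn^*$ to strong $L^p$ convergence for every $p<\infty$. The uniform bound $\|\vn_{h,\eta}\|_{L^\infty}\leq C$ built into $\mathcal{A}_{\vg,h,\eta}$ passes to the weak-$*$ limit so that $\|\vn^*\|_{L^\infty}\leq C$, and then
\[
|\vn_{h,\eta}-\vn^*|^p \leq (2C)^{p-2}\,|\vn_{h,\eta}-\vn^*|^2
\]
gives $\vn_{h,\eta}\to \vn^*$ in $L^p$ for any $p<\infty$. Given a scalar test function $\phi\in L^2(\Omega)$, I write
\[
\big(\phi,\, \vn_{h,\eta}\cdot\curl \vn_{h,\eta} - \vn^*\cdot\curl \vn^*\big) = \big(\phi(\vn_{h,\eta}-\vn^*),\, \curl \vn_{h,\eta}\big) + \big(\phi\vn^*,\, \curl \vn_{h,\eta}-\curl \vn^*\big).
\]
The second term tends to $0$ because $\curl \vn_{h,\eta}\rightharpoonup \curl \vn^*$ in $L^2$ and $\phi\vn^*\in L^2$ (using $\vn^*\in L^\infty$). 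For the first, Cauchy--Schwarz gives a bound by $\|\phi(\vn_{h,\eta}-\vn^*)\|_{L^2}\|\curl \vn_{h,\eta}\|_{L^2}$; the second factor is uniformly bounded from $H^1$ boundedness, while the first tends to $0$ by dominated convergence after extracting an a.e.~convergent subsequence (with $L^1$-dominating function $4C^2|\phi|^2$). Since the candidate limit is the same for every subsequence, the whole sequence converges weakly, as claimed.

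The main obstacle is precisely this weak$\,\times\,$strong product: $\curl \vn_{h,\eta}$ only converges weakly in $L^2$, yet it is multiplied by another factor that also depends on $h$. The uniform $L^\infty$ bound encoded in the discrete admissible set $\mathcal{A}_{\vg,h,\eta}$ is exactly what enables the interpolation-to-$L^p$ step and thereby makes the strong factor strong enough to pair against the weak one.
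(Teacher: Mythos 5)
Your proof is correct, and it supplies the argument that the paper delegates to the cited reference (Hardt--Kinderlehrer--Lin, Lemma~1.4) with the modification flagged in Remark~\ref{rem:wlsc}: treat the $c_0,c_1$ terms by convexity, and for the quartic terms pass $\vn_{h,\eta}\cdot\curl\vn_{h,\eta}$ (and the cross-product analogue) to the weak $L^2$ limit by pairing the weak $L^2$ convergence of $\curl\vn_{h,\eta}$ against the strong $L^2$ convergence of $\vn_{h,\eta}$ and the uniform $L^\infty$ bound built into $\mathcal{A}_{\vg,h,\eta}$. This is essentially the same mechanism as the reference the paper appeals to, and your use of the explicit $L^\infty$ bound from the discrete admissible set, in place of the pointwise unit-length constraint available in the continuous setting, is precisely the point of Remark~\ref{rem:wlsc}.
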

\begin{proof}
This proof follows the proof of lower semicontinuity of \cite[Lemma 1.4]{hardt1986existence}; see Lemma \ref{lem:equicoercive} and Remark \ref{rem:wlsc}.
\end{proof}
Combining Lemmas \ref{lem:recovery-sequence} (recovery sequence),  \ref{lem:equicoercivity-discrete} (equicoercivity), and \ref{lem:discrete-wlsc} (weak lower semicontinuity) leads to the main convergence result.
\begin{theorem}[convergence of minimizers]\label{thm:conv-min}
Let $h\to0$. There exists a sequence $\{\eta_h\}_h$ with $\eta_h\to0$ as $h\to0$ such that the sequence of minimizers $\vn^*_{h,\eta_h}$ of $E$ over the admissible set $\mathcal{A}_{\vg, h,\eta_h}$ admits a subsequence (not relabeled) $\vn^*_{h,\eta_h}$ such that $\vn^*_{h,\eta_h}\rightharpoonup \vn^*$ in $H^1(\Omega;\mathbb{R}^3)$ and $\vn^*\in \mathcal{A}_{\vg}$ is a minimizer of $E$ over $\mathcal{A}_\vg$. Moreover, $E[\vn^\ast_{h,\eta_h}]\to E[\vn^\ast]$ as $h\to0$.
\end{theorem}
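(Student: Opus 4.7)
The plan is to combine the three \(\Gamma\)-convergence ingredients already at hand (Lemmas \ref{lem:recovery-sequence}, \ref{lem:equicoercivity-discrete}, \ref{lem:discrete-wlsc}) with the existence result for the continuous problem (Lemma \ref{lem:equicoercive}) and the unit-length closure Lemma \ref{lem:unit-length-bartels}. First I would fix a minimizer \(\vn^{**}\in\mathcal{A}_\vg\) of \(E\) over \(\mathcal{A}_\vg\), guaranteed by Lemma \ref{lem:equicoercive}, and apply Lemma \ref{lem:recovery-sequence} to this \(\vn^{**}\). This yields the choice \(\eta_h := C(\Omega)\,h^{1/2}\|\vn^{**}\|_{H^1}\to 0\) and a recovery sequence \(\vn_h\in\mathcal{A}_{\vg,h,\eta_h}\) with \(\vn_h\to\vn^{**}\) in \(H^1\) and \(E[\vn_h]\to E[\vn^{**}]\). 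With this \(\eta_h\) fixed, the set \(\mathcal{A}_{\vg,h,\eta_h}\) is nonempty (it contains \(\vn_h\)), closed, and bounded in the finite-dimensional space \(\mathbb{V}_h\) (thanks to the \(L^\infty\) constraint), and \(E\) is continuous on \(\mathbb{V}_h\); hence a discrete minimizer \(\vn^*_{h,\eta_h}\) exists, and minimality gives the upper bound \(\limsup_{h\to 0}E[\vn^*_{h,\eta_h}]\leq \lim_{h\to 0}E[\vn_h]=E[\vn^{**}]\).

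Next I would extract a weakly convergent subsequence. Equicoercivity (Lemma \ref{lem:equicoercivity-discrete}) combined with the upper bound controls \(\|\nabla\vn^*_{h,\eta_h}\|\), and together with the uniform \(L^\infty\) bound built into \(\mathcal{A}_{\vg,h,\eta_h}\) this gives uniform boundedness in \(H^1(\Omega;\mathbb{R}^3)\). A subsequence (not relabeled) satisfies \(\vn^*_{h,\eta_h}\rightharpoonup\vn^*\) in \(H^1\) and, by Rellich--Kondrachov, \(\vn^*_{h,\eta_h}\to\vn^*\) strongly in \(L^2\). To show \(\vn^*\in\mathcal{A}_\vg\), I would verify the two admissibility conditions separately: \(|\vn^*|=1\) a.e.\ follows from Lemma \ref{lem:unit-length-bartels}, since \(\|I_h[|\vn^*_{h,\eta_h}|^2-1]\|_{L^1}\leq\eta_h\to 0\) and the sequence converges strongly in \(L^2\); and \(\vn^*|_{\partial\Omega}=\vg\) follows from the continuity of the trace operator \(H^1(\Omega)\to L^2(\partial\Omega)\) on weakly convergent sequences, combined with the bound \(\|\vn^*_{h,\eta_h}|_{\partial\Omega}-\vg\|_{L^2(\partial\Omega;\mathbb{R}^3)}\leq\eta_h\to 0\), so that the traces converge to \(\vg\) in \(L^2(\partial\Omega)\) and the weak limit must agree.

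Finally I would close the argument via weak lower semicontinuity (Lemma \ref{lem:discrete-wlsc}):
\[
E[\vn^*]\leq\liminf_{h\to 0}E[\vn^*_{h,\eta_h}]\leq\limsup_{h\to 0}E[\vn^*_{h,\eta_h}]\leq E[\vn^{**}]=\min_{\vv\in\mathcal{A}_\vg}E[\vv].
\]
Since \(\vn^*\in\mathcal{A}_\vg\), the chain collapses to equality, so \(\vn^*\) is itself a minimizer and \(E[\vn^*_{h,\eta_h}]\to E[\vn^*]\) along the subsequence. The part I expect to be most delicate is not the \(\Gamma\)-convergence scaffolding itself but the verification of the boundary condition in the limit: the recovery sequence only controls the trace defect by \(O(h^{1/2})\), so one cannot invoke pointwise or interpolation arguments and must instead exploit weak continuity of the trace on the minimizing subsequence. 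It is also essential that \(\eta_h\) be fixed once and for all (using the continuous minimizer \(\vn^{**}\)) before the compactness step, so that the discrete problem is well-posed and the upper bound is in hand for every \(h\).
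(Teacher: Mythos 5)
Your proof is correct, and it mirrors the paper's strategy of combining the recovery sequence, equicoercivity, and weak lower semicontinuity lemmas. The one place where you genuinely streamline the argument is the final characterization step. The paper fixes a ball $B_R$ containing all minimizers, proves $E[\vn^*]\le E[\vn]$ for every competitor $\vn\in\mathcal{A}_\vg\cap B_R$ (using the recovery sequence applied to each such $\vn$), and then runs a separate Step~3 that builds a second recovery sequence $\vn'_{h,\eta}\to\vn^*$ to conclude $\limsup_h E[\vn^*_{h,\eta_h}]\le E[\vn^*]$. You instead fix a continuous minimizer $\vn^{**}$ once, at the outset, and apply the recovery lemma only to it; then the chain
\[
E[\vn^*]\le\liminf_h E[\vn^*_{h,\eta_h}]\le\limsup_h E[\vn^*_{h,\eta_h}]\le E[\vn^{**}]=\min_{\mathcal{A}_\vg}E,
\]
together with $\vn^*\in\mathcal{A}_\vg$ (hence $E[\vn^*]\ge\min E$), collapses to equalities and delivers both the minimality of $\vn^*$ and the energy convergence in one stroke. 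This saves the construction of a second recovery sequence and the bookkeeping with $B_R$, at the mild cost of phrasing the argument around a single chosen minimizer rather than an arbitrary admissible competitor. You also make explicit that discrete minimizers exist (closed bounded subset of a finite-dimensional space, continuous $E$), which the paper leaves implicit. Both approaches are sound; yours is a legitimate simplification of the paper's Steps~2--3.
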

\begin{proof}
The set of minimizers of $E$ in $\mathcal{A}_\vg$ is non-empty in
view of Lemma \ref{lem:equicoercive} (properties of $E$). Let $m = \inf_{\vn\in \mathcal{A}_\vg}E[\vn]$, $R^2 = |\Omega| +2mc_0^{-1} $, and $B_R$ be the $H^1$ ball of radius $R$. 
The set of minimizers $\{\vn\in \mathcal{A}_\vg : E[\vn] = m\}\subseteq B_R$ because $|\vn| = 1$ a.e. implies $\|\vn\|^2\le |\Omega|$ and the estimate \eqref{eq:equicoercivity-estimate} controls $\|\nabla\vn\|^2$.
We now proceed in 3 steps.

\smallskip
{\it 1. Convergence:} Let $\vn\in \mathcal{A}_\vg \cap B_R$. By Lemma \ref{lem:recovery-sequence} (recovery sequence), we have that there is a sequence $\eta = \eta_h = C(\Omega)R h^{1/2}$ such that $\eta_h\to0$ as $h\to0$, and there is a sequence $\vn_{h,\eta}\in \mathcal{A}_{\vg,h,\eta}$ such that $\vn_{h,\eta}\to \vn$ in $H^1(\Omega;\mathbb{R}^3)$ and $\limsup_{h\to0} E[\vn_{h,\eta}] \leq E[\vn]$. 

Using the fact that $\vn^*_{h,\eta}$ is a minimizer of $E$, we have that $E[\vn^*_{h,\eta}]\leq E[\vn_{h,\eta}] $, and
$$
\limsup_{h,\eta\to0} E[\vn^*_{h,\eta}]\leq \limsup_{h,\eta\to0} E[\vn_{h,\eta}]\leq E[\vn].
$$
Thus, $E[\vn^*_{h,\eta}]$ is bounded, and by Lemma \ref{lem:equicoercivity-discrete} (equicoercivity) and the uniform bound $\Vert \vn^*_{h,\eta}\Vert \leq |\Omega|^{1/2}\Vert \vn^*_{h,\eta}\Vert_{L^\infty} \leq C|\Omega|^{1/2}$, we have that there exists a $\vn^*\in H^1(\Omega;\mathbb{R}^3)$ such that there is a subsequence (not relabled) $\vn^*_{h,\eta}\rightharpoonup \vn^*$ in $H^1(\Omega;\mathbb{R}^3)$ as $h\to0$. To see that $\vn^*\in\mathcal{A}_{\vg}$, we need to prove that $\vn^*$ satisfies the unit length constraint pointwise a.e. and the Dirichlet boundary condition. Since $\Vert I_h[|\vn^*_{h,\eta}|^2-1]\Vert_{L^1}\le\eta\to0$, Lemma \ref{lem:unit-length-bartels} (discrete unit length constraint) yields $|\vn^*|=1$ a.e. in $\Omega$.

 We now must show that $\vn^* |_\Omega = \vg$ in the sense of trace. Since the trace operator is weakly continuous from $H^1(\Omega)$ to $L^2(\partial\Omega;\mathbb{R}^3)$, we deduce $(\vn^*_{h,\eta}-\vn^*)|_{\partial\Omega}\rightharpoonup 0$. But $\|\vn^*_{h,\eta}-\vg\|_{L^2(\partial\Omega;\mathbb{R}^3)}\le\eta\to0$, whence $\vn^*|_{\partial\Omega}=\vg$ as desired.

\smallskip
{\it 2. Characterization of $\vn^*$:} We shall now proceed to show that $\vn^*$ is a minimizer. By Lemma \ref{lem:discrete-wlsc} (weak lower semicontinuity), $\liminf_{h,\eta\to0} E[\vn^*_{h,\eta}]\geq E[\vn^*]$. 
We then have 
$$
E[\vn^{*}]\leq \liminf_{h,\eta\to0} E[\vn^*_{h,\eta}]\leq \limsup_{h,\eta\to0} E[\vn^*_{h,\eta}]\leq\limsup_{h,\eta\to0} E[\vn_{h,\eta}]\leq  E[\vn].
$$
Note that $E[\vn^{*}]\leq E[\vn]$ for all $\vn\in \mathcal{A}_{\vg}\cap B_R$, so $\vn^{*}$ is a minimizer due to the choice of $R$.

\smallskip
{\it 3. Energy}: The final claim is $ \lim_{h,\eta\to0} E[\vn^*_{h,\eta}] = E[\vn^{*}]$. Since $E[\vn^{*}]\leq \liminf_{h,\eta\to0} E[\vn^*_{h,\eta}]$, it suffices to prove $\limsup_{h,\eta\to0} E[\vn^*_{h,\eta}]\leq E[\vn^{*}]$. By Lemma \ref{lem:recovery-sequence} (recovery sequence), we construct $\vn_{h,\eta}'\to \vn^*$ in $H^1(\Omega;\mathbb{R}^3)$ such that $\limsup_{h,\eta\to0} E[\vn'_{h,\eta}]\leq  E[\vn^*]$. We use the assumption that $\vn^*_{h,\eta}$ is a minimizer of $E$, i.e. $E[\vn_{h,\eta}^*] \le E[\vn_{h,\eta}']$, to prove
$$
\limsup_{h,\eta\to0} E[\vn^*_{h,\eta}]\leq\limsup_{h,\eta\to0} E[\vn'_{h,\eta}]\leq E[\vn^{*}],
$$
which is the desired bound. This completes the proof.
\end{proof}
\begin{remark}\label{rmk:regularity}
  \rm
The present theory only requires $H^1(\Omega;\mathbb{R}^3)$-regularity of the solution, and thus allows for point defects of LC. This contrasts with the theory in other papers \cite{adler2015energy,bartels2022quasi, hu2009saddle}, which require higher regularity. The higher regularity requirements in \cite{bartels2022quasi, hu2009saddle} yield error estimates for harmonic maps while the higher regularity in \cite{adler2015energy} provides error estimates for solving the Newton linearizations of the Frank-Oseen energy. On the other hand, the $\Gamma$-convergence theory does not provide error estimates. They require a different approach.
\end{remark}

\section{Projection-free gradient flow for discrete problem}\label{sec:proj-free}

In this section, we propose a gradient flow algorithm to compute critical points of $E$ over the discrete admissible set $\mathcal{A}_{\vg,h,\eta}$. The main idea follows that of \cite{bartels2016projection,bartels2022stable} to gain control of the violation of the unit length constraint and quartic nonlinearity. Recall the modified full Frank energy:
\begin{equation*}
E[\vn_h] = E_1[\vn_h] + E_2[\vn_h],
\end{equation*}
where $E_1$ is the quadratic part of the energy and $E_2$ contains the quartic contributions
\begin{align*}
E_1[\vn_h] &:= \frac{1}{2}\int_\Omega c_0|\nabla \vn|^2 + c_1(\divrg \vn)^2d\vx,\\
 E_2[\vn_h] &:= \frac{1}{2}\int_\Omega c_2(\vn\cdot {\curl} \vn)^2 +c_3|\vn\times {\curl} \vn|^2 d\vx.
\end{align*}

The gradient flow involves a minimization problem at each step. Our goal is to find an increment $d_t\vn^{k+1}_h$, and set $\vn_h^{k+1} := \vn_h^{k}+\tau d_t\vn^{k+1}_h$. In order to make sure the minimization problem involves a linear problem to solve, there are two linearizations to consider.

We first linearize the constraint. Rather than enforcing $|\vn^{k+1}_h(z)|=1$, which is a nonconvex constraint,
  we search for $d_t\vn^{k+1}_h\in \mathbb{T}(\vn_h^{k})$ in the tangent space $\mathbb{T}(\vn_h^{k})$ to $\vn_h^{k}$. Figure \ref{fig:bartels-idea} shows what an increment $d_t\vn^{k+1}_h$ looks like at a node $z$. Moreover, searching in tangent directions within $\mathbb{T}(\vn_h^{k})$ allows for control of the constraint violation in terms of $\tau$
\[
\big|\vn^{k}_h(z)+\tau d_t\vn^{k+1}_h(z)\big|^2 = \big|\vn^{k}_h(z)\big|^2 + \tau^2 \big|d_t\vn^{k+1}_h(z)\big|^2.
\]
The second linearization acts on $E_2$, and entails the minimization problem for $d_t\vn^{k+1}_h$
\begin{equation}
\tau d_t\vn^{k+1}_h\in \text{argmin}_{\vv_h\in \mathbb{T}_h(\vn^{k}_h)}\left(\frac{1}{2\tau}\Vert \vv_h\Vert^2_{*}+ E_1[\vn^{k}_h+\vv_h ]+\frac{\delta E_2[\vn^{k}_h; \vv_h]}{\delta\vn}\right),
\end{equation}
where $\Vert \cdot \Vert_{*}$ is a norm induced by some flow metric. In order for $\frac{\delta E_2[\vn^{k}_h; \tau d_t\vn^{k+1}_h]}{\delta\vn}$ to be a bounded and controlled quantity, we ought to control $d_t\vn^{k+1}_h$ in $H^1$ , and the linearization of $E_2$ ought to be continuous on $H^1$, which means $\vn^{k}_h$ needs to be bounded in $L^\infty$. The desired control of $d_t\vn^{k+1}_h$ in $H^1$ motivates the choice of the $H^1$ norm for the flow metric i.e.\ $\Vert \vv_h\Vert^2_{*} =\Vert \nabla\vv_h\Vert^2$. Control of $\Vert \nabla d_t\vn^{k+1}_h\Vert^2$ and the inverse inequality from $L^\infty$ to $H^1$ in Lemma \ref{lem:discrete-sob} (discrete Sobolev inequality), dictates a stability constraint between $h$ and $\tau$. The resulting linear system for $ d_t\vn^{k+1}_h,$ reads
\begin{equation}\label{eq:grad-flow-eq}
(1+c_0\tau) (\nabla d_t\vn^{k+1}_h,\nabla \vv_h)+c_1\tau(\divrg d_t\vn^{k+1}_h,\divrg \vv_h) = -\frac{\delta E[\vn^{k}_h;\vv_h]}{\delta\vn} \quad \forall \vv_h\in \mathbb{T}(\vn_h^k).
\end{equation}
To recap, there are three main ingredients:
\begin{itemize}
\item Control of $\Vert \nabla d_t\vn^{k+1}_h\Vert^2$ from the flow metric. 
\item Control the violation of the unit length constraint in terms of the pseudotime-step $\tau$.
\item Control of the linearization of $E_2$ upon combining uniform bounds for $\|\vn_h^k\|_{L^\infty}$
  and $\Vert \nabla d_t\vn^{k+1}_h\Vert^2$.
\end{itemize}
This strategy originated in the context of bilayer plates \cite{bartels2022stable} and was also used in \cite{bonito2023gamma}.
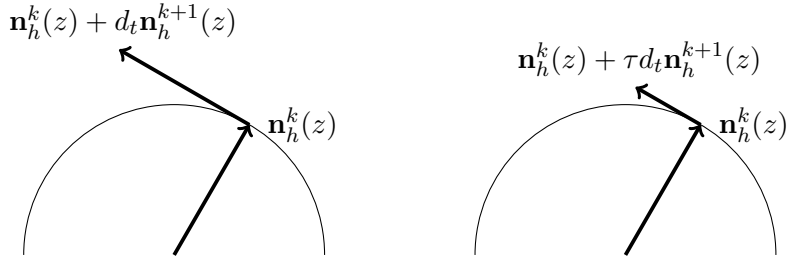
\begin{figure}[htbp]
\begin{center}
\begin{tikzpicture}
  \coordinate (A) at (0,0);
  \coordinate (B) at (8,0);
  \coordinate (C) at ($(A)!.5!(B)$);
  \coordinate (D) at ($(A)!.5!(C)$);
  \coordinate (E) at ($(B)!.5!(C)$);
  \coordinate[label=right:$\;\vn_h^k(z)$] (F) at (3,1.732);
  \coordinate[label=right:$\;\vn_h^k(z)$] (G) at (9,1.732);
  \coordinate[label=above:$\;\vn_h^k(z)+d_t\vn^{k+1}_h(z)$] (H) at (1.268,2.732);
  \coordinate[label=above:$\;\vn_h^k(z)+\tau d_t\vn^{k+1}_h(z)$] (I) at (8.134,2.232);
  \draw[->, line width=0.5mm] (2,0)-- (F);
  \draw[->, line width=0.5mm] (8,0)-- (G);
  \draw[->, line width=0.5mm] (F)--(H);
  \draw[->, line width=0.5mm] (G)--(I);
  \draw (A) let \p1 = ($(C) - (D)$), \n1={veclen(\x1,\y1)} in arc[start angle=180, end angle=0, radius=\n1];
  \draw (E) let \p1 = ($(C) - (E)$), \n1={veclen(\x1,\y1)} in arc[start angle=180, end angle=0, radius=\n1];
\end{tikzpicture}
\end{center}
\caption[Projection-free gradient flow idea]{By searching in tangent directions and damping with parameter $\tau$ yields $|\vn^{k}_h(z)+\tau d_t\vn^{k+1}_h(z)|=|\vn^k_h(z)|^2+\tau^2|d_t\vn^{k+1}_h(z)|^2$.}\label{fig:bartels-idea}
\end{figure}

The resulting gradient flow algorithm is below.

\RestyleAlgo{boxruled}
\begin{algorithm}[H]
\caption{Projection-free gradient flow}\label{alg:grad-flow}
\KwData{Triangulation $\Th$ with meshsize $h$, pseudotime-step $\tau$, stopping tolerance $\veps$, and initial guess $\vn^{0}_h\in \mathbb{V}_h$}
\KwResult{Approximate discrete local minimizer $\vn^*_{h,\tau,\veps}$}
$k\gets0$\\
\While{$E^{k-1}-E^{k}\geq \tau\veps$}{
Compute increment $d_t\vn^{k+1}_h\in \mathbb{T}(\vn^{k}_h)$ to solve \eqref{eq:grad-flow-eq}\\
Update: $\vn^{k+1}_h = \vn^{k}_h+\tau d_t\vn^{k+1}_h$
}
\end{algorithm}

\smallskip
We see the following property of Algorithm \ref{alg:grad-flow} immediately from Figure \ref{fig:bartels-idea}.
\begin{remark}[lower bound on $|\vn_h^{k}(z)|^2$]\label{rmk:lower-bound-length}
\rm
Given $z\in \mathcal{N}_h$, we always have $|\vn_h^{k}(z)|^2\geq1$ if $|\vn_h^{0}(z)|^2=1$. This is because $d_t\vn_h^{k}\in \mathbb{T}(\vn^{k}_h)$, and
\[
|\vn_h^{k+1}(z)|^2 = |\vn_h^{k}(z)|^2 +\tau^2|d_t\vn_h^{k}(z)|^2\geq |\vn_h^{k}(z)|^2
\]
Applying an induction argument yields $|\vn_h^{k}(z)|^2\geq |\vn_h^{0}(z)|^2\geq1$.
\end{remark}

\subsection{Properties of the gradient flow}

Algorithm \ref{alg:grad-flow} (projection-free gradient flow) has a few desirable properties. The most important property is the following energy stability.

\begin{theorem}[energy stability and $L^\infty$ control of constraint]\label{thm:energy_decrease}
  Let $\vn^{0}_h\in \mathbb{V}_h$ be such that $|\vn^{0}_h(z)|^2=1$ for all $z\in \mathcal{N}_h$. There is a constant $0<\Cstab\leq1$ which may depend on $E[\vn^{0}_h], c_{inv},$ and $c_i$ for $i=0,1,2,3$ such that if $\tau h^{-1}\leq \Cstab$ then, for all $k$
\begin{equation}\label{eq:energy-ineq}
E[\vn^{k+1}_h] +\frac{\tau}{2} \sum_{\ell=1}^k\Vert \nabla d_t\vn^{\ell+1}_h\Vert^2 \leq E[\vn^{0}_h],
\end{equation}
and for all $z\in \mathcal{N}_h$
\begin{equation}\label{eq:pointwise-ineq}
  \left||\vn^{k+1}_h(z)|^2-1\right| \leq 4c_{inv}^2 \tau h^{-1} E[\vn^{0}_h],
\end{equation}
where $c_{inv}$ is the constant from Lemma \ref{lem:discrete-sob} (discrete Sobolev inequality).
\end{theorem}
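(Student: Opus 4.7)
The plan is to establish \eqref{eq:energy-ineq} and \eqref{eq:pointwise-ineq} simultaneously by induction on $k$, since the two estimates are coupled: the energy decrease requires a uniform $L^\infty$ bound on $\vn^k_h$ to control the quartic contributions in $E_2$, and this bound comes precisely from \eqref{eq:pointwise-ineq}. First I would test \eqref{eq:grad-flow-eq} with $\vv_h = d_t\vn^{k+1}_h\in\mathbb{T}(\vn^k_h)$. Exploiting the quadratic identity
\[
E_1[\vn^{k+1}_h] - E_1[\vn^k_h] = \tau\,\tfrac{\delta E_1[\vn^k_h;d_t\vn^{k+1}_h]}{\delta\vn} + \tau^2 E_1[d_t\vn^{k+1}_h]
\]
together with the Taylor expansion
\[
E_2[\vn^{k+1}_h] - E_2[\vn^k_h] = \tau\,\tfrac{\delta E_2[\vn^k_h;d_t\vn^{k+1}_h]}{\delta\vn} + R_{k+1},
\]
in which $R_{k+1}$ collects the second, third, and fourth order terms in $\tau d_t\vn^{k+1}_h$, substitution yields the identity
\[
E[\vn^{k+1}_h] - E[\vn^k_h] + \tau\bigl(1+\tfrac{c_0\tau}{2}\bigr)\|\nabla d_t\vn^{k+1}_h\|^2 + \tfrac{c_1\tau^2}{2}\|\divrg d_t\vn^{k+1}_h\|^2 = -R_{k+1}.
\]

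The crux is to estimate $R_{k+1}$. Since $E_2$ is quartic in $\vn$ through the bilinear forms $\va\cdot\curl\vb+\vb\cdot\curl\va$ and $\va\times\curl\vb+\vb\times\curl\va$, the remainder is a finite, explicit sum of terms, each carrying at most two $\curl$'s and at least two factors of $\tau d_t\vn^{k+1}_h$. I would bound each such term by H\"older, placing the $d_t\vn^{k+1}_h$ factors in $L^\infty$ via Lemma \ref{lem:discrete-sob} (applicable since $d_t\vn^{k+1}_h\in\mathbb{V}_{h,0}$), so that $\|d_t\vn^{k+1}_h\|_{L^\infty}\le c_{inv}h^{-1/2}\|\nabla d_t\vn^{k+1}_h\|$, and placing the $\vn^k_h$ factors in $L^\infty$ or $L^2$ via the inductive $L^\infty$ control on $\vn^k_h$ and the bound $\|\nabla\vn^k_h\|^2\le 2c_0^{-1}E[\vn^0_h]$ from Lemma \ref{lem:equicoercivity-discrete} and the inductive energy estimate. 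Collecting terms gives
\[
|R_{k+1}| \le C\,\tau^2\|\nabla d_t\vn^{k+1}_h\|^2\bigl[1 + \tau h^{-1} + (\tau h^{-1})^2\bigr],
\]
with $C$ depending on $c_{inv}$, $c_2$, $c_3$, and $E[\vn^0_h]$. The main technical obstacle lies precisely in this term-by-term bookkeeping of $R_{k+1}$ and in verifying that each contribution admits a factor of $\tau h^{-1}$ suitable for $\Cstab$-absorption.

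To close the induction I would choose $\Cstab\le 1$ so that $4c_{inv}^2\Cstab E[\vn^0_h]\le 1$; by \eqref{eq:pointwise-ineq} at step $k$ together with the convex-combination representation of the piecewise-linear $\vn^k_h$ in terms of its nodal values, this ensures $\|\vn^k_h\|_{L^\infty}^2\le 2$. Shrinking $\Cstab$ further and exploiting $\tau\le\Cstab h\le\Cstab\,\diam(\Omega)$ to absorb one power of $\tau$, one arranges $|R_{k+1}|\le\frac{\tau}{2}\|\nabla d_t\vn^{k+1}_h\|^2$; telescoping the resulting inequality yields \eqref{eq:energy-ineq} at step $k+1$. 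Finally for \eqref{eq:pointwise-ineq}, iterating the tangential identity $|\vn^{\ell+1}_h(z)|^2 = |\vn^\ell_h(z)|^2 + \tau^2|d_t\vn^{\ell+1}_h(z)|^2$ from Remark \ref{rmk:lower-bound-length} (which also delivers $|\vn^{k+1}_h(z)|^2\ge 1$) and applying Lemma \ref{lem:discrete-sob} elementwise gives
\[
|\vn^{k+1}_h(z)|^2 - 1 = \tau^2\sum_{\ell=0}^{k}|d_t\vn^{\ell+1}_h(z)|^2 \le c_{inv}^2\tau h^{-1}\Bigl(\tau\sum_{\ell=0}^k\|\nabla d_t\vn^{\ell+1}_h\|^2\Bigr) \le 4c_{inv}^2\tau h^{-1}E[\vn^0_h],
\]
invoking \eqref{eq:energy-ineq} at step $k+1$ in the last inequality. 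This closes the induction and establishes both claimed estimates.
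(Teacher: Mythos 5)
Your overall architecture — simultaneous induction on the energy inequality and the nodal constraint bound, testing \eqref{eq:grad-flow-eq} with $d_t\vn^{k+1}_h$, using the quadratic identity for $E_1$ and an exact expansion of the quartic $E_2$, then telescoping and closing via the orthogonality identity $|\vn^{k+1}_h(z)|^2 = |\vn^k_h(z)|^2 + \tau^2|d_t\vn^{k+1}_h(z)|^2$ — matches the paper's proof. However, there is a genuine gap in your treatment of the remainder $R_{k+1}$.

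The claimed bound $|R_{k+1}|\le C\tau^2\|\nabla d_t\vn^{k+1}_h\|^2[1+\tau h^{-1}+(\tau h^{-1})^2]$ is not achievable by the term-by-term H\"older/inverse-inequality argument you describe, because the Taylor remainder of the quartic $E_2$ contains terms that are \emph{cubic and quartic} in $\tau d_t\vn^{k+1}_h$. Concretely, writing $\vw = \tau d_t\vn^{k+1}_h$, one of the quartic contributions is $\int_\Omega |\vw\cdot\curl\vw|^2$, bounded by $\|\vw\|_{L^\infty}^2\|\curl\vw\|^2 \lesssim \tau^4 c_{inv}^2 h^{-1}\|\nabla d_t\vn^{k+1}_h\|^4$; this cannot be folded into $\tau^2\|\nabla d_t\vn^{k+1}_h\|^2\,[\cdots]$ with brackets depending only on $\tau h^{-1}$, because it carries an extra $\tau^2 h^{-1}\|\nabla d_t\vn^{k+1}_h\|^2$ that is not controlled a priori. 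The analogous cubic term $\int_\Omega (\vn^k_h\cdot\curl\vw)(\vw\cdot\curl\vw)$ has the same defect. To absorb these into $\frac{\tau}{2}\|\nabla d_t\vn^{k+1}_h\|^2$ one needs an \emph{a priori bound of the form $\tau\|\nabla d_t\vn^{k+1}_h\|^2\lesssim 1$}, which your proposal never establishes.

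The paper handles this in two ways. First, it derives precisely such an a priori bound in a preliminary step, by testing \eqref{eq:grad-flow-eq} with $\tau d_t\vn^{k+1}_h$, applying Cauchy--Schwarz and Young's inequality (rather than the exact quartic expansion) to obtain $\frac{\tau}{2}\|\nabla d_t\vn^{k+1}_h\|^2 + \frac{c_0}{2}\|\nabla\vn^{k+1}_h\|^2 \le c''$, and then bootstrapping this into an $L^\infty$ bound on $\vn^{k+1}_h$ itself. Second, when expanding $E_2$, the paper groups the quadratic, cubic and quartic pieces as a single square $\|d_t\vn^{k+1}_h\cdot\curl\vn^k_h + \vn^{k+1}_h\cdot\curl d_t\vn^{k+1}_h\|^2$, which is only quadratic in $d_t\vn^{k+1}_h$ but now involves $\vn^{k+1}_h$; the a priori $L^\infty$ bound on $\vn^{k+1}_h$ is exactly what makes this term absorbable. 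Without either the a priori $\|\nabla d_t\vn^{k+1}_h\|$ estimate or the algebraic regrouping, the cubic and quartic remainder pieces are uncontrollable and the induction cannot close. Your remark that ``the main technical obstacle lies in term-by-term bookkeeping'' flags the issue but does not resolve it; you need the preliminary estimate on $\|\nabla d_t\vn^{k+1}_h\|$ (and $\|\vn^{k+1}_h\|_{L^\infty}$) before the energy identity can be closed.
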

\begin{proof}
For simplicity of presentation, we distinguish two cases depending on whether $c_1=c_3=0$ or not. We split the proof in several steps.

1. {\it Induction hypothesis}: We assume for $k\geq0$ that
\begin{gather}
E[\vn^{k}_h] +\frac{\tau}{2} \Vert \nabla d_t\vn^{k}_h\Vert^2 \leq E[\vn^{k-1}_h] \label{eq:inductive-hyp-1},\\
0\leq |\vn^{k}_h(z)|^2-1 \leq 4c_{inv}^2\tau h^{-1}E[\vn_h^{0}] \label{eq:inductive-hyp-2},
\end{gather}
with $\vn^{-1}_h = \vn^{0}_h$; this is trivially satisfied for $k=0$. Letting $c_1=c_3=0$ and testing \eqref{eq:grad-flow-eq} with $\tau d_t\vn^{k+1}_h$ yields
\begin{align*}
(\tau+c_0\tau^2)\Vert \nabla d_t\vn^{k+1}_h\Vert^2  = &-c_0\tau(\nabla\vn^{k}_h,\nabla d_t\vn^{k+1}_h) \\
&-c_2\tau(\vn^{k}_h\cdot\curl \vn^{k}_h, \vn^{k}_h\cdot\curl d_t\vn^{k+1}_h +d_t\vn^{k+1}_h\cdot\curl \vn^{k}_h).
\end{align*}
We next exploit this relation to show that \eqref{eq:inductive-hyp-1} and \eqref{eq:inductive-hyp-2} hold for $k+1$. It is clear that adding \eqref{eq:inductive-hyp-1} over $k$, telescopic cancellation yields the desired energy estimate.

\medskip
2. {\it Bound on $\Vert\nabla\vn_h^{k+1}\Vert$}: Recall that $\tau d_t\vn^{k+1}_h= \vn^{k+1}_h - \vn^{k}_h$. By using the equality $(\mathbf{b},\mathbf{b-a}) = \frac{1}{2}(\Vert \mathbf{b}\Vert^2 -\Vert \mathbf{a}\Vert^2 +\Vert \mathbf{b-a}\Vert^2)$, we have
\begin{equation*}
-c_0\tau(\nabla\vn^{k}_h,\nabla d_t\vn^{k+1}_h) = \frac{c_0}{2} \Vert \nabla\vn^{k}_h\Vert^2 -\frac{c_0}{2} \Vert \nabla\vn^{k+1}_h\Vert^2 +\frac{c_0\tau^2}{2} \Vert \nabla d_t\vn^{k+1}_h\Vert^2.
\end{equation*}
Inserting this into the original equation and rearranging, we have
\begin{equation}\label{eq:quadratic-energy}
\Big(\tau+c_0\frac{\tau^2}{2}\Big)\Vert \nabla d_t\vn^{k+1}_h\Vert^2+\frac{c_0}{2} \Vert \nabla\vn^{k+1}_h\Vert^2 = \frac{c_0}{2} \Vert \nabla\vn^{k}_h\Vert^2+I, 
\end{equation}
where
\[
I = -c_2\tau \big(\vn^{k}_h\cdot\curl \vn^{k}_h, \vn^{k}_h\cdot\curl d_t\vn^{k+1}_h +d_t\vn^{k+1}_h\cdot\curl \vn^{k}_h\big) .
\]
Applying Cauchy-Schwarz and triangle inequalities to $I$ yields
\begin{align*}
  |I| &\leq \tau c_2\Vert\vn^{k}_h\cdot\curl \vn^{k}_h\Vert\; \left(\Vert\vn^{k}_h\cdot\curl d_t\vn^{k+1}_h\Vert +\Vert d_t\vn^{k+1}_h\cdot\curl \vn^{k}_h\Vert\right)
  \\ &
  \le \tau c_2\Vert\vn^{k}_h\cdot\curl \vn^{k}_h\Vert\;
  \left(\Vert\vn^{k}_h\Vert_{L^\infty}\Vert\nabla d_t\vn^{k+1}_h\Vert +\Vert d_t\vn^{k+1}_h\Vert_{L^\infty} \Vert\nabla \vn^{k}_h\Vert\right).
\end{align*}
Note that $c_2\Vert\vn^{k}_h\cdot\curl \vn^{k}_h\Vert^2\leq 2E[\vn^{0}_h]$ by the inductive hypothesis \eqref{eq:inductive-hyp-1}. Hence,
\begin{align*}
|I| \leq\tau\sqrt{2c_2E[\vn^{0}_h]}\left(\Vert\vn^{k}_h\Vert_{L^\infty}\Vert\nabla d_t\vn^{k+1}_h\Vert +\Vert d_t\vn^{k+1}_h\Vert_{L^\infty} \Vert\nabla \vn^{k}_h\Vert\right).
\end{align*}
The induction hypotheses \eqref{eq:inductive-hyp-1} and \eqref{eq:inductive-hyp-2} imply $\Vert\vn^{k}_h\Vert_{L^\infty} \leq 1+4c_{inv}^2\tau h^{-1}E[\vn^{0}_h]\leq 1+4c_{inv}^2E[\vn^{0}_h]$ as well as $\Vert\nabla \vn^{k}_h\Vert \leq \sqrt{\frac{2}{c_0}E[\vn^{0}_h]}$. Incorporating these expressions into the above estimate and utilizing Lemma \ref{lem:discrete-sob} (discrete Sobolev inequality) yields
\begin{align*}
|I| &\leq \tau\sqrt{2c_2E[\vn^{0}_h]}\left(\big(1+4c_{inv}^2E[\vn^{0}_h]\big)+c_{inv}h^{-1/2} \sqrt{\frac{2}{c_0}E[\vn^{0}_h]} \right)\Vert\nabla d_t\vn^{k+1}_h\Vert\\
&\leq c' \tau h^{-1/2}\Vert\nabla d_t\vn^{k+1}_h\Vert, 
\end{align*}
where $c'$ depends on $E[\vn^{0}_h], c_0, c_2$ and $c_{inv}$. We then apply Young's inequality to further estimate
\[
|I| \leq \frac{{c'}^2}{2}+\frac{\tau^2h^{-1}}{2}\Vert\nabla d_t\vn^{k+1}_h\Vert^2.
\]
Since $\tau h^{-1}\leq 1$, we absorb the last term into the left hand side of \eqref{eq:quadratic-energy} and obtain again using the inductive hypothesis \eqref{eq:inductive-hyp-1}
\begin{equation}\label{eq:int-est-grad}
\frac{\tau}{2}\Vert \nabla d_t\vn^{k+1}_h\Vert^2+\frac{c_0}{2} \Vert \nabla\vn^{k+1}_h\Vert^2\leq\frac{c_0}{2} \Vert \nabla\vn^{k}_h\Vert^2+ \frac{{c'}^2}{2} \leq E[\vn^{0}_h]+ \frac{{c'}^2}{2} \leq c'',
\end{equation}
where $c''$ only depends on $E[\vn^{0}_h]$ and $c'$, and so only depends on $E[\vn^{0}_h], c_0, c_2$ and $c_{inv}$.

\medskip
{\it 3. Intermediate estimate for $\Vert\vn^{k+1}_h\Vert_{L^\infty}$}: Property $d_t\vn^{k+1}_h\in \mathbb{T}_h(\vn^{k}_h)$ implies $d_t\vn^{k+1}_h(z)\cdot\vn^{k}_h(z) = 0$ at nodes $z\in\mathcal{N}_h$, whence 
\begin{equation*}
| \vn^{k+1}_h(z)|^2 = | \vn^{k}_h(z)+\tau d_t\vn^{k+1}_h(z)|^2 = | \vn^{k}_h(z)|^2+\tau^2|d_t\vn^{k+1}_h(z)|^2.
\end{equation*}
By the inductive hypothesis \eqref{eq:inductive-hyp-2} and the assumption $\tau h^{-1}\leq 1$, we deduce $\Vert\vn^{k}_h\Vert_{L^\infty}^2 \leq 1+4c_{inv}^2E[\vn^{0}_h]$ and
\[
| \vn^{k+1}_h(z)|^2 \leq  \left(1+4c_{inv}^2E[\vn^{0}_h]\right)^2+\tau^2\Vert d_t\vn^{k+1}_h\Vert_{L^\infty}^2.
\]
Applying again Lemma \ref{lem:discrete-sob} (discrete Sobolev inequality) and the assumption $\tau h^{-1}\leq 1$, we now deduce an estimate on $| \vn^{k+1}_h(z)|^2$, namely
\begin{equation}\label{eq:int-est}
\begin{aligned}
  | \vn^{k+1}_h(z)|^2 &\leq \left(1+4c_{inv}^2E[\vn^{0}_h]\right)^2+ \tau^2h^{-1}c_{inv}^2\Vert\nabla d_t\vn^{k+1}_h\Vert^2
  \\
& \leq \left(1+4c_{inv}^2E[\vn^{0}_h]\right)^2 + 2\tau h^{-1}c_{inv}^2c'' \leq c''', 
\end{aligned}
\end{equation}
where $c'''$ only depends on $E[\vn^{0}_h], c_{inv},$ and $c''$. This is the desired intermediate estimate for $\Vert \vn^{k+1}_h\Vert_{L^\infty}^2$ but is not quite \eqref{eq:inductive-hyp-2} for $k+1$.

\medskip
{\it 4. Energy estimate}: To prove the asserted energy estimate \eqref{eq:inductive-hyp-1}, we rewrite $I$ in \eqref{eq:quadratic-energy}. To this end, let
\[
\va_k = \vn^{k}_h,\quad \vb_k = \curl \vn^{k}_h
\]
and note that
\begin{align*}
\va_{k+1}\cdot\vb_{k+1} - \va_{k}\cdot\vb_{k} =& \, (\va_{k+1} - \va_{k})\cdot\vb_k \\
&+ \va_k\cdot(\vb_{k+1} - \vb_{k})\\
&+(\va_{k+1} - \va_{k})\cdot(\vb_{k+1} - \vb_{k}).
\end{align*}
Squaring and rearranging terms, we end up with
\begin{align*}
|\va_{k+1}\cdot\vb_{k+1}|^2  =& \, |\va_{k}\cdot\vb_{k}|^2 \\
&+ 2\va_{k}\cdot\vb_{k}\left[(\va_{k+1} - \va_{k})\cdot\vb_k+ \va_k\cdot(\vb_{k+1} - \vb_{k})\right]\\
&+ 2\va_{k}\cdot\vb_{k}(\va_{k+1} - \va_{k})\cdot(\vb_{k+1} - \vb_{k})\\
&+ \left|(\va_{k+1} - \va_{k})\cdot\vb_k+\va_{k+1} \cdot(\vb_{k+1} - \vb_{k})\right|^2.
\end{align*}
In view of the definition of $\va_k$ and $\vb_k$, multiplying by $\frac{c_2}{2}$ and integrating over $\Omega$
yields
\begin{equation}\label{eq:twist-energy}
\begin{aligned}
\frac{c_2}{2}\norm{\vn_h^{k+1}\cdot \curl \vn_h^{k+1} }^2 = & \, \frac{c_2}{2}\norm{\vn_h^{k}\cdot \curl \vn_h^{k}}^2 \\
&+c_2\tau (\vn_h^{k}\cdot \curl \vn_h^{k}, d_t\vn_h^{k}\cdot \curl \vn_h^{k}+\vn_h^{k}\cdot \curl d_t\vn_h^{k+1}) \\
&+ c_2\tau^2(\vn_h^{k}\cdot \curl \vn_h^{k}, d_t\vn_h^{k+1}\cdot \curl d_t\vn_h^{k+1}) \\
&+\frac{c_2\tau^2}{2}\norm{ d_t\vn_h^{k+1}\cdot \curl \vn_h^{k}+\vn_h^{k+1}\cdot \curl d_t\vn_h^{k+1}}^2 .
\end{aligned}
\end{equation}
Adding \eqref{eq:quadratic-energy} and \eqref{eq:twist-energy} and canceling the order $\tau$ term with $I$, we obtain
\begin{equation} \label{eq:int-est-energy}
\Big(\tau+c_0\frac{\tau^2}{2}\Big)\Vert \nabla d_t\vn^{k+1}_h\Vert^2+ E[\vn_h^{k+1}]
= E[\vn_h^{k}]+ II +III,
\end{equation}
where
\begin{align*}
II&= c_2\tau^2(\vn_h^{k}\cdot \curl \vn_h^{k}, d_t\vn_h^{k+1}\cdot \curl d_t\vn_h^{k+1}) \nonumber\\
III&=\frac{c_2\tau^2}{2}\norm{ d_t\vn_h^{k+1}\cdot \curl \vn_h^{k}+\vn_h^{k+1}\cdot \curl d_t\vn_h^{k+1}}^2.
\end{align*}
To derive the energy inequality, we will estimate $II$ and $III$ separately. We first estimate $II$ by Cauchy-Schwarz and the inductive hypothesis \eqref{eq:inductive-hyp-1}:
\[
|II| \leq c_2\tau^2\norm{\vn_h^{k}\cdot \curl \vn_h^{k}}\norm{d_t\vn_h^{k+1}\cdot \curl d_t\vn_h^{k+1}}\leq \tau^2\sqrt{2c_2E[\vn_h^{k}]}\norm{d_t\vn_h^{k+1}}_{L^\infty} \norm{\curl d_t\vn_h^{k+1}}.
\]
We then apply H\"{o}lder inequality and Lemma \ref{lem:discrete-sob} (discrete Sobolev inequality) to estimate $\Vert d_t\vn_h^{k+1}\Vert_{L^\infty} \leq c_{inv} h^{-1/2}\Vert\nabla d_t\vn_h^{k+1}\Vert$ whence
\begin{align*}
|II| \leq
\sqrt{2c_2E[\vn^{0}_h]}c_{inv}\tau^2h^{-1/2}\norm{\nabla d_t\vn_h^{k+1}}^2 \leq c^{iv}\tau^2h^{-1}\norm{\nabla d_t\vn_h^{k+1}}^2,
\end{align*}
where $c^{iv}$ depends on $c_2, E[\vn^{0}_h],$  and $c_{inv}$.
Moreover, we estimate $III$ as follows using the inequality $|\va+\vb|^2\leq 2|\va|^2+2|\vb|^2$ and H\"{o}lder inequality:
\begin{align*}
|III| &\leq c_2\tau^2\left(\norm{ d_t\vn_h^{k+1}\cdot \curl \vn_h^{k}}^2+\norm{\vn_h^{k+1}\cdot \curl d_t\vn_h^{k+1}}^2\right)\\
&\leq c_2\tau^2\left(\norm{ d_t\vn_h^{k+1}}_{L^\infty}^2\norm{ \nabla \vn_h^{k}}^2+\norm{\vn_h^{k+1}}_{L^\infty}^2\norm{ \nabla d_t\vn_h^{k+1}}^2\right).
\end{align*}
Combining the energy decrease from the inductive hypothesis \eqref{eq:inductive-hyp-1}, the intermediate estimate on $\Vert \vn_h^{k+1}\Vert^2_{L^\infty}$ from \eqref{eq:int-est}, and Lemma \ref{lem:discrete-sob} (discrete Sobolev inequality) helps us further bound $III$ as follows:
\begin{align*}
|III|&\leq c_2\tau^2\left(\norm{ d_t\vn_h^{k+1}}_{L^\infty}^2\frac{2}{c_0}E[\vn^{0}_h] + c''' \norm{ \nabla d_t\vn_h^{k+1}}^2\right)\\
&\leq c_2\tau^2\left(c_{inv}^2h^{-1}\frac{2}{c_0}E[\vn^{0}_h] + c''' \right)\norm{ \nabla d_t\vn_h^{k+1}}^2
\leq c^{v} \tau^2 h^{-1} \norm{ \nabla d_t\vn_h^{k+1}}^2,
\end{align*}
where $c^{v}$ depends on $c_2,c_{inv}, c'''$ and $E[\vn^{0}_h]$.
Inserting the estimates of $II$ and $III$ into \eqref{eq:int-est-energy} yields the following inequality
\begin{align*}
\left(\tau+c_0\frac{\tau^2}{2}\right)\Vert \nabla d_t\vn^{k+1}_h\Vert^2+ E[\vn_h^{k+1}]
\leq E[\vn_h^{k}] + c^{vi} \tau^2 h^{-1} \norm{ \nabla d_t\vn_h^{k+1}}^2,
\end{align*}
where $c^{vi}$ depends only on $c_{inv}, E[\vn^{0}_h],c_0,c_2$. We now pick $\tau$ so that $\tau h^{-1}\leq \Cstab :=\min\{1,\frac{1}{2c^{vi}}\}$ to obtain the desired energy inequality \eqref{eq:inductive-hyp-1} for $k+1$:
\begin{align*}
\frac{\tau}{2}\Vert \nabla d_t\vn^{k+1}_h\Vert^2+ E[\vn_h^{k+1}]\leq E[\vn_h^{k}].
\end{align*}

\medskip
{\it 5. Constraint for $\vn_h^{k+1}$}: Recalling the orthogonality property
$$|\vn_h^{k+1}(z)|^2 = |\vn_h^{k}(z)|^2+\tau^2|d_t\vn_h^{k+1}(z)|^2,$$
and applying again Lemma \ref{lem:discrete-sob} (discrete Sobolev inequality) gives
\begin{align*}
0 \le  |\vn_h^{k+1}(z)|^2 - |\vn_h^{k}(z)|^2 = \tau^2h^{-1} c_{inv}^2 \norm{\nabla d_t\vn_h^{k+1}}^2
\leq 2c_{inv}^2\tau h^{-1}\left(E[\vn_h^{k}] - E[\vn_h^{k+1}]\right).
\end{align*}
Since $|\vn_h^{k+1}(z)|^2 \geq |\vn_h^{k}(z)|^2\geq 1$,
summing over $k$ and using telescoping cancellation yields
$$
0 \le |\vn_h^{k+1}(z)|^2 - 1 \leq 2c_{inv}^2\tau h^{-1}\left(E[\vn_h^{0}] - E[\vn_h^{k+1}]\right) \leq 2c_{inv}^2\tau h^{-1}E[\vn_h^{0}],
$$
because $|\vn^{0}_h(z)| = 1$ and $E[\vn_h^{k+1}]\geq0$. These two inequalities are the desired nodal length violation in \eqref{eq:inductive-hyp-2} for $k+1$, and complete the inductive argument provided $c_1=c_3=0$.

\medskip
{\it 6. Case $c_1\ne0, c_3\ne0$}: We now verify \eqref{eq:inductive-hyp-1} and \eqref{eq:inductive-hyp-2},
  first for $c_1\ne0$ and next for $c_3\ne0$. Since the splay term $c_1\ne0$ is dealt with implicitly, we immediately get the energy decrease of this term using similar quadratic identities.
If $c_3\neq0$, instead, there are three steps that need to be checked. First, one would need to ensure that the intermediate estimates in \eqref{eq:int-est-grad} and \eqref{eq:int-est} remains valid. This is indeed the case because an application of the preceding techniques shows there is a larger constant $c'$ such that
\[ c_3\tau \left|\left(\vn^{k}_h\times\curl \vn^{k}_h, \vn^{k}_h\times\curl d_t\vn^{k+1}_h +d_t\vn^{k+1}_h\times\curl \vn^{k}_h\right)\right| \leq c'\tau h^{-1/2}\Vert \nabla d_t\vn_h^{k+1}\Vert,
\]
and using Young's inequality yields an estimate similar to \eqref{eq:int-est-grad}. Then remaining intermediate estimate \eqref{eq:int-est} readily follows.

The next key step would be to achieve a version of \eqref{eq:twist-energy}. Since the quartic structure of the bend term is similar to the twist term in \eqref{eq:twist-energy}, with dot products replaced by cross products, the desired energy inequality emerges from the same arguments developed to estimate the remainder terms in \eqref{eq:int-est-energy}, possibly with a smaller constant $\Cstab$.
\end{proof}

An interesting observation is that once energy stability is achieved, one does not need to take $\tau h^{-1}\to 0$ to recover control of the unit length constraint violation. In fact, if we measure the constraint violation in a weaker norm, then taking $\tau\to0$ would recover the unit length constraint as long as $\tau h^{-1}\leq \Cstab$ where $\Cstab$ is the constant from Theorem \ref{thm:energy_decrease} (energy stability and $L^\infty$ control of constraint). We explore this next.
\begin{cor}[control of $L^1$ violation of constraint]\label{cor:L1-constraint}
Let $\vn^{0}_h\in \mathbb{V}_h$ such that $|\vn^{0}_h(z)|^2=1$ for all $z\in \mathcal{N}_h$. Suppose $\tau h^{-1}\leq \Cstab$, where $\Cstab$ is the constant from Theorem \ref{thm:energy_decrease} (energy stability and $L^\infty$ control of constraint). Then
$$
\norm{I_h \big[|\vn_h^{k+1}|^2-1 \big]}_{L^1}\lesssim \tau E[\vn_h^{0}].
$$
\end{cor}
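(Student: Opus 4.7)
The plan is to exploit the pointwise orthogonality $d_t\vn_h^{\ell+1}(z) \cdot \vn_h^{\ell}(z) = 0$ for $z \in \mathcal{N}_h$ built into Algorithm \ref{alg:grad-flow}, telescope in the iteration index $\ell$, and then trade the extra factor of $\tau$ against the energy estimate \eqref{eq:energy-ineq} via a Poincar\'e inequality on $\mathbb{V}_{h,0}$. This is exactly the mechanism that will produce $\tau$ rather than $\tau h^{-1}$ on the right-hand side.

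First, because $d_t\vn_h^{\ell+1}\in \mathbb{T}_h(\vn_h^{\ell})$, the identity already used in the proof of Theorem \ref{thm:energy_decrease} gives $|\vn_h^{\ell+1}(z)|^2 = |\vn_h^{\ell}(z)|^2 + \tau^2|d_t\vn_h^{\ell+1}(z)|^2$ at every node $z\in\mathcal{N}_h$. Since $|\vn_h^{0}(z)|^2 = 1$, telescoping from $\ell=0$ to $\ell=k$ yields
\begin{equation*}
|\vn_h^{k+1}(z)|^2 - 1 = \tau^2 \sum_{\ell=0}^{k} |d_t\vn_h^{\ell+1}(z)|^2 \ge 0 \qquad \forall z\in \mathcal{N}_h.
\end{equation*}
As both sides are nonnegative at each node and $I_h$ is the Lagrange interpolant onto $\mathbb{V}_h$, this nodal equality lifts to the identity
\begin{equation*}
I_h\bigl[|\vn_h^{k+1}|^2 - 1\bigr] = \tau^2 \sum_{\ell=0}^{k} I_h\bigl[|d_t\vn_h^{\ell+1}|^2\bigr] \qquad \text{in } \mathbb{Q}_h,
\end{equation*}
with a nonnegative integrand. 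Taking the $L^1$ norm of a nonnegative piecewise linear function and using the standard equivalence $\|I_h[|v_h|^2]\|_{L^1(\Omega)}\sim \|v_h\|_{L^2}^2$ valid for any $v_h\in \mathbb{V}_h$ on shape-regular meshes (both quantities reduce, up to element-wise mass-matrix constants, to $\sum_z |\omega_z| |v_h(z)|^2$), I would estimate
\begin{equation*}
\bigl\|I_h[|\vn_h^{k+1}|^2 - 1]\bigr\|_{L^1} \;\lesssim\; \tau^2 \sum_{\ell=0}^{k} \|d_t\vn_h^{\ell+1}\|^2.
\end{equation*}

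Next, since each $d_t\vn_h^{\ell+1}\in \mathbb{V}_{h,0}$ vanishes on $\partial\Omega$, the Poincar\'e inequality gives $\|d_t\vn_h^{\ell+1}\|\lesssim \|\nabla d_t\vn_h^{\ell+1}\|$, so
\begin{equation*}
\bigl\|I_h[|\vn_h^{k+1}|^2 - 1]\bigr\|_{L^1} \;\lesssim\; \tau \cdot \tau \sum_{\ell=0}^{k} \|\nabla d_t\vn_h^{\ell+1}\|^2.
\end{equation*}
Finally, because $\tau h^{-1}\leq \Cstab$, Theorem \ref{thm:energy_decrease} applies and gives $\tau \sum_{\ell=0}^{k}\|\nabla d_t\vn_h^{\ell+1}\|^2 \leq 2 E[\vn_h^{0}]$, which is precisely the factor needed to conclude $\|I_h[|\vn_h^{k+1}|^2 - 1]\|_{L^1}\lesssim \tau E[\vn_h^{0}]$.

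No step is truly a main obstacle: the only thing to get right is the observation that the tangentiality of $d_t\vn_h^{\ell+1}$ at nodes, combined with telescoping, converts a pointwise $\tau^2 h^{-1}$ estimate (via the inverse inequality, as in Theorem \ref{thm:energy_decrease}) into an $L^1$ estimate of order $\tau$, because the extra $\tau$ pairs with $\|\nabla d_t\vn_h^{\ell+1}\|^2$ to invoke energy stability. The equivalence $\|I_h[|v_h|^2]\|_{L^1}\sim \|v_h\|^2$ and the discrete Poincar\'e inequality on $\mathbb{V}_{h,0}$ are routine, but they are what allow the norm $L^1$ to be soft enough to benefit from Theorem \ref{thm:energy_decrease}.
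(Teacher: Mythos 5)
Your proposal is correct and follows essentially the same route as the paper: nodal orthogonality plus telescoping to express $|\vn_h^{k+1}(z)|^2-1$ as $\tau^2\sum_\ell|d_t\vn_h^{\ell+1}(z)|^2$, then passing to $L^1$ via the mass-lumped/$L^2$ norm equivalence (the paper phrases this as the quadrature identity $\frac13\sum_z v_h(z)|\omega_z| = \int_\Omega v_h$), followed by Poincar\'e and the energy stability bound \eqref{eq:energy-ineq}. No meaningful difference.
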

\begin{proof}
Suppose $\tau h^{-1}\leq \Cstab$. In view of the nodal orthogonality property
$$
  |\vn_h^{k+1}(z)|^2 = |\vn_h^{k}(z)|^2+\tau^2|d_t\vn_h^{k+1}(z)|^2,
$$
adding over $k$ and using telescopic cancellation along with $|\vn_h^0(z)|=1$ yields
\[
|\vn_h^{k+1}(z)|^2 - 1 = \tau^2 \sum_{\ell=0}^k|d_t\vn_h^{\ell+1}(z)|^2.
\]
Multiplying by the measure $|\omega_z|$ of the star $\omega_z$, and recalling the quadrature identity
$\frac{1}{3} \sum_{z\in\mathcal{N}_h} v_h(z) |\omega_z| = \int_\Omega v_h$ for all $v_h\in\V_h$, leads to
\[
\norm{I_h \big[|\vn_h^{k}|^2 -1  \big]}_{L^1} = \tau^2\sum_{\ell=0}^k \norm{I_h\big[|d_t\vn_h^{\ell+1}|^2\big]}_{L^1}
\lesssim \tau^2\sum_{\ell=0}^k \norm{d_t\vn_h^{\ell+1}}^2
\]
Applying Poincar\'e inequality $\Vert d_t\vn_h^{k+1}\Vert^2\lesssim \Vert \nabla d_t\vn_h^{k+1}\Vert^2$ in
conjunction with \eqref{eq:energy-ineq}, implies
\[
\norm{I_h \big[|\vn_h^{k+1}|^2 -1  \big]}_{L^1}  \lesssim \tau^2\sum_{\ell=0}^k\Vert \nabla d_t\vn_h^{\ell+1}\Vert^2
\lesssim  \tau E[\vn_h^{0}].
\]
This is the asserted estimate.

\end{proof}

The next two results establish that Algorithm \ref{alg:grad-flow} (projection-free gradient flow) computes a critical point of $ E$ in the discrete admissible set $\mathcal{A}_{\vg,h,\eta}$. They mimic results for harmonic maps \cite[Lemma 3.8.]{bartels2005stability},\cite[Proposition 3.1]{bartels2016projection}.
\begin{cor}[residual estimate]\label{cor:small_residual}
Given $\varepsilon>0$, there is an integer $k_\varepsilon$ such that $ E[\vn_h^{k_\varepsilon - 1}] -  E[\vn_h^{k_\varepsilon}] <\varepsilon\tau$. Moreover, $\vn_h^{k_\varepsilon}$ satisfies
\begin{equation}
\left|\frac{\delta E[\vn_h^{k_\varepsilon}; \vv_h]}{\delta \vn}\right|\leq (1+\tau c_0+\tau c_1)\sqrt{2\varepsilon} \Vert \nabla \vv_h\Vert
\end{equation}
for all $\vv_h\in \mathbb{T}(\vn_h^{k_\varepsilon})$.
\end{cor}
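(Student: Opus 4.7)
The plan is straightforward: exploit the summability of energy decrements encoded in \eqref{eq:energy-ineq} to locate a step where the per-iteration drop in energy is small, then read off the residual bound directly from the variational equation \eqref{eq:grad-flow-eq} satisfied by the increment at that step.

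First, by Lemma \ref{lem:equicoercivity-discrete} the energies $E[\vn_h^k]$ are non-negative, and by \eqref{eq:energy-ineq} the sequence $\{E[\vn_h^k]\}$ is monotone non-increasing; hence it converges and the per-step decrement $E[\vn_h^{k-1}]-E[\vn_h^{k}]$ tends to $0$. Therefore there exists a smallest integer $k_\varepsilon$ with $E[\vn_h^{k_\varepsilon-1}] - E[\vn_h^{k_\varepsilon}] < \tau\varepsilon$, which is precisely the stopping criterion built into Algorithm \ref{alg:grad-flow} (projection-free gradient flow).

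Next, the one-step energy inequality \eqref{eq:inductive-hyp-1} from the proof of Theorem \ref{thm:energy_decrease} gives $\tfrac{\tau}{2}\Vert \nabla d_t\vn_h^{k_\varepsilon}\Vert^2 \le E[\vn_h^{k_\varepsilon - 1}] - E[\vn_h^{k_\varepsilon}] < \tau\varepsilon$, so $\Vert \nabla d_t\vn_h^{k_\varepsilon}\Vert < \sqrt{2\varepsilon}$. Rearranging the variational equation \eqref{eq:grad-flow-eq} (with $k+1=k_\varepsilon$) yields, for every $\vv_h \in \mathbb{T}(\vn_h^{k_\varepsilon - 1})$,
\[
-\frac{\delta E[\vn_h^{k_\varepsilon - 1}; \vv_h]}{\delta \vn} = (1+c_0\tau)\big(\nabla d_t\vn_h^{k_\varepsilon}, \nabla \vv_h\big) + c_1\tau\big(\divrg d_t\vn_h^{k_\varepsilon}, \divrg \vv_h\big).
\]
Cauchy-Schwarz together with $\Vert \divrg \vv_h \Vert \le \Vert \nabla \vv_h \Vert$ then produces the desired residual bound
\[
\left|\frac{\delta E[\vn_h^{k_\varepsilon - 1}; \vv_h]}{\delta \vn}\right| \le (1 + \tau c_0 + \tau c_1)\Vert \nabla d_t\vn_h^{k_\varepsilon}\Vert\,\Vert \nabla \vv_h\Vert \le (1 + \tau c_0 + \tau c_1)\sqrt{2\varepsilon}\, \Vert \nabla \vv_h\Vert.
\]

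No step is genuinely delicate. The only point worth flagging is indexing: the Corollary states the residual at $\vn_h^{k_\varepsilon}$ with test directions in $\mathbb{T}(\vn_h^{k_\varepsilon})$, whereas \eqref{eq:grad-flow-eq} naturally pairs the increment $d_t\vn_h^{k_\varepsilon}$ with the right-hand side evaluated at $\vn_h^{k_\varepsilon - 1}$ and tested in $\mathbb{T}(\vn_h^{k_\varepsilon - 1})$. This is reconciled by a harmless relabeling of the terminal index (equivalently, by interpreting $k_\varepsilon$ in the residual estimate as the index of the configuration at which the variational derivative is evaluated).
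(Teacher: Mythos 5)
Your proof is correct and follows essentially the same route as the paper: show that the nonnegative decrements $E[\vn_h^{k-1}]-E[\vn_h^{k}]$ tend to $0$ (you via monotone convergence of a bounded sequence, the paper via convergence of a telescoping series of nonnegative terms — the same fact), bound $\|\nabla d_t\vn_h^{k_\varepsilon}\|^2 \le 2\varepsilon$ from \eqref{eq:inductive-hyp-1}, and read the residual bound off \eqref{eq:grad-flow-eq} via Cauchy--Schwarz and $\|\divrg\vv_h\|\le\|\nabla\vv_h\|$. Your flag of the one-index mismatch between the stopping criterion ($E[\vn_h^{k_\varepsilon-1}]-E[\vn_h^{k_\varepsilon}]<\varepsilon\tau$) and the configuration at which the residual is naturally evaluated is a legitimate observation; the paper's own proof carries the same harmless off-by-one slippage, and your resolution by relabeling is exactly the right reading.
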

\begin{proof}
The fundamental energy  estimate \eqref{eq:inductive-hyp-1} implies
$ E[\vn^{k-1}_h] -  E[\vn^{k}_h]\geq 0$ along with
\begin{equation*}
0\leq \sum_{k=0}^{K}\left( E[\vn^{k-1}_h] -  E[\vn^{k}_h]\right) =  E[\vn^{0}_h] -  E[\vn^{K}_h]\leq  E[\vn^{0}_h].
\end{equation*} 
Therefore, the series $\sum_{k=1}^{\infty} \big( E[\vn^{k-1}_h] -  E[\vn^{k}_h] \big)$  of non-negative
terms converges, and
\begin{equation*}
\lim_{k\to\infty}\left( E[\vn^{k-1}_h] -  E[\vn^{k}_h]\right) = 0.
\end{equation*}
Hence, there exists a $k_\varepsilon$ such that $ E[\vn^{k_\varepsilon}_h] -  E[\vn^{k_\varepsilon+1}_h]\leq \varepsilon\tau$.
Moreover, \eqref{eq:inductive-hyp-1} yields
\begin{equation*}
  \norm{\nabla d_t \vn^{k_\varepsilon+1}_h}^2\leq \frac{2}{\tau}\left( E[\vn^{k_\varepsilon}_h] -  E[\vn^{k_\varepsilon+1}_h]\right)\leq 2\varepsilon.
\end{equation*}
Using the gradient flow equation in \eqref{eq:grad-flow-eq}, we realize that
\begin{equation*}
\frac{\delta  E[\vn^{k_\varepsilon}_h,\vv_h]}{\delta\vn} = (1+\tau c_0)(\nabla d_t\vn^{k_\varepsilon+1}_h, \nabla \vv_h) +\tau c_1(\divrg d_t\vn^{k_\varepsilon+1}_h, \divrg \vv_h),
\end{equation*}
whence the asserted estimate
\begin{equation*}
\left|\frac{\delta  E[\vn^{k_\varepsilon}_h,\vv_h]}{\delta\vn}\right| \leq (1+\tau c_0+\tau c_1)\norm{ \nabla d_t\vn^{k_\varepsilon+1}_h}\norm{\nabla \vv_h}\leq (1+\tau c_0+\tau c_1)\sqrt{2\varepsilon}\norm{\nabla \vv_h}
\end{equation*}
follows immediately.
\end{proof}

A serious difficulty to prove convergence of $\{\vn_h^{h_\eps}\}_{\eps>0}$ is the fact that the tangent space
  $\mathbb{T}(\vn_h^{k_\eps})$ depends on $\vn_h^{k_\eps}$. This issue is tackled next.

\begin{theorem}[crtical points]\label{T:critical-points}
Let $\varepsilon\to0$ and let $\vn_h^{k_\varepsilon}$ be chosen from Corollary \ref{cor:small_residual} (residual estimate). Firstly, there are cluster points of $\{\vn_h^{k_\varepsilon}\}_{\varepsilon>0}$. Secondly, if $\vn^*_h$ is a cluster point of $\{\vn_h^{k_\varepsilon}\}_{\varepsilon>0}$, then it is a critical point of $ E$ over $\mathbb{V}_h$ in tangential directions, namely
\begin{equation}\label{eq:critical-point}
\frac{\delta E[\vn_h^{*}; \vv_h]}{\delta \vn} = 0
\end{equation}
for all $\vv_h\in \mathbb{T}(\vn_h^*)$.
\end{theorem}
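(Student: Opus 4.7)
The plan is to first extract a cluster point from the sequence $\{\vn_h^{k_\varepsilon}\}_{\varepsilon>0}$ by exploiting the uniform energy bound and the fact that $h$ is fixed, and then to pass to the limit in the residual estimate of Corollary \ref{cor:small_residual}. The main obstacle, as flagged in the paragraph preceding the statement, is that the admissible test functions in that estimate live in $\mathbb{T}(\vn_h^{k_\varepsilon})$, which varies with $\varepsilon$; overcoming this requires a nodal tangential projection that maps any $\vv_h \in \mathbb{T}(\vn_h^*)$ to a close-by element of $\mathbb{T}(\vn_h^{k_\varepsilon})$.

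\textbf{Step 1 (existence of cluster points).} The energy stability \eqref{eq:energy-ineq} yields $E[\vn_h^{k_\varepsilon}] \le E[\vn_h^0]$, and the equicoercivity of Lemma \ref{lem:equicoercivity-discrete} together with the fixed Dirichlet data on $\partial\Omega$ (inherited from $\vn_h^0$, since all increments $d_t\vn_h^{k+1}$ lie in $\mathbb{V}_{h,0}$) bound $\{\vn_h^{k_\varepsilon}\}_{\varepsilon>0}$ uniformly in $H^1(\Omega;\mathbb{R}^3)$. Because $h$ is fixed and $\mathbb{V}_h$ is finite-dimensional, Bolzano--Weierstrass furnishes a subsequence (not relabelled) converging to some $\vn_h^* \in \mathbb{V}_h$ in every norm on $\mathbb{V}_h$.

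\textbf{Step 2 (nodal tangential projection).} Given $\vv_h \in \mathbb{T}(\vn_h^*)$, define $\vv_h^\varepsilon \in \mathbb{V}_{h,0}$ by
\begin{equation*}
\vv_h^\varepsilon(z) := \vv_h(z) - \frac{\vv_h(z)\cdot\vn_h^{k_\varepsilon}(z)}{|\vn_h^{k_\varepsilon}(z)|^2}\,\vn_h^{k_\varepsilon}(z) \quad \text{for interior } z\in\mathcal{N}_h,
\end{equation*}
and $\vv_h^\varepsilon(z)=0$ at boundary nodes; extend linearly over $\mathcal{T}_h$. By Remark \ref{rmk:lower-bound-length} the denominator satisfies $|\vn_h^{k_\varepsilon}(z)|^2 \ge 1$, so the projection is well-defined, and by construction $\vv_h^\varepsilon \in \mathbb{T}(\vn_h^{k_\varepsilon})$. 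Since $\vn_h^{k_\varepsilon}(z) \to \vn_h^*(z)$ for every node and $\vv_h(z)\cdot\vn_h^*(z)=0$, the scalar $\vv_h(z)\cdot\vn_h^{k_\varepsilon}(z) \to 0$ at each node; hence $\vv_h^\varepsilon \to \vv_h$ in $\mathbb{V}_h$ (again, in any norm since $\mathbb{V}_h$ is finite-dimensional), and in particular $\|\nabla\vv_h^\varepsilon\|$ stays bounded.

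\textbf{Step 3 (passage to the limit).} Testing Corollary \ref{cor:small_residual} with $\vv_h^\varepsilon \in \mathbb{T}(\vn_h^{k_\varepsilon})$ gives
\begin{equation*}
\left|\frac{\delta E[\vn_h^{k_\varepsilon};\vv_h^\varepsilon]}{\delta\vn}\right| \le (1+\tau c_0 + \tau c_1)\sqrt{2\varepsilon}\,\|\nabla\vv_h^\varepsilon\| \longrightarrow 0 \quad \text{as } \varepsilon\to 0.
\end{equation*}
On the other hand, $E$ is a polynomial of degree four in $\vn_h$, so the map $(\vn_h,\vv_h)\mapsto \frac{\delta E[\vn_h;\vv_h]}{\delta\vn}$ is continuous on $\mathbb{V}_h\times\mathbb{V}_h$. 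Combined with $\vn_h^{k_\varepsilon}\to\vn_h^*$ and $\vv_h^\varepsilon\to\vv_h$ from Steps 1--2, this yields
\begin{equation*}
\frac{\delta E[\vn_h^{k_\varepsilon};\vv_h^\varepsilon]}{\delta\vn} \longrightarrow \frac{\delta E[\vn_h^*;\vv_h]}{\delta\vn}.
\end{equation*}
Comparing the two displays produces \eqref{eq:critical-point} for the arbitrary $\vv_h \in \mathbb{T}(\vn_h^*)$, completing the proof.
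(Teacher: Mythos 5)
Your proof is correct, and it differs from the paper's in one genuine way: the construction of test functions in the varying tangent space $\mathbb{T}(\vn_h^{k_\varepsilon})$. You build $\vv_h^\varepsilon$ by nodally orthogonally projecting $\vv_h$ onto the tangent space of $\vn_h^{k_\varepsilon}$, i.e.\ subtracting the component along $\vn_h^{k_\varepsilon}(z)$. The paper instead employs a cross-product construction: it fixes $\vphi_h := I_h\big[|\vn_h^*|^{-2}(\vv_h\times\vn_h^*)\big]$, so that $\vn_h^*(z)\times\vphi_h(z)=\vv_h(z)$ by the BAC--CAB identity, and then uses $I_h[\vn_h^{k_\varepsilon}\times\vphi_h]\in\mathbb{T}(\vn_h^{k_\varepsilon})$ as the test function, which also converges to $\vv_h$ because $\vn_h^{k_\varepsilon}\to\vn_h^*$. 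Both devices hinge on the same two facts: the nodal lower bound $|\vn_h^{k}(z)|\geq 1$ from Remark \ref{rmk:lower-bound-length} (you use it for $\vn_h^{k_\varepsilon}$, the paper for $\vn_h^*$, but the bound is inherited by the cluster point) and strong convergence in the finite-dimensional $\mathbb{V}_h$. Your projection is arguably the more elementary device and works in any dimension; the cross-product trick is specific to $\mathbb{R}^3$ but is a standard tool in the harmonic-map literature (cf.\ the remark after the theorem), which is why the authors reach for it. Steps 1 and 3 of your argument coincide with the paper's.
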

\begin{proof}
We first note that we have the uniform bound $\Vert \nabla \vn_h^{k_\varepsilon}\Vert\leq \frac{2}{c_0}E[\vn_h^{0}]$ due to Lemma \ref{lem:equicoercivity-discrete} (equicoercivity) and the energy decreasing property of Theorem \ref{thm:energy_decrease} (energy stability and $L^\infty$ control of constraint). By compactness in the finite dimensional space $\mathbb{V}_h$, we deduce the existence of cluster points of $\{\vn_h^{k_\varepsilon}\}_{\varepsilon>0}$, namely the first claim.

If $\vn_h^* \in \V_h$ is a cluster point, we shall now prove that it is a critical point in the sense \eqref{eq:critical-point}. Let $\vv_h\in \mathbb{T}(\vn_h^*)$ and consider the discrete function  $\vphi_h := I_h\big[|\vn_h^*|^{-2}(\vv_h\times\vn_h^*)\big]\in\mathbb{V}_h$, which is well-defined because $|\vn_h^*(z)|\geq 1$ for all nodes $z\in \mathcal{N}_h$. Note that $\vn_h^*(z)\times \vphi_h(z) = \vv_h(z)$ at each $z\in \mathcal{N}_h$ because $\vv_h\in \mathbb{T}(\vn^*_h)$ and the cross product identity $\va\times(\vb\times\vc) = (\va\cdot\vc)\vb - (\va\cdot\vb)\vc$. Moreover, $I_h[\vn_h^{k_\varepsilon}\times \vphi_h]\in  \mathbb{T}(\vn_h^{k_\varepsilon})$.  

 Consider a subsequence $\vn_h^{k_\varepsilon}\to \vn_h^*$ as $\varepsilon\to0$ in any norm because $\mathbb{V}_h$ is finite dimensional. Then $I_h[\vn_h^{k_\varepsilon}\times \vphi_h]\to \vv_h$ as $\eps\to0$ as well as
\[
\frac{\delta E[\vn_h^{k_\varepsilon}; I_h[\vn_h^{k_\varepsilon}\times \vphi_h]]}{\delta \vn} \to \frac{\delta E[\vn_h^*; \vv_h]}{\delta \vn}
\]
because $\frac{\delta E}{\delta \vn}$ is continuous in each argument. Also, Corollary \ref{cor:small_residual}
(residual estimate) gives
\[
\left|\frac{\delta E[\vn_h^{k_\varepsilon}; I_h[\vn_h^{k_\varepsilon}\times \vphi_h]]}{\delta \vn}\right|\leq (1+\tau c_0+\tau c_1)\sqrt{2\varepsilon} \Vert \nabla I_h[\vn_h^{k_\varepsilon}\times \vphi_h]\Vert\leq C\sqrt{\varepsilon}\to0,
\]
whence
\begin{equation*}
\frac{\delta E[\vn_h^{*}; \vv_h]}{\delta \vn} = 0
\end{equation*}
for all $\vv_h\in T(\vn_h^*)$. This completes the proof.
\end{proof}

\begin{remark}[cross product]
\rm  
The above trick of the cross product to avoid dealing with the tangent space $\mathbb{T}(\vn_h^{k_\eps})$ has been used before in both numerical analysis and analysis of related problems \cite{alouges1997new, antil2021approximation, bartels2005stability, chen1989weak}. It hinges on the strong convergence of both $\vn_h^{k_\eps}$ and $I_h\big[ \vn_h^{k_\eps} \times \phi_h \big]$, which is true for $h$ fixed because $\V_h$ is finite dimensional. However, this argument does not extend to showing that a {\it discrete} critical point of $E$ converges to a {\it continuous} critical point as $h\to0$ as in \cite{bartels2005stability}.
This is because the product of two weakly convergent sequences may not converge weakly, which becomes an issue for the quartic terms of $E$.
\end{remark}

\subsection{Practical implementation: Lagrange multiplier}\label{sec:FO-practical-implementation}

To practically implement the gradient flow step in \eqref{eq:grad-flow-eq} we introduce a Lagrange multiplier $\lambda_h\in \mathbb{Q}_{h,0}$, the space of scalar continuous piecewise linear functions that vanish on $\partial\Omega$, and the bilinear form for the linear constraint $\vu_h\in \mathbb{T}_h(\vn^{k}_h)$, i.e.\ $(\vn^{k}_h(z)\cdot\vu_h(z) = 0)$
\begin{equation*}
b^k(\lambda_h,\vv_h) = \int_\Omega I_h[\lambda_h (\vv_h\cdot\vn_h^{k})];
\end{equation*}
this is a mass lumped $L^2$ inner product between $\lambda_h$ and $\vv_h\cdot\vn_h^{k}$ that depends on $k$. The gradient flow step is solved as a saddle point system:
\begin{align}
\label{eq:dtn-eq} a(d_t\vn^{k+1}_h,\vv_h)+b^k(\lambda_h, \vv_h) &= \langle \vf^{k},\vv_h\rangle & \forall \vv_h\in \mathbb{V}_{h,0} \\
\label{eq:lagrange-eq}b^k(\rho_h, d_t\vn^{k+1}_h) &= 0 &\forall \rho_h\in \mathbb{Q}_{h,0},
\end{align}
where
\begin{equation*}
a(\vu_h,\vv_h):= (1+c_0\tau)(\nabla \vu_h,\nabla \vv_h)+c_1\tau(\divrg \vu_h,\divrg \vv_h)
\end{equation*}
and
\begin{align*}
\langle \vf^{k},\vv_h\rangle:= -\frac{\delta  E[\vn^{k}_h;\vv_h]}{\delta \vn}.
\end{align*}
The saddle point system in \eqref{eq:dtn-eq} and \eqref{eq:lagrange-eq} is well-posed. First, the bilinear form $a$ is coercive over $\mathbb{V}_{h,0}$, and hence is coercive over the kernel of $b^k$. The bilinear form $b^k$ satisfies the following $h$-dependent and potentially suboptimal inf-sup inequality. We point to \cite{hu2009saddle} and \cite[Lemma 3.1(i)]{bartels2022quasi} for a uniform inf-sup property for $b^k$ measured in different norms.
\begin{prop}[inf-sup for linearized constraint]
Let $\tau h^{-1}\leq \Cstab$ and $|\vn^{0}_h(z)| = 1$ for all nodes $z\in\mathcal{N}_h$ as in Theorem \ref{thm:energy_decrease}. Let $\vn^{k}_h\in \mathbb{V}_h$ be the $k$-th iterate generated by Algorithm \ref{alg:grad-flow}. Then the bilinear form $b^k:\mathbb{Q}_{h,0}\times \mathbb{V}_{h,0}$ satisfies the following inf-sup inequality
\begin{equation}\label{eq:L2-inf-sup}
\inf_{\lambda_h\in \mathbb{Q}_{h,0}\setminus\{0\}}\sup_{\vv_h\in \mathbb{V}_{h,0}\setminus\{0\} }\frac{b^k(\lambda_h,\vv_h)}{\Vert \vv_h\Vert_{H^1}\Vert \lambda_h\Vert} \geq c h,
\end{equation} 
where $c>0$ only depends on $ E[\vn^{0}_h], \Cstab, c_{inv}$ and shape regularity and quasiuniformity of the sequence of triangulations $\{\mathcal{T}_h\}_h$.
\end{prop}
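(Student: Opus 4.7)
The natural strategy is to prove the inf-sup inequality by exhibiting an explicit test function $\vv_h \in \mathbb{V}_{h,0}$ for each $\lambda_h \in \mathbb{Q}_{h,0}$ and estimating $b^k(\lambda_h, \vv_h)$ from below and $\|\vv_h\|_{H^1}$ from above. The obvious candidate, motivated by the pointwise structure of $b^k$ (a mass-lumped $L^2$ pairing), is
\[
\vv_h := I_h\big[\lambda_h \, \vn_h^{k}\big] \in \mathbb{V}_{h,0},
\]
which belongs to $\mathbb{V}_{h,0}$ because $\lambda_h$ vanishes at all boundary nodes. At each node $z \in \mathcal{N}_h$ one has $\vv_h(z) \cdot \vn_h^{k}(z) = \lambda_h(z)\,|\vn_h^{k}(z)|^2$, hence
\[
b^k(\lambda_h, \vv_h) = \int_\Omega I_h\big[\lambda_h^2 \,|\vn_h^{k}|^2\big].
\]

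The first key ingredient is the lower nodal bound $|\vn_h^{k}(z)|^2 \ge 1$ from Remark \ref{rmk:lower-bound-length}, which (since nodal ordering is preserved by the Lagrange interpolant of non-negative quantities) yields $I_h[\lambda_h^2\,|\vn_h^{k}|^2] \ge I_h[\lambda_h^2]$ pointwise. Combined with the standard norm equivalence $\int_\Omega I_h[\lambda_h^2] \simeq \|\lambda_h\|^2$ coming from mass lumping on shape-regular meshes, this gives
\[
b^k(\lambda_h, \vv_h) \ge c\,\|\lambda_h\|^2
\]
for a constant $c>0$ depending only on shape regularity.

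The second ingredient is an upper bound on $\|\vv_h\|_{H^1}$. Using mass lumping on $\mathbb{V}_h$ again,
\[
\|\vv_h\|^2 \lesssim \sum_{z\in\mathcal{N}_h} |\omega_z|\,\lambda_h(z)^2\,|\vn_h^{k}(z)|^2 \le \|\vn_h^{k}\|_{L^\infty}^2 \sum_{z\in\mathcal{N}_h} |\omega_z|\,\lambda_h(z)^2 \lesssim \|\vn_h^{k}\|_{L^\infty}^2\,\|\lambda_h\|^2.
\]
By Theorem \ref{thm:energy_decrease} applied under the hypothesis $\tau h^{-1} \le \Cstab$, we have $\|\vn_h^{k}\|_{L^\infty}^2 \le 1 + 4 c_{inv}^2 \Cstab E[\vn_h^{0}]$, so $\|\vv_h\| \lesssim \|\lambda_h\|$ with a constant depending only on $E[\vn_h^{0}], \Cstab, c_{inv}$. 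A global inverse inequality then gives $\|\nabla \vv_h\| \lesssim h^{-1}\|\vv_h\| \lesssim h^{-1}\|\lambda_h\|$, so
\[
\|\vv_h\|_{H^1} \lesssim h^{-1}\,\|\lambda_h\|.
\]

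Combining the two bounds yields
\[
\frac{b^k(\lambda_h,\vv_h)}{\|\vv_h\|_{H^1}\,\|\lambda_h\|} \ge \frac{c\,\|\lambda_h\|^2}{C h^{-1}\|\lambda_h\|^2} = c'\,h,
\]
which is the asserted inequality. The main obstacle here is purely conceptual rather than technical: the whole argument hinges on controlling $\vn_h^{k}$ pointwise from below by $1$ and from above uniformly in $k$, and precisely these two properties are delivered by Remark \ref{rmk:lower-bound-length} and Theorem \ref{thm:energy_decrease} under $\tau h^{-1} \le \Cstab$. The suboptimal factor $h$ is intrinsic to measuring $\vv_h$ in the $H^1$ norm while the pairing $b^k$ is $L^2$-like; a uniform (or better) inf-sup would require a weaker norm on $\vv_h$, consistent with the remark in the excerpt pointing to \cite{hu2009saddle, bartels2022quasi}.
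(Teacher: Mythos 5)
Your proposal is correct and uses the same test function $\vv_h = I_h[\lambda_h\vn_h^k]$ and the same lower bound $b^k(\lambda_h,\vv_h) \ge c\,\|\lambda_h\|^2$ (via the nodal bound $|\vn_h^k(z)|^2\ge 1$ of Remark~\ref{rmk:lower-bound-length} together with mass-lumping norm equivalence) as the paper. Where you deviate is in the upper bound for $\|\vv_h\|_{H^1}$: you first establish $\|\vv_h\| \lesssim \|\vn_h^k\|_{L^\infty}\,\|\lambda_h\| \lesssim \|\lambda_h\|$ by mass lumping and the $L^\infty$ bound from Theorem~\ref{thm:energy_decrease}, and then apply a single global inverse inequality to obtain $\|\nabla\vv_h\| \lesssim h^{-1}\|\lambda_h\|$. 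The paper instead estimates $\|\nabla\vv_h\|_{L^2(T)}$ elementwise, subtracting a local mean $\vc_T$ from $\lambda_h\vn_h^k$, applying a local inverse estimate and the $L^\infty$ stability of $I_h$, and expanding $\nabla(\lambda_h\vn_h^k)$ via the product rule with local inverse estimates on both $\lambda_h$ and $\vn_h^k$; squaring and summing over $T$ then gives the same $h^{-1}$ factor. Both routes yield $\|\vv_h\|_{H^1}\lesssim h^{-1}\|\lambda_h\|$ with identical dependence on $E[\vn_h^0]$, $\Cstab$, $c_{inv}$ and mesh quasi-uniformity. Your global argument is technically leaner and fully rigorous; the paper's elementwise decomposition is the type of estimate that could be sharpened if $\|\nabla\vn_h^k\|_{L^\infty}$ were bounded uniformly in $h$ (as alluded to in the remark following the proposition), but under the stated hypotheses it buys nothing extra over your approach.
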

\begin{proof}
It suffices to prove that given a $\lambda_h\in \mathbb{Q}_{h,0}$ there exists $\vv_h\in \mathbb{V}_{h,0}$ such that $b^k(\lambda_h,\vv_h)\geq c h \Vert \vv_h\Vert_{H^1}\Vert \lambda_h\Vert$.

Let $\lambda_h\in \mathbb{Q}_{h,0}$ and choose $\vv_h = I_h[\lambda_h\vn_h^{k}]\in \mathbb{V}_{h,0}$. At each node, $z\in \mathcal{N}_h$, we have 
\[
\lambda_h(z)\vv_h(z)\cdot \vn_h^{k} = |\lambda_h(z)|^2 |\vn_h^{k}(z)|^2.
\]
Recall that Algorithm \ref{alg:grad-flow} produces $|\vn_h^{k}(z)|^2\geq 1$ at each node according to Remark \ref{rmk:lower-bound-length} (lower bound on $|\vn_h^{k}(z)|^2$). Hence, $\lambda_h(z)\vv_h(z)\cdot \vn_h^{k} \geq \lambda_h(z)^2$, and there is a constant $c>0$ independent of $h$ such that
\[
b^k(\lambda_h,\vv_h) =\int_\Omega I_h[\lambda_h (\vv_h\cdot\vn_h^{k})]\; d\vx \geq \int_\Omega I_h[\lambda_h^2]\; d\vx\geq c\Vert \lambda_h\Vert^2
\]
by virtue of the norm equivalence $\Vert I_h[\lambda_h]\Vert \approx \Vert \lambda_h\Vert$ on $\mathbb{Q}_{h,0}$. 

We are left to show $\Vert \vv_h\Vert_{H^1}\leq c h^{-1}\Vert\lambda_h\Vert$. Let $T\in\mathcal{T}_h$ be arbitrary
and notice that on $T$
\[
\nabla \vv_h = \nabla I_h\big[ \lambda_h \vn_h^k - \vc_T \big]
\]
where $\vc_T\in\mathbb{R}^3$ is a suitable constant. Applying local inverse and interpolations estimates, as well as the local stability of the Lagrange interpolation operator in $L^\infty$,  yields
\begin{align*}
  \| \nabla \vv_h \|_{L^2(T)} & \lesssim h_T^{-1+\frac32} \| I_h \big[ \lambda_h \vn_h^k - \vc_T \big] \|_{L^\infty(T)}
  \\
  &\le h_T^{\frac12} \|\lambda_h \vn_h^k - \vc_T \|_{L^\infty(T)}
  \\
  & \lesssim h_T^{\frac32} \|\nabla\lambda_h\|_{L^\infty(T)} \|\vn_h^k\|_{L^\infty(T)} +
  h_T^{\frac32} \|\lambda_h\|_{L^\infty(T)} \|\nabla\vn_h^k\|_{L^\infty(T)}
\end{align*}
upon taking $\vc_T$ to be the meanvalue of $\lambda_h \vn_h^k$ in $T$. In view of Theorem \ref{thm:energy_decrease} (energy stability and $L^\infty$ control of constraint), we deduce
\[
\|\vn_h^k\|_{L^\infty(T)} \lesssim 1+2c_{inv} \sqrt{\Cstab E[\vn_h^0]} =: c
\quad\Rightarrow\quad
\|\nabla \vn_h^k\|_{L^\infty(T)} \lesssim c\, h_T^{-1} 
\]
by a local inverse estimate. We then apply local inverse estimates on $\lambda_h$ 
to deduce
\[
\| \nabla \vv_h \|_{L^2(T)} \lesssim  c\, h_T^{-1}  \, \|\lambda_h\|_{L^2(T)}.
\]
Squaring, adding over $T\in\mathcal{T}_h$ and using that $\mathcal{T}_h$ is quasi-uniform gives \eqref{eq:L2-inf-sup}.
\end{proof}
\begin{remark}  \rm
  Our inf-sup condition in \eqref{eq:L2-inf-sup} is proportional to $h$ because the norm $\|\lambda_h\|$ of
    the multiplier is $L^2(\Omega)$ rather than $H^{-1}(\Omega)$. In \cite{bartels2022quasi,hu2009saddle} a uniform
    inf-sup constant of the form $c \|\nabla \vn_h^k\|_{L^\infty}^{-1}$ is derived for harmonic maps provided
    $|\vn_h^k(z)|=1$ for all $z\in\mathcal{N}_h$. Two comments are in order. First, the inf-sup constant is
    mesh-independent provided $\|\nabla\vn_h^k\|_{L^\infty}\le C$, but this precludes the occurrence of defects
    whose capture and approximation is one of the highlights of this paper. Second, the proof relies on
    enforcing the unit length constraint of $\vn_h^k$ at nodes, which is against the relaxed condition
    $\norm{I_h [ |\vn_h^k|^2 - 1 ]} \le \eta$ assumed in definition \eqref{eq:discrete-admin-set}
    of the discrete admissible set $\mathcal{A}_{\vg,h,\eta}$.
    
\end{remark}
\begin{remark}[Newton iteration]
  \rm If we were to implement Newton's method to find critical points of $ E$ over $\mathcal{A}_{\vg,h,0}$, the linear system for the Newton iterates $d_t\vn^{k+1}_h = \vn^{k+1}_h - \vn_h^{k}$ and $d_t\lambda^{k+1}_h = \lambda^{k+1}_h - \lambda_h^{k}$ would read
\begin{align*}
\frac{\delta^2 E[\vn^k_h; d_t\vn^{k+1}_h, \vv_h]}{\delta \vn^2} + b^k(d_t\lambda_h^{k+1}, \vv_h) &= - \frac{\delta  E[\vn^k_h; \vv_h]}{\delta \vn} - b^k(\lambda_h^{k}, \vv_h)\\
b^k(\rho_h, d_t\vn^{k+1}_h) &= -\frac{1}{2}\int_\Omega I_h\Big[\big(|\vn^k_h|^2-1\big) \rho_h\Big] \; d\vx.
\end{align*}
The above system has a similar structure to the system \eqref{eq:dtn-eq} and \eqref{eq:lagrange-eq}. First, the form $b^k$ would satisfy the same $h$ dependent inf-sup condition. Second, it is not clear whether $\delta^2E[\vn^k_h; \cdot, \cdot]/\delta \vn^2$ is coercive. One needs to find an energy equivalent to $E$ to ensure coercivity. This was done in \cite{adler2015energy}, which shows that an appropriate modification of $E$ leads to coercivity of the second variation for $k_2/k_3\in (1-\varepsilon_k,1+\varepsilon_k)$ for some $\varepsilon_k$ that depends on $\vn_h^k$. However, \cite[Remark 3.9]{adler2015energy} points out that the bound on $\varepsilon_k$ goes to 0 as $\Vert\nabla\vn_h^k\Vert_{L^\infty}\to\infty$. As a result, we might expect to lose coercivity with mesh refinement if $k_2\neq k_3$ and if there are defects present in the liquid crystal. 
\end{remark}

For solving the saddle point system, we use MINRES \cite{paige1975solution}. 

\section{Magnetic effects}\label{sec:magnetic}

This section addresses how to adjust the preceding theory to the presence of a {\it fixed} magnetic field $\vH$. 
For  $\vH\in L^2(\Omega;\mathbb{R}^3)$ the magnetic energy is \cite[Ch. 4.1]{virga1995variational}
\begin{equation*}
\Emag[\vn] = -\frac{\chi_A}{2}\int_{\Omega}(\vn\cdot\vH)^2d\vx
\end{equation*}
where $\chi_A$ is the diamagnetic anisotropy, which measures how much a liquid crystal wants to either align with the magnetic field or align orthogonally to the magnetic field. The parameter $\chi_A$ may be positive or negative depending on the material. For this paper, we consider $\chi_A\geq 0$, which favors alignment of $\vn$ with $\vH$.

With the magnetic energy, the total energy becomes 
\begin{equation*}
\Etotal[\vn] :=  E[\vn]+\Emag[\vn]
\end{equation*}
Since the magnetic contribution is a lower order term, existence of minimizers is still true \cite[Theorem 2.3]{hardt1987mathematical}. We now summarize the numerical results in the presence of the extra magnetic field $\vH$ and remark on how the proofs are modified.

\subsection{Convergence of minimizers}
%
The following statement is a complement to Theorem \ref{thm:conv-min} (convergence of minimizers).
\begin{theorem}[convergence of minimizers with magnetic field]
Let $h\to0$. There exists a sequence $\{\eta_h\}_h$ with $\eta_h\to0$ as $h\to0$ such that the sequence of minimizers $\vn^*_{h,\eta_h}$ of $\Etotal$ over the admissible set $\mathcal{A}_{\vg, h,\eta_h}$ admits a subsequence (not relabeled) $\vn^*_{h,\eta_h}$ such that $\vn^*_{h,\eta_h}\rightharpoonup \vn^*$ in $H^1(\Omega;\mathbb{R}^3)$ and $\vn^*\in \mathcal{A}_{\vg}$ is a minimizer of $\Etotal$ over $\mathcal{A}_\vg$. Moreover, $\Etotal[\vn^\ast_{h,\eta_h}]\to \Etotal[\vn^\ast]$ as $h\to0$.

\end{theorem}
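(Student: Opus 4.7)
The proof will closely mimic that of Theorem \ref{thm:conv-min} (convergence of minimizers), since the magnetic contribution $\Emag$ is a lower order perturbation. The plan is to re-establish the three ingredients of the $\Gamma$-convergence argument---recovery sequence, equicoercivity, and weak lower semicontinuity---now for $\Etotal=E+\Emag$, and then to re-run the three-step argument from the proof of Theorem \ref{thm:conv-min} verbatim.

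First, for the \emph{recovery sequence}, I would reuse the Cl\'ement interpolant $\vn_h = \mathcal{I}_h \vn$ constructed in Lemma \ref{lem:recovery-sequence}. Since that sequence already satisfies $\vn_h \to \vn$ in $H^1(\Omega;\mathbb{R}^3)$, $\Vert\vn_h\Vert_{L^\infty}\le 1$, and $E[\vn_h]\to E[\vn]$, it suffices to prove $\Emag[\vn_h]\to\Emag[\vn]$. Using the identity $(\vn_h\cdot\vH)^2-(\vn\cdot\vH)^2 = ((\vn_h-\vn)\cdot\vH)((\vn_h+\vn)\cdot\vH)$, Cauchy--Schwarz, and the pointwise bound $|\vn_h-\vn|^2|\vH|^2\le 4|\vH|^2\in L^1(\Omega)$, dominated convergence (along a pointwise a.e.\ convergent subsequence, then by a standard subsequence argument for the whole sequence) gives $\int_\Omega(\vn_h\cdot\vH)^2\,d\vx \to \int_\Omega(\vn\cdot\vH)^2\,d\vx$.

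Next, for \emph{equicoercivity}, I combine Lemma \ref{lem:equicoercivity-discrete} with the trivial lower bound $\Emag[\vv_h]\ge -\tfrac{\chi_A}{2}\Vert\vv_h\Vert_{L^\infty}^2\Vert\vH\Vert^2\ge -\tfrac{\chi_A}{2}C^2\Vert\vH\Vert^2$, valid for every $\vv_h\in\mathcal{A}_{\vg,h,\eta}$ by virtue of the uniform $L^\infty$ bound built into the discrete admissible set \eqref{eq:discrete-admin-set}. This yields
\begin{equation*}
\tfrac{1}{2}c_0\Vert\nabla\vv_h\Vert^2 \le \Etotal[\vv_h]+\tfrac{\chi_A}{2}C^2\Vert\vH\Vert^2,
\end{equation*}
which is what is needed to pass from a uniform energy bound to a uniform $H^1$ bound. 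For \emph{weak lower semicontinuity}, write $\Etotal = E + \Emag$ and invoke Lemma \ref{lem:discrete-wlsc} for $E$. For the magnetic term, if $\vn_{h,\eta}\rightharpoonup\vn^\ast$ in $H^1$ with $\Vert\vn_{h,\eta}\Vert_{L^\infty}\le C$, Rellich gives $\vn_{h,\eta}\to\vn^\ast$ in $L^2$, and the same dominated convergence argument as above (using $|\vn_{h,\eta}-\vn^\ast|\le 2C$ and $|\vH|^2\in L^1$) yields $\Emag[\vn_{h,\eta}]\to\Emag[\vn^\ast]$, so $\Emag$ is actually continuous along such sequences. Hence $\Etotal$ is w.l.s.c.

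With these three ingredients in hand, the three-step argument (convergence, characterization of $\vn^\ast$, convergence of energies) of Theorem \ref{thm:conv-min} transcribes unchanged, with $E$ replaced by $\Etotal$, and $R$ redefined via $R^2=|\Omega|+2(m+\tfrac{\chi_A}{2}\Vert\vH\Vert^2)c_0^{-1}$ to absorb the magnetic lower bound. The main obstacle, if any, is merely bookkeeping to ensure the uniform $L^\infty$ bound from $\mathcal{A}_{\vg,h,\eta}$ is exploited consistently to both control $\Emag$ from below and justify the passage to the limit; no new analytical machinery is required because $\Emag$ only depends on $\vn$ through its $L^2$ behavior.
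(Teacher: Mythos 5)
Your proof takes essentially the same approach as the paper: upgrade the three ingredients (recovery sequence, equicoercivity, weak lower semicontinuity) to $\Etotal$ by treating $\Emag$ as a lower-order perturbation, using the uniform $L^\infty$ bound in $\mathcal{A}_{\vg,h,\eta}$ together with dominated convergence for the magnetic term, and then rerun the $\Gamma$-convergence argument of Theorem~\ref{thm:conv-min} verbatim. The details you supply (the lower bound $\Emag[\vv_h]\ge -\tfrac{\chi_A}{2}C^2\Vert\vH\Vert^2$, the subsequence/dominated-convergence step, and the redefinition of $R$) are exactly the right bookkeeping and match the paper's reasoning.
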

\begin{proof}
We simply adjust the proofs of Lemmas \ref{lem:recovery-sequence} (recovery sequence), \ref{lem:equicoercivity-discrete} (equicoercivity), and \ref{lem:discrete-wlsc} (weak lower semicontinuity), exploiting the fact that $\Emag[\vn]$ is a lower order perturbation of $E[\vn]$ for $\vH$ fixed. For both Lemmas \ref{lem:recovery-sequence} and \ref{lem:discrete-wlsc}, we can extract a subsequence $\{\vn_h\}_h$ (not relabeled) such that $\vn_{h}\to\vn$ strongly in $L^2$ and a.e. in $\Omega$. Since $\vn_h$ is uniformly bounded in $L^\infty$, the Lebesgue Dominated Convergence Theorem implies that $\Emag[\vn_{h}] \to \Emag[\vn]$, and both the recovery sequence and liminf arguments carry over. For Lemma \ref{lem:equicoercivity-discrete}, the uniform bound in $L^\infty$ in the definition of $\mathcal{A}_{\vg,h,\eta}$ ensures that $\Emag[\vn_{h,\eta}]\geq -\frac{C|\chi_A|}{2}\Vert \vH\Vert^2$, which does not impact equicoercivity of $\Etotal$.
\end{proof}

\subsection{Gradient flow}
We need a slight modification of Algorithm \ref{alg:grad-flow} (projection-free gradient flow): since $\chi_A\geq0$, we treat $\Emag$ explicitly to guarantee energy decrease. The resulting scheme reads as follows: find $d_t\vn^{k+1}_h \in \mathbb{T}_h(\vn^{k}_h)$ that solves
\begin{equation}\label{eq:grad-flow-magnetic}
(1+\tau c_0)(\nabla d_t\vn^{k+1}_h, \nabla \vv_h) +\tau c_1(\divrg d_t\vn^{k+1}_h, \divrg \vv_h) = - \frac{\delta \Etotal[\vn_h^{k};\vv_h]}{\delta \vn}
\end{equation}
for all $\vv_h\in \mathbb{T}_h(\vn^{k}_h)$ instead of \eqref{eq:grad-flow-eq}. We then have the following complement to Theorem \ref{thm:energy_decrease} (energy stability and $L^\infty$ control of constraint).
\begin{theorem}[energy decrease and $L^\infty$ control of constraint with magnetic effects]
  Let $\vn^{0}_h\in \mathbb{V}_h$ such that $|\vn^{0}_h(z)|^2= 1$ for all $z\in \mathcal{N}_h$. There is a constant $0<\Cstab\leq1$ which may depend on $\Etotal[\vn^{0}_h], c_{inv},$ and $c_i$ for $i=0,1,2,3$ such that if $\tau h^{-1}\leq \Cstab$ then
\[
\Etotal[\vn^{k+1}_h] +\frac{\tau}{2} \sum_{\ell=0}^k\Vert \nabla d_t\vn^{\ell+1}_h\Vert^2 \leq \Etotal[\vn^{0}_h]
\quad\forall k
\]
and
\[
\big| |\vn^{k+1}_h(z)|^2-1 \big| \leq 4c_{inv}^2 \tau h^{-1} \Etotal[\vn^{0}_h]
\quad\forall z\in \mathcal{N}_h,
\]
where $c_{inv}$ is the constant from Lemma \ref{lem:discrete-sob} (discrete Sobolev inequality).
\end{theorem}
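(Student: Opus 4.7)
The plan is to adapt the inductive argument of Theorem \ref{thm:energy_decrease} by treating the magnetic energy as a quadratic lower-order perturbation handled explicitly in the flow \eqref{eq:grad-flow-magnetic}. Testing \eqref{eq:grad-flow-magnetic} with $\tau d_t\vn^{k+1}_h$ produces exactly the quadratic and quartic terms analyzed in Theorem \ref{thm:energy_decrease}, plus the new contribution
\[
-\tau\frac{\delta \Emag[\vn_h^k; d_t\vn^{k+1}_h]}{\delta\vn} = \chi_A\tau \int_\Omega (\vn_h^k\cdot\vH)(d_t\vn^{k+1}_h\cdot\vH)\, d\vx.
\]
To absorb this into a telescoping structure, I would use the polarization identity
\[
\Emag[\vn_h^{k+1}] = \Emag[\vn_h^k] + \tau\frac{\delta\Emag[\vn_h^k; d_t\vn^{k+1}_h]}{\delta\vn} - \frac{\chi_A\tau^2}{2}\|d_t\vn^{k+1}_h\cdot\vH\|^2,
\]
and exploit $\chi_A\geq 0$, which makes the last term non-positive. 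Rearranging yields
\[
-\tau\frac{\delta \Emag[\vn_h^k; d_t\vn^{k+1}_h]}{\delta\vn}\leq \Emag[\vn_h^k] - \Emag[\vn_h^{k+1}],
\]
so the magnetic contribution on the right-hand side of the tested equation is controlled by the decrement of $\Emag$. Adding this to the estimates of Theorem \ref{thm:energy_decrease} for the splay, twist, and bend terms would yield the asserted energy inequality with $\Etotal$ in place of $E$, with telescopic cancellation producing the summed gradient term.

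For the pointwise length estimate, the orthogonality $d_t\vn_h^{k+1}(z)\cdot\vn_h^k(z)=0$ still gives $|\vn_h^{k+1}(z)|^2 = |\vn_h^k(z)|^2+\tau^2|d_t\vn_h^{k+1}(z)|^2$, and telescoping combined with Lemma \ref{lem:discrete-sob} produces a bound controlled by $\Etotal[\vn_h^0]-\Etotal[\vn_h^{k+1}]$. Since $\Etotal$ may be negative, a uniform lower bound on $\Etotal[\vn_h^{k+1}]$ is required. This follows from the inductively established $L^\infty$ bound on $\vn_h^{k+1}$, which yields $|\Emag[\vn_h^{k+1}]|\leq \frac{\chi_A}{2}\|\vn_h^{k+1}\|_{L^\infty}^2\|\vH\|^2$; the resulting additive constant can be absorbed into the stability constant $\Cstab$.

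The main obstacle I anticipate is the bookkeeping of the induction closure. The constants $c', c'', \ldots$ in Theorem \ref{thm:energy_decrease} will acquire new dependencies on $\chi_A$, $\|\vH\|$, and the $L^\infty$ bound on $\vn_h^k$, and the intermediate estimates analogous to \eqref{eq:int-est-grad} and \eqref{eq:int-est} must be revisited. The saving grace is that, because $\Emag$ is quadratic and treated explicitly via the polarization identity, its contribution to the tested equation does not require an $L^\infty$-to-$H^1$ inverse estimate on $d_t\vn_h^{k+1}$, and hence does not introduce the factor $h^{-1/2}$ that forced the stability restriction in the quartic case. Consequently, the condition $\tau h^{-1}\leq \Cstab$ is not worsened by the presence of $\vH$, and the rest of the proof proceeds in the same spirit as Theorem \ref{thm:energy_decrease}.
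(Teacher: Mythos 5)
Your approach matches the paper's proof: test the modified flow \eqref{eq:grad-flow-magnetic} with $\tau d_t\vn_h^{k+1}$, use the quadratic (polarization) identity to rewrite the magnetic contribution as $\Emag[\vn_h^k]-\Emag[\vn_h^{k+1}]-\tfrac{\tau^2\chi_A}{2}\|d_t\vn_h^{k+1}\cdot\vH\|^2$, drop the last term by $\chi_A\geq0$, and fold the telescoping part into the energy inequality of Theorem \ref{thm:energy_decrease}. This is exactly what the paper does, and you are also right that no additional $h^{-1/2}$ inverse estimate is triggered, so the structural form of the time-step restriction is unchanged.

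The one place you go beyond the paper --- and where your fix is slightly imprecise --- is the pointwise constraint estimate. The paper dismisses this with ``the length constraint is not directly related to $\Emag$,'' but as you note, the telescoped inequality gives $|\vn_h^{k+1}(z)|^2-1\leq 2c_{inv}^2\tau h^{-1}\big(\Etotal[\vn_h^0]-\Etotal[\vn_h^{k+1}]\big)$, and $\Etotal[\vn_h^{k+1}]\geq0$ no longer holds. Your idea to bound $-\Emag[\vn_h^{k+1}]$ via the inductive $L^\infty$ bound is the right move, but the outcome cannot be ``absorbed into $\Cstab$'': the bound $-\Emag[\vn_h^{k+1}]\leq\tfrac{\chi_A}{2}\|\vn_h^{k+1}\|_{L^\infty}^2\|\vH\|^2$ contributes an \emph{additive} constant inside the parentheses, yielding $C\,\tau h^{-1}\big(\Etotal[\vn_h^0]+c_{\mathrm{mag}}\big)$ with $c_{\mathrm{mag}}$ depending on $\chi_A$, $\|\vH\|$ and the $L^\infty$ bound. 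Shrinking $\Cstab$ only tightens the prefactor $\tau h^{-1}$; it does not remove $c_{\mathrm{mag}}$. Matching the stated constant $4c_{inv}^2\tau h^{-1}\Etotal[\vn_h^0]$ therefore requires either an implicit smallness assumption on the magnetic energy relative to $\Etotal[\vn_h^0]$, or accepting that the pointwise-constraint constant also depends on $\chi_A$ and $\|\vH\|$. The same dependence should propagate to the intermediate constants $c',c'',\ldots$ in Step 2--4 of the Theorem \ref{thm:energy_decrease} argument, since $E[\vn_h^k]$ is now bounded by $\Etotal[\vn_h^0]-\Emag[\vn_h^k]$ rather than by $E[\vn_h^0]$, which you do flag.
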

\begin{proof}
The explicit treatment of $\Emag$ guarantees energy decrease due to the quadratic identity $(\va,\va-\vb) = \frac{1}{2}\norm{\va}^2 - \frac{1}{2}\norm{\vb}^2 +\frac{1}{2}\norm{\va-\vb}^2$. In fact, we have
\begin{align*}
- \frac{\delta E_m[\vn^{k}_h;\tau d_t\vn^{k+1}_h]}{\delta \vn} &= -\frac{\chi_A}{2}\|\vn^{k}_h\cdot\vH\|^2 +\frac{\chi_A}{2}\|\vn^{k+1}_h\cdot\vH\|^2 -\frac{\tau^2\chi_A}{2}\|d_t\vn^{k+1}_h\cdot\vH\|^2\\
&=\Emag[\vn^{k}_h] - \Emag[\vn^{k+1}_h]-\frac{\tau^2\chi_A}{2}\|d_t\vn^{k+1}_h\cdot\vH\|^2.
\end{align*}
Inserting this low order perturbation in the proof of Theorem \ref{thm:energy_decrease} does not alter the derivation of the energy bound. Moreover, the length constraint is not directly related to $\Emag$.
\end{proof}
Corollaries \ref{cor:L1-constraint} (control of $L^1$ violation of constraint) and \ref{cor:small_residual} (residual estimate) as well as Theorem \ref{T:critical-points} (critical points) in Section \ref{sec:proj-free} also extend to magnetic fields.

\section{Computational results}\label{sec:computation}

Our algorithm was implemented\footnote{Code implementing Algorithm \ref{alg:grad-flow} is available at \href{https://github.com/LBouck/FullFrankOseen-2024}{https://github.com/LBouck/FullFrankOseen-2024} } 
in the multi-physics software NGSolve \cite{schoberl2014c++} and visualizations were made with ParaView \cite{ahrens2005paraview}. Since we are interested in the influence of Frank's constants, we introduce the following notation to indicate the three main effects of \eqref{E:FO}:
\begin{align*}
\mathsf{splay}[\vn] := \int_\Omega (\divrg\vn)^2d\vx, \quad \mathsf{twist}[\vn] := \int_\Omega (\vn\cdot\curl\vn)^2d\vx,\quad \mathsf{bend}[\vn] := \int_\Omega (\vn\times\curl\vn)^2d\vx.
\end{align*}
We also use short hand notation for the $L^p$ discrete unit length constraint error
\begin{equation*}
\mathsf{err}_p(\vn_h) := \|I_h[|\vn_h|^2-1]\|_{L^p}.
\end{equation*}
Finally $\vn_h^{\infty}$ will denote the solution produced by Algorithm \ref{alg:grad-flow} when the desired tolerance is reached.

\subsection{Frank's constants and defects}

This section investigates how defects may change behavior under the influence of Frank's constants $k_1$ (splay), $k_2$ (twist), $k_3$ (bend). The first example is the instability of the degree-one defect $\vn_1(\vx) = \vx/|\vx|$ for $k_2$ sufficiently small relative to $k_1$ and $k_3$, known as H\'{e}lein's condition. The second example is the instability of a degree-two defect and the influence of Frank's constants on the resulting configuration.

\subsubsection{H\'{e}lein's condition}

The second variation of $E$ over $\mathcal{A}_\vg$ with
  $\Omega = B_1(0)$ at the degree-one defect $\vn_1(\vx)$ is known to be positive definite if and only if the
  H\'{e}lein's condition \cite{kinderlehrer1992second}
\begin{equation}\label{eq:helene-condition}
8(k_2 - k_1)+k_3\geq 0,
\end{equation}
is satisfied. This computational experiment explores the structure of the solution $\vn_h$ when \eqref{eq:helene-condition} is violated. Note that $\mathsf{twist}[\vn_1]=\mathsf{bend}[\vn_1]=0$, so $\vn_1$ consistes of pure splay.
H\'{e}lein's condition quantifies the tradeoff between splay, bend and twist energies. If bend and twist constants $k_2,k_3$, are small relative to splay constant $k_1$, then it is energetically favorable for a configuration to bend and twist a little; \eqref{eq:helene-condition} does not hold. If $k_1$ is small relative to $k_2,k_3$, then \eqref{eq:helene-condition} is valid and the energy cannot reduce by bending and twisting, and splay is the preferred configuration.

For the next set of computations, we let Frank constants be
$$
k_1 = k_3 = 1,\quad k_2 = 0.1,
$$
which correspond to violation of \eqref{eq:helene-condition}; we thus expect $\vn_1(\vx)$ not to be a minimizer. We set the initial condition of the gradient flow to be $\vn^0_h = I_h \vn_1$ and set the discretization parameters to be $\tau = h = 2^{-\ell/2}$ for  $\ell = 4,\ldots, 9$ to see how the projection-free gradient flow behaves when decreasing $h$ and $\tau$. Table \ref{tab:quantitative-prop} shows the initial and final energy as well as gradient flow iteration counts; note that the number of gradient flow iterations grows like $\mathcal{O}(\tau^{-1})$. According to Theorem \ref{thm:energy_decrease} (energy stability and $L^\infty$ violation of constraint) and Corollary \ref{cor:L1-constraint} (control of $L^1$ violation of constraint), we expect $\textsf{err}_\infty(\vn_h^{\infty})\lesssim 1$ and $\textsf{err}_1(\vn_h^{\infty})\lesssim h$. Table \ref{tab:quantitative-errors} shows that $\textsf{err}_1(\vn_h^{\infty})\lesssim h$ and $\textsf{err}_\infty(\vn_h^{\infty})$ starts to decrease and perform slightly better than $\mathcal{O}(1)$.

\begin{figure}
\begin{center}
\includegraphics[width=.5\textwidth]{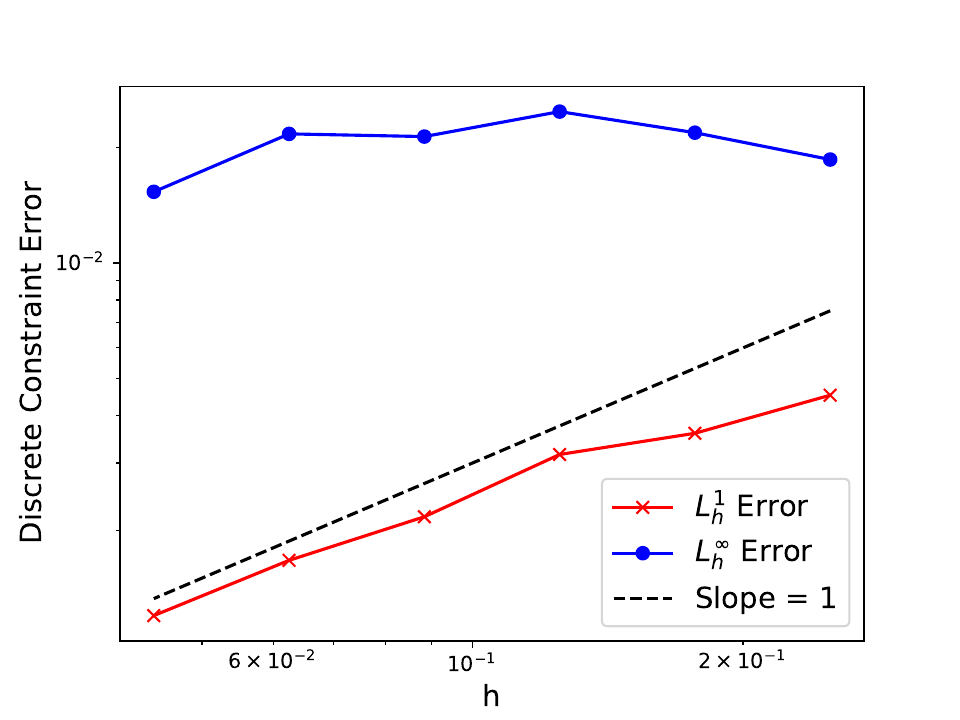}
\end{center}
\caption[Unit length constraint errors with respect to refinement in mesh size and pseudotime-step size]{Plot of discrete unit length constraint errors $\Vert I_h[|\vn_h^\infty|^2-1]\Vert_{L^p(\Omega)}$ for $p=1,\infty$. Note that Theorem \ref{thm:energy_decrease} and Corollary \ref{cor:L1-constraint} imply that $\textsf{err}_1(\vn_h^{\infty})\lesssim h$ and $\textsf{err}_\infty(\vn_h^{\infty})\lesssim 1$ provided $\tau h^{-1}\leq \Cstab$. The computational results corroborate the theory.} \label{fig:quantitative-prop}
\end{figure}

\begin{table}
\begin{center}
\begin{tabular}{ |c |c| c| c| }
\hline
$h$ 			& $ E[\vn_h^{0}] $ 	& $ E[\vn_h^{\infty}] $ 	& GF Iterations \\\hline
 $2^{-2}$ 		& 21.686 				& 21.147					&    397	\\  \hline
 $2^{-5/2}$ 	& 22.282 				& 21.557					&    178	\\  \hline
 $2^{-3}$ 		& 23.067				& 21.871 					&    248	\\  \hline
 $2^{-7/2}$ 	& 23.104 				& 21.925 					&    660	\\  \hline
 $2^{-4}$ 		& 23.480				& 21.988 					&    544	\\  \hline
 $2^{-9/2}$ 	& 23.521 				& 21.990 					&    794	\\  \hline

\end{tabular}
\end{center}
\caption[Gradient flow iterations with respect to refinement in mesh size and pseudotime-step size]{Table of initial energies, final energies and number of gradient flow iterations for decreasing values of $h$ to approximate a degree 1 defect with H\'{e}lein's condition \eqref{eq:helene-condition} being violated.} \label{tab:quantitative-prop}
\end{table}

\begin{table}
\begin{center}
\begin{tabular}{ |c |c| c| c| }\hline
$\ell$ & $\textsf{err}_1(\vn_h^{\infty}) $ & $\textsf{err}_\infty(\vn_h^{\infty}) $ & GF Iterations \\\hline
 1 & 1.15e-02 & 9.50e-02 & 69 \\ \hline
 2 & 6.07e-03 & 5.02e-02 & 129 \\ \hline
 3 & 3.13e-03& 2.59e-02 & 248 \\ \hline
 4 & 1.59e-03 & 1.31e-02 & 486 \\ \hline
 5 & 7.99e-04 & 6.58e-03 & 964 \\ \hline
 6 & 4.01e-04 & 3.30e-03 & 1918 \\ \hline
\end{tabular}
\caption[Discrete unit length constraint errors of Algorithm \ref{alg:grad-flow} vs.\ time refinement]{Discrete unit length constraint errors and number of gradient flow iterations for a degree 1 defect with H\'{e}lein's condition being violated with $h=1/8$, $\tau = \frac{1}{2^\ell}$ for $\ell = 1,\ldots, 6$ and $\varepsilon = 10^{-3}/2$. Both the $L^1$ and $L^\infty$ errors for the discrete unit length constraint decreases linearly with $\tau$, which is expected from Theorem \ref{thm:energy_decrease} and Corollary \ref{cor:L1-constraint} if $h$ is fixed. Also, gradient flow iterations increase like $\mathcal{O}(\tau^{-1})$, which is also expected if $\varepsilon$ is fixed.
}\label{tab:quantitative-errors}
\end{center}
\end{table}

For the smallest meshsize $h$ and time step $\tau$, we plot the initial and final configuration in Figure \ref{fig:helene}: we see that twist is preferred over splay and bend. We also display the initial and final splay, bend and twist energies in Table \ref{tab:init-final-energy-helene}. Note that twist increases by an order of magnitude while bend does it by one from the initial to final configuration.  This confirms the suspicion that the liquid crystal can decrease the energy by reducing splay at a modest cost of increasing twist and bend.
\begin{figure}[htb]
\begin{minipage}{.38\textwidth}
\begin{center}
\includegraphics[width=1\linewidth]{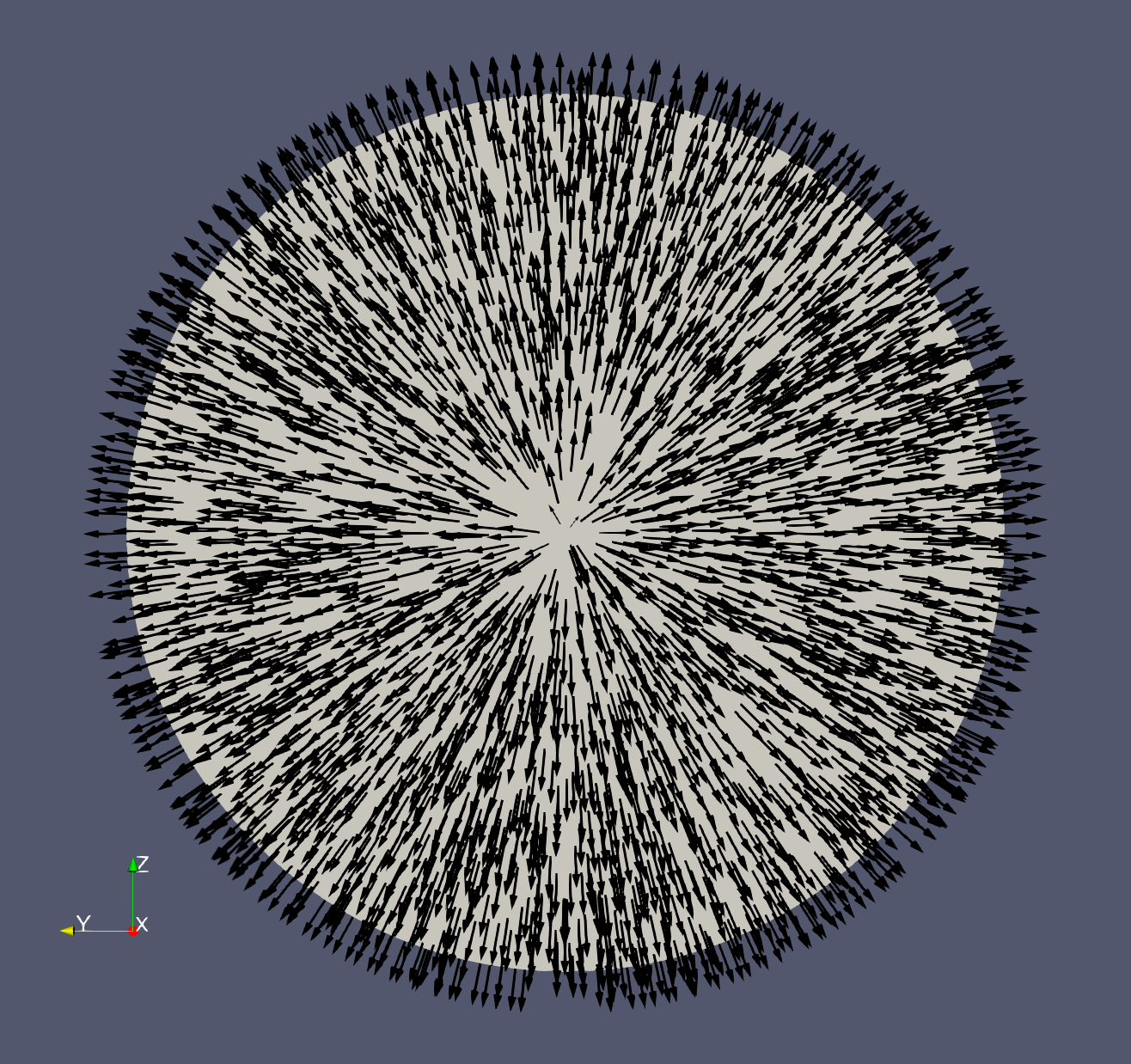}
\end{center}
\end{minipage}\hskip1.cm 
\begin{minipage}{.38\textwidth}
\begin{center}
\includegraphics[width=1\linewidth]{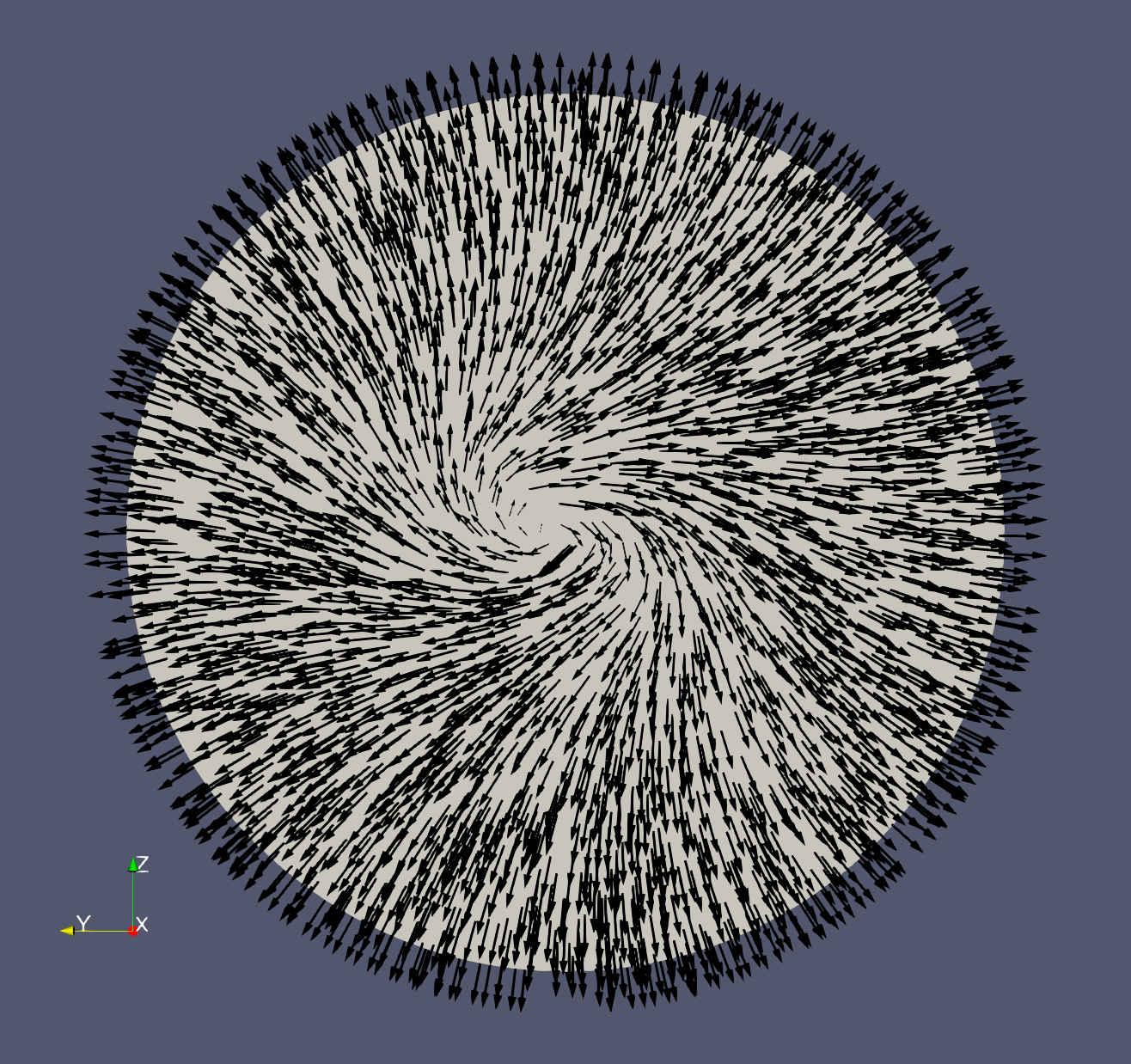}
\end{center}
\end{minipage}
\caption[Degree 1 defect under H\'{e}lene's condition]{Slice of the projected director field at $\{x=0\}$. Initial configuration (left) and computed minimizer (right) with $k_1=k_3=1$ and $k_2=.1$ and numerical parameters $h=\tau2^{-9/2}, \varepsilon= 10^{-3}/2$ (the stopping parameter of Algorithm \ref{alg:grad-flow}). Twist is preferred to splay and bend, in agreement with Helein's condition \eqref{eq:helene-condition} for $k_2$ sufficiently small relative to $k_1$ and $k_3$.
}\label{fig:helene}
\end{figure}
\begin{table}
\begin{center}
\begin{tabular}{ |c |c| c| c| }\hline
& $\textsf{splay}(\vn_h)$ & $\textsf{twist}(\vn_h)$ & $\textsf{bend}(\vn_h)$ \\\hline
 Initial & 49.4 & .0351 & .141 \\ \hline
 Final & 42.7 & 10.1 &    2.72\\  \hline
\end{tabular}
\caption[Initial and final components of Frank energy under H\'{e}lein's condition]{Initial and final splay, twist, and bend for computed solution with $k_1=k_3 = 1$ and $k_2=.1$ and $h=\tau=2^{-9/2}$ and $\varepsilon=10^{-3}/2$ (the stopping parameter of Algorithm \ref{alg:grad-flow}). Note that twist and bend increase by at least an order of magnitude while splay only decreases slightly. This sheds light on H\'{e}lein's condition being a tradeoff between splay and twist and bend.
}
\label{tab:init-final-energy-helene}
\end{center}
\end{table}

\subsubsection{Influence of Frank's constants on instability of degree 2 defect}

All simulations were computed with the following parameters
\begin{equation*}
\Omega = B_1(0), \quad h = \tau = \frac{1}{16}, \quad \varepsilon = 10^{-4}.
\end{equation*}
The starting configuration and boundary conditions is given by the degree 2 defect:
\begin{equation*}
\vn_2(\vx) = \pi^{-1}\left((\pi(\vx/|\vx|)^2\right),
\end{equation*}
where $\pi: \mathbb{S}^2\to \mathbb{C}$ is the stereographic projection. This example has been explored previously in the one constant case \cite{alouges1997new, bartels2005stability, cohen1987minimum}.

For all numerical simulations, the initial configuration is $\vn_h^{0} = I_h \vn_2$. Figure \ref{fig:deg2-oneconst} shows the initial condition and the result of the gradient flow for the one constant case $k_i=1$. Figure \ref{fig:deg2-k_3} shows the final configurations for $k_1=1,k_2 = .75$ and $k_3 = 1,3,5$. Note in Figure \ref{fig:deg2-k_3} that as $k_3$ increases, the computed solution transitions from two bending defects to two splay defects. In fact, as $k_3 = 1,3,5$, the value of $8(k_2 - k_1)+k_3$ from H\'{e}lein's condition is $-1, 1, 3$. When $8(k_2 - k_1)+k_3< 0$, we expect that bending and twist configurations are preferable based on H\'{e}lein's condition. Likewise for  $8(k_2 - k_1)+k_3\geq 0$, we expect splay configurations to be preferable. This heuristics is confirmed by the different configurations in Figure \ref{fig:deg2-k_3}.

\begin{figure}[htb]
\begin{minipage}{.38\textwidth}
\begin{center}
\includegraphics[width=1\linewidth]{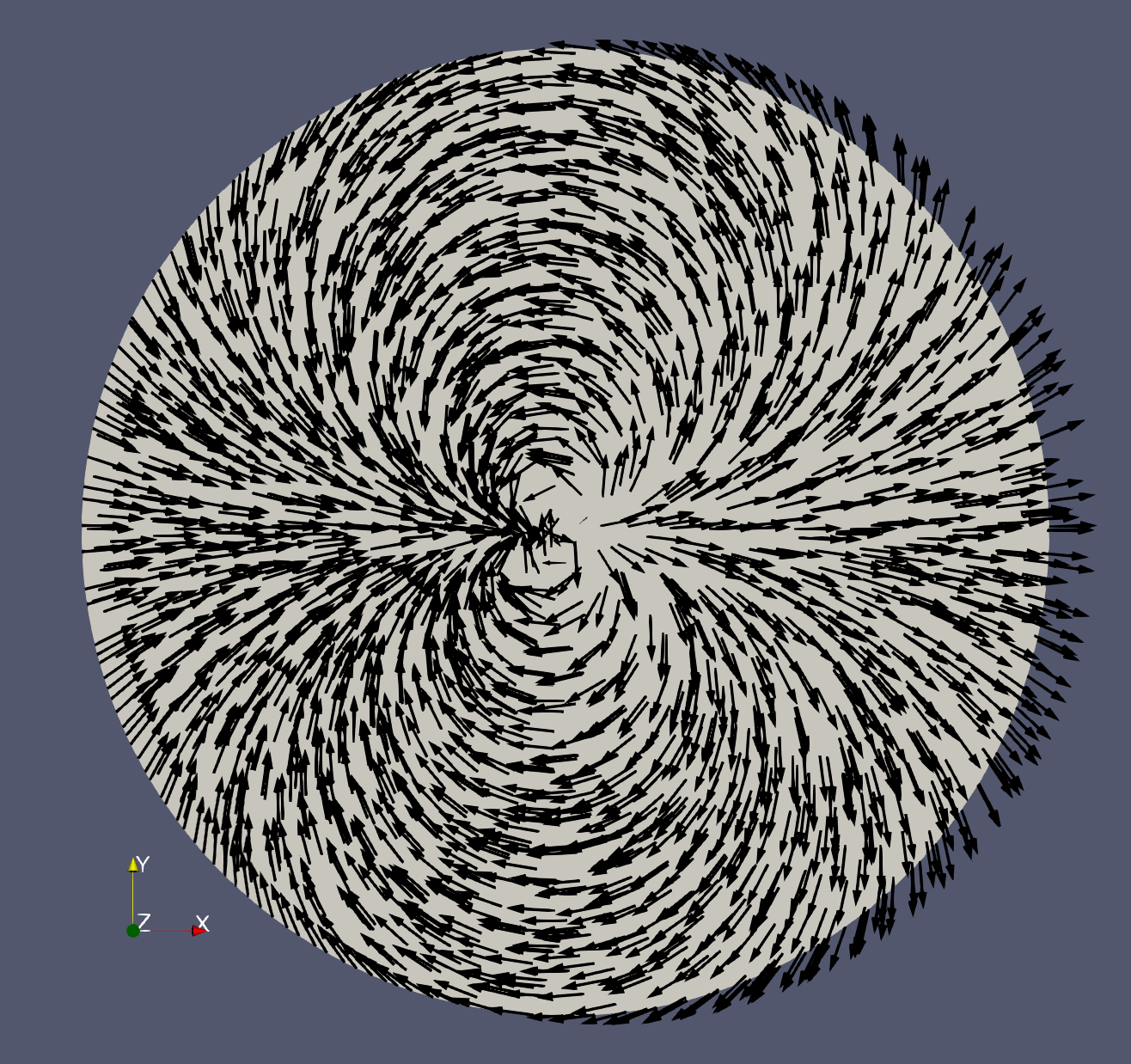}
\end{center}
\end{minipage}\hskip1.cm 
\begin{minipage}{.38\textwidth}
\begin{center}
\includegraphics[width=\linewidth]{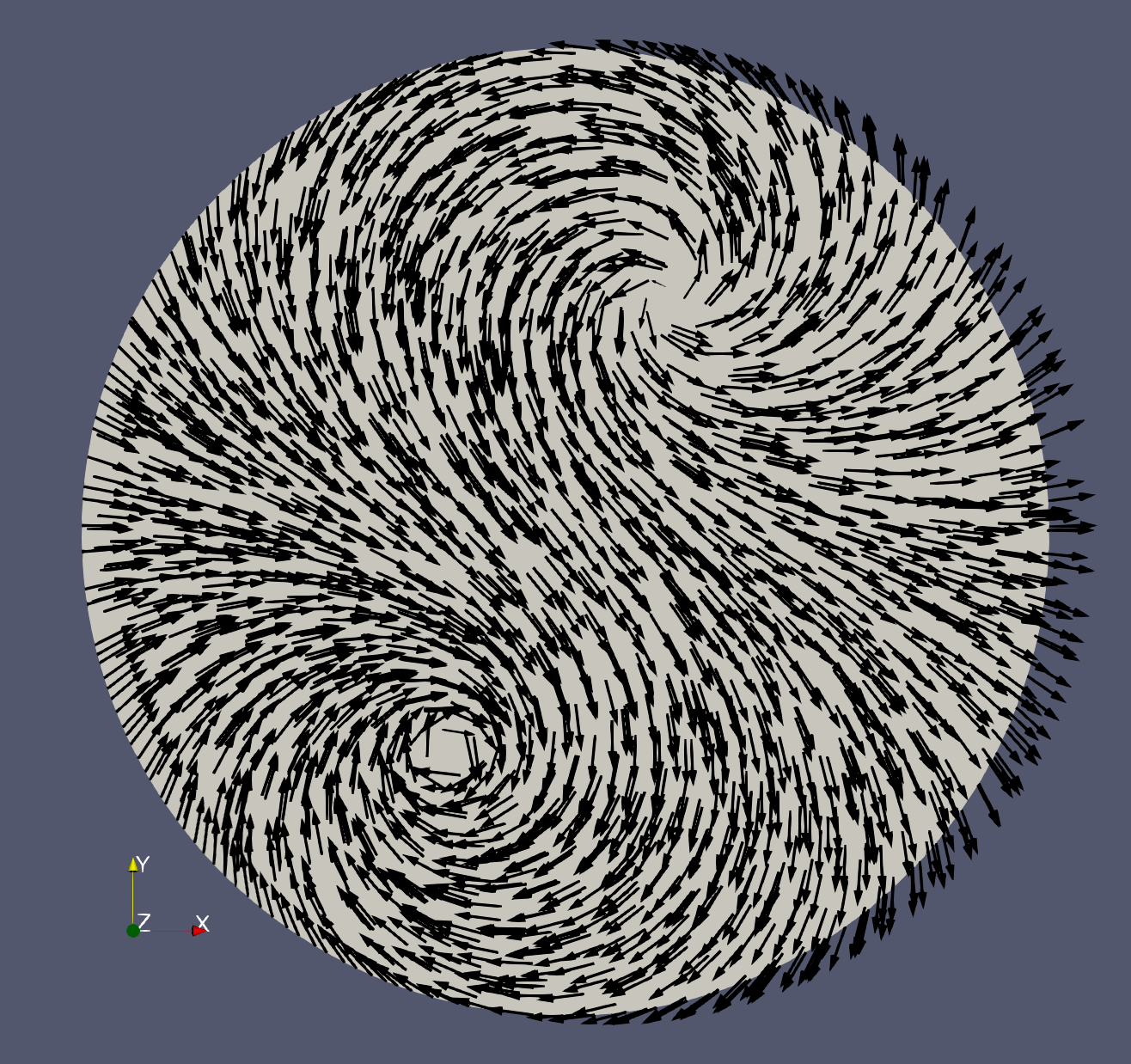}
\end{center}
\end{minipage}
\caption[Instability of degree 2 defect]{Initial and final projected director fields at $\{z=0\}$ from Algorithm \ref{alg:grad-flow} with $k_i=1$ for $i=1,2,3$, $h=\tau = 1/16$ and $\varepsilon = 10^{-4}$. This corresponds to the one-constant case $c_0=1,
    c_1=c_2=c_3=0$ in \eqref{eq:E-tilde2}. A degree 2 defect for the initial condition splits into two degree 1 defects.} \label{fig:deg2-oneconst}
\end{figure}
\begin{figure}[htb]
\begin{minipage}{.33\textwidth}
\begin{center}
\includegraphics[width=1\linewidth]{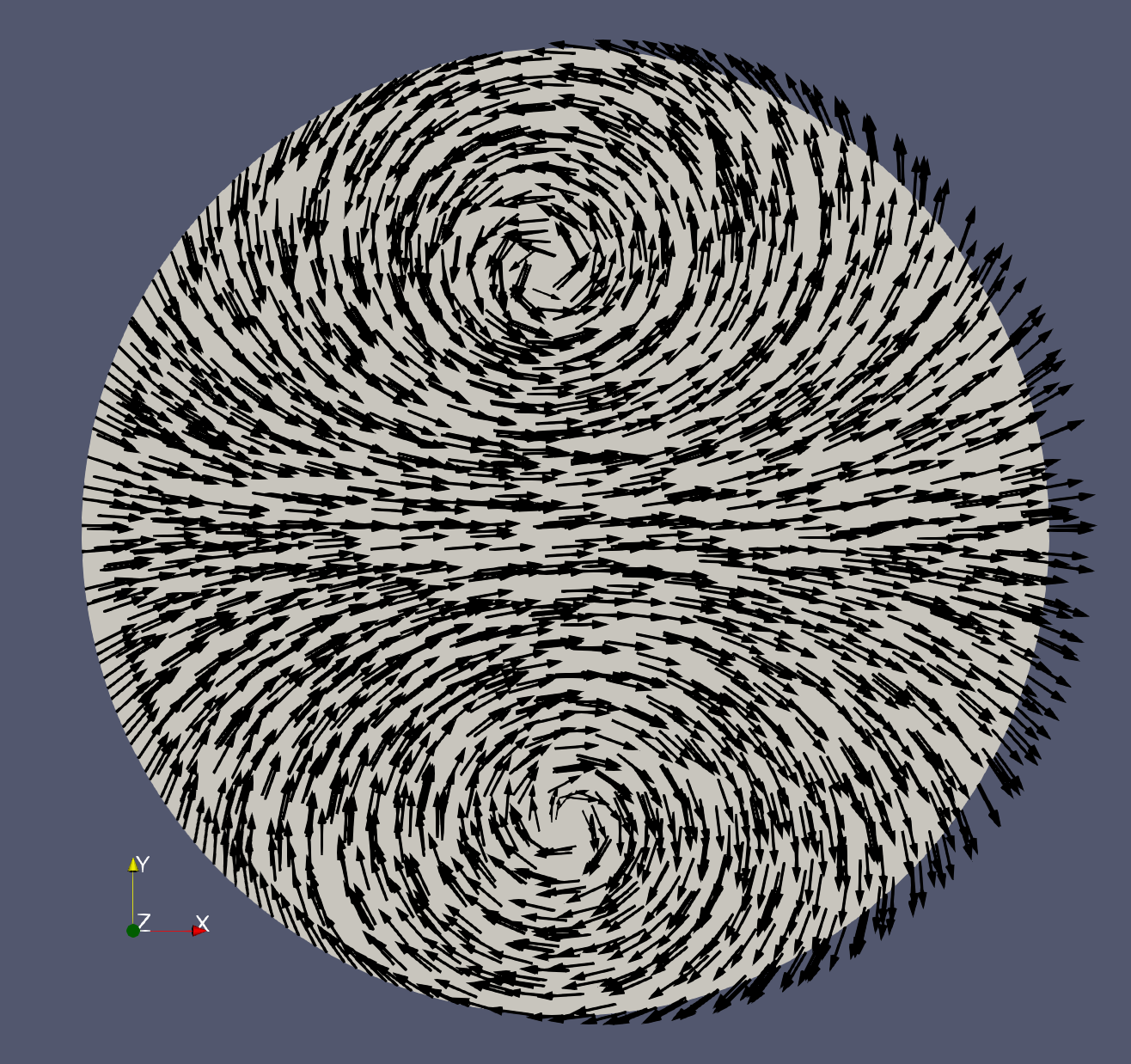}
\end{center}
\end{minipage}\hfill
\begin{minipage}{.33\textwidth}
\begin{center}
\includegraphics[width=1\linewidth]{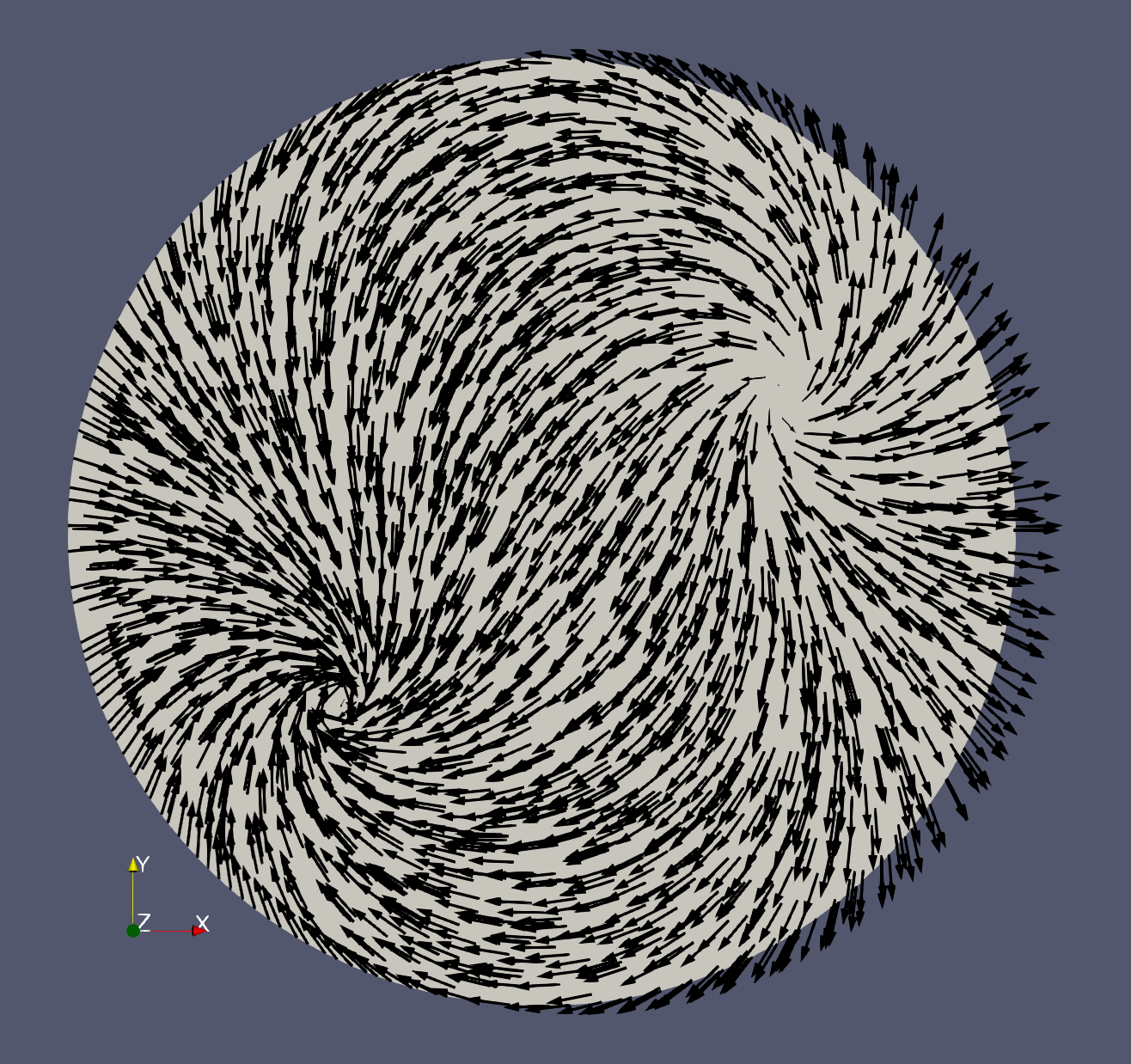}
\end{center}
\end{minipage}\hfill
\begin{minipage}{.33\textwidth}
\begin{center}
\includegraphics[width=1\linewidth]{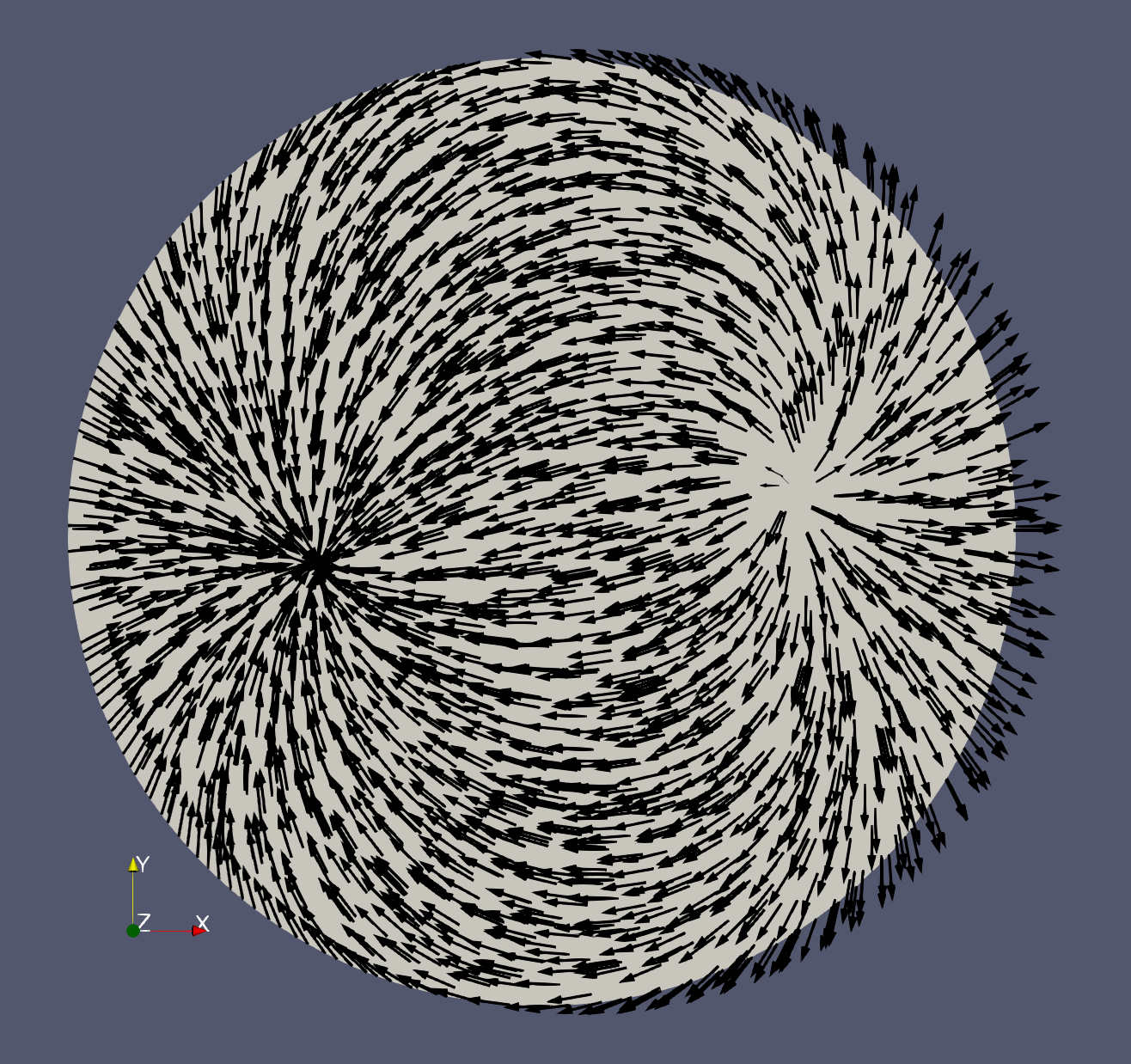}
\end{center}
\end{minipage}
\caption[Influence of Frank's constants on degree 2 defect]{Final projected director fields at $\{z=0\}$ from Algorithm \ref{alg:grad-flow} and $k_1=1,k_2=.75$ and $k_3 = 1,3,5$. The equilibrium configuration changes from two bending degree 1 defects for $k_3=1$ to two degree 1 splay like defects for $k_3=5$.}\label{fig:deg2-k_3}
\end{figure}

\subsection{Magnetic effects}

We present two numerical experiments, namely the Fr\'{e}edericksz transition and the magnetic field
configuration around a colloid with increasing intensity.

\subsubsection{Fr\'{e}edericksz transition}

We next study the Fr\'{e}edericksz transition \cite{freedericksz1933forces}, which is an experimental technique to determine Frank's constants $k_1,k_2,k_3$. Determining $k_4$ requires a more sophisticated experimental setup without strong anchoring, i.e.\ $\vn|_{\partial\Omega} = \vg$, so that the saddle splay term plays a role in determining minimizers \cite{allender1991determination}. We describe the set up to determine $k_1$. The domain is $\Omega = (-1,1)\times (0,w)$ and Dirichlet boundary conditions are set on the top and bottom boundaries $\Gamma_D = (-1,1)\times\{0,w\}$. For the splay configuration, the boundary condition is $\vg(x) = \ve_1$. The applied magnetic field is $\vH = H\ve_3$ for $H$ to be determined. Note that $\vn_0:=\vg$ has zero energy and is a critical point of $ E$. However, analysis in \cite{de1993physics, virga1995variational} shows that $\vn_0$ becomes unstable when
\begin{equation*}
H>\frac{1}{2w}\sqrt{\frac{k_1}{\chi_A}}.
\end{equation*}
\begin{figure}[htb]
\begin{minipage}{.4\textwidth}
\begin{center}
\includegraphics[width=1\linewidth]{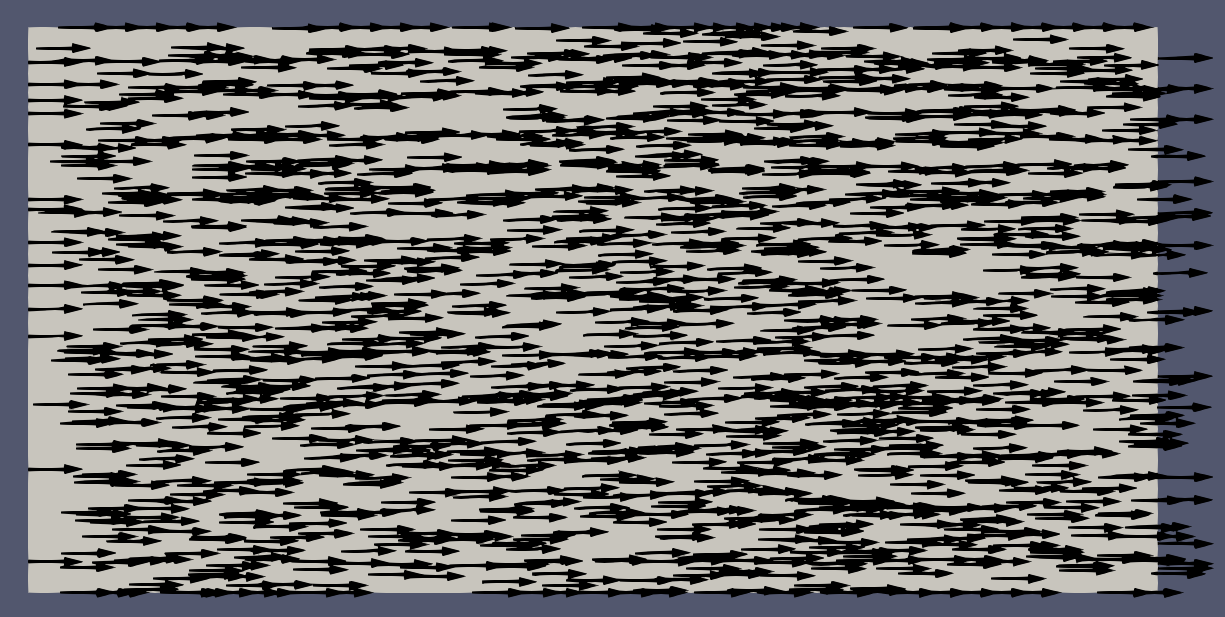}
\end{center}
\end{minipage}\hskip1.cm 
\begin{minipage}{.4\textwidth}
\begin{center}
\includegraphics[width=1\linewidth]{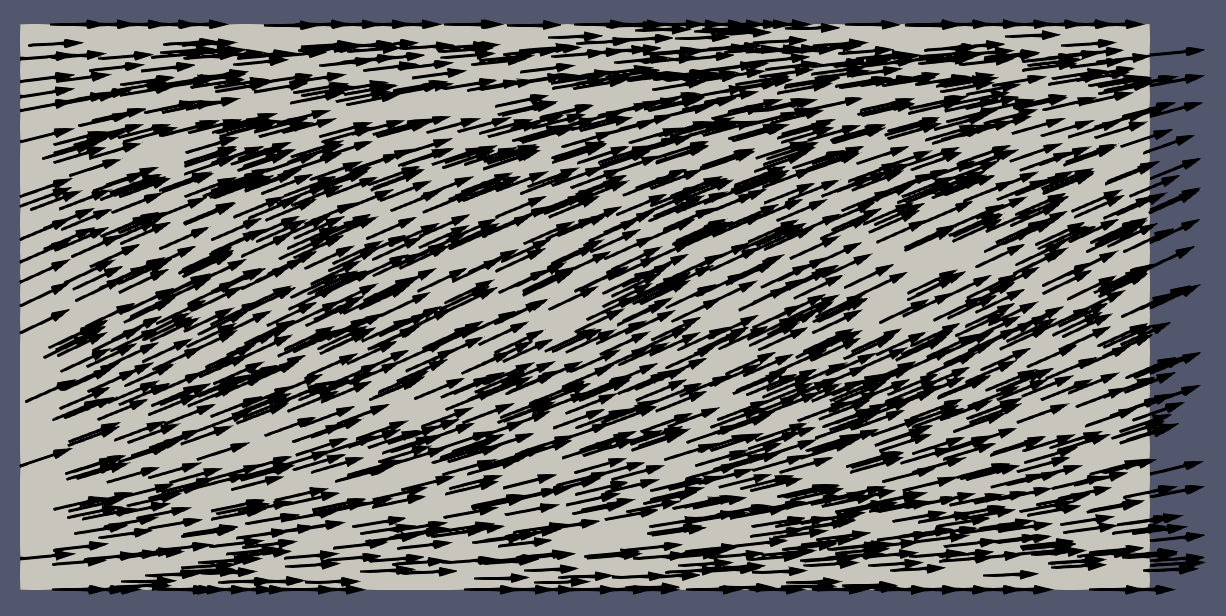}
\end{center}
\end{minipage}
\caption[Fr\'{e}edericksz transition]{Fr\'{e}edericksz transition: Initial and final configurations from Algorithm \ref{alg:grad-flow} and $k_1 = 2.3 , k_2 = 1.5, k_3 = 4.8, \chi_A = 1.21, H = 9.5$.}\label{fig:magnetic-fred}
\end{figure}
For the numerical experiment, we take the following material parameters
\[
 k_1 = 2.3 ,\; k_2 = 1.5,\; k_3 = 4.8; \; w = \frac{1}{2}, \; \chi_A = 1.21, H = 9.5,
\]
where $k_i$ are scaled constants for PAA at 125 degrees Celsius \cite[p. 123]{virga1995variational} and $\chi_A$ is the scaled constant for PAA at 122 degrees Celsius \cite[p. 174]{virga1995variational}.
The numerical parameters are
\[
h = \tau = \frac{1}{32}, \quad \varepsilon = \frac{10^{-4}}{2}.
\]
For the initial condition, we consider a perturbation of the equilibrium state
\[
\vn_h^{0} = I_h\left[\frac{\ve_1+\vu}{|\ve_1+\vu|}\right],\quad \vu = 256[x(1-x)y(1-y)]^2z(.5-z)\ve_3,
\]
with $\Vert\vu\Vert_{L^\infty(\Omega;\mathbb{R}^3)}\approx .03$. Figure \ref{fig:magnetic-fred} shows the initial and final configurations of the gradient flow. Note that Dirichlet boundary conditions were not imposed on the sides, so our use of the modified energy $ E$ may not be entirely faithful to $E_{FO}$. However we still see energy decrease in the gradient flow algorithm as evidenced by Figure \ref{fig:magnetic-energy-decay}. 

The reason why an experimenter can measure $k_1$ using this experiment is that the competition between the magnetic energy and the elastic energy only happens in the splay term. Table \ref{tab:magnetic-energy-components} shows that the splay is the dominant part of the energy that increases.

\begin{table}[h!]
	\begin{minipage}{0.5\linewidth}
\begin{tabular}{ |c |c| c| c| }\hline
& $\textsf{splay}(\vn_h)$ & $\textsf{twist}(\vn_h)$ & $\textsf{bend}(\vn_h)$ \\\hline
 Initial & 1.71e-03 & 5.15e-04 & 5.14e-04 \\ \hline
 Final & 1.82 & 2.35e-02 &    8.06e-02\\  \hline
\end{tabular}
\captionsetup{width=\linewidth}
\caption[Initial and final components of Frank energy under Fr\'{e}edericksz transition]{Fr\'{e}edericksz transition: Initial and final splay, twist, and bend for computed solution for the Fr\'{e}edericksz transition experiment with $k_1 = 2.3 , k_2 = 1.5, k_3 = 4.8, \chi_A = 1.21, H = 9.5$ and numerical parameters $h = \tau = \frac{1}{32},$ and $\varepsilon = \frac{10^{-4}}{2}.$ Note the large increase in splay relative to bend and twist indicating that most of the increase in the elastic energy is due to splay.
}\label{tab:magnetic-energy-components}
	\end{minipage}\hfill
	\begin{minipage}{.47\linewidth}
		\centering
		\includegraphics[width=\linewidth]{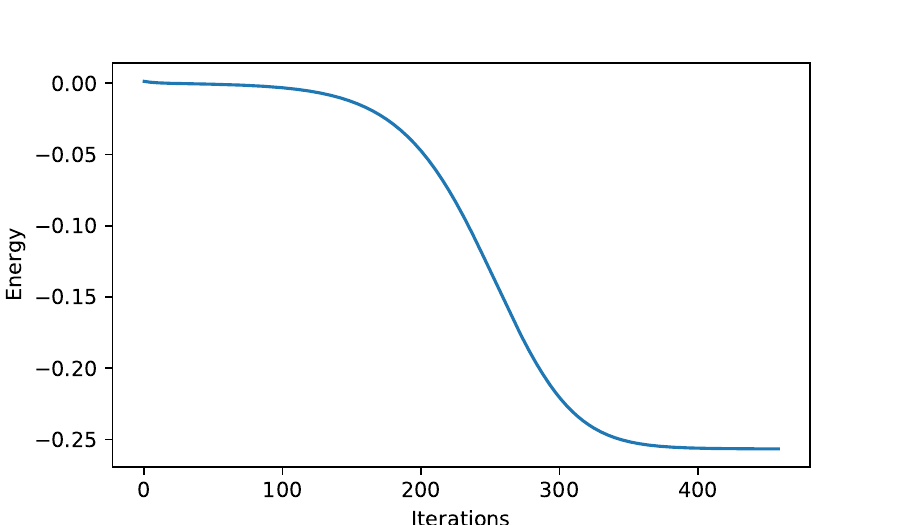}
		\captionsetup{width=\linewidth}
		\captionof{figure}{Energy decay for Fr\'{e}edericksz transition experiment vs gradient flow iterations.}
		\label{fig:magnetic-energy-decay}
	\end{minipage}
\end{table}

\subsubsection{Magnetic effects and a colloid}

This computational example reveals the influence of the magnetic field on a liquid crystal configuration around a colloid.  One salient feature of computing with a colloid is the \emph{inherent difficulty to create weakly acute meshes in 3d} due to the domain topology (see Fig.~\ref{fig:colloid}), and hence to realize projection methods that enforce energy decrease. This contrasts strikingly with the simplicity of the current projection-free approach. The setup is similar to what was done in \cite{nochetto2017finite}. The domain, boundary conditions, and magnetic field are  
\[
\Omega = [-2,2]^3\setminus B_{3/4}(0), \quad \vg(\vx) = \begin{cases}\vx/|\vx|, &\vx\in \partial B_{3/4}(0)\\ \ve_3,& \vx\in \partial [-2,2]^3\end{cases},\quad \vH = H\ve_2,
\]
where $H = 0,1,2,4$. Note that the magnetic field $\vH$ is orthogonal to the outer boundary $\vg = \ve_3$. In this sense, this setup is similar to the Fr\'{e}edericksz transition computed earlier. There is competition between matching the outer boundary condition and paying little elastic energy versus reducing the magnetic energy. The main difference with the Fr\'{e}edericksz transition is twofold: first the Dirichlet boundary condition is enforced everywhere on $\partial\Omega$, and second the presence of the colloid. The numerical parameters of Algorithm \ref{alg:grad-flow} are
\[
h=\frac{1}{8},\quad \tau =\frac{h}{4}, \quad \varepsilon = 10^{-4}.
\]

Figure \ref{fig:colloid} shows $\vn_h^{\infty}$ with varying $H$. Note that for $H=1,2$, the computed minimizer looks quite similar to the $H=0$ case. For $H=4$, the computed $\vn_h^{\infty}$ is nearly parallel to the magnetic field, except near the boundary, where Dirichlet boundary conditions are imposed. We can see that $\vn_h^{\infty}$ is nearly parallel to the magnetic field for $H=4$ since the final energy is ${E}_{total}[\vn_h^{\infty}]\approx -120$ while the final energies for $H=0,1,2$ are approximately $28.4, 27.1, 18.9$ respectively. This suggests a transition similar to Fr\'{e}ederickz occurs where the magnetic field overcomes the elastic energy.
\begin{figure}[htb]
\begin{center}
\begin{minipage}{.41\textwidth}
\begin{center}
\includegraphics[width=.9\linewidth]{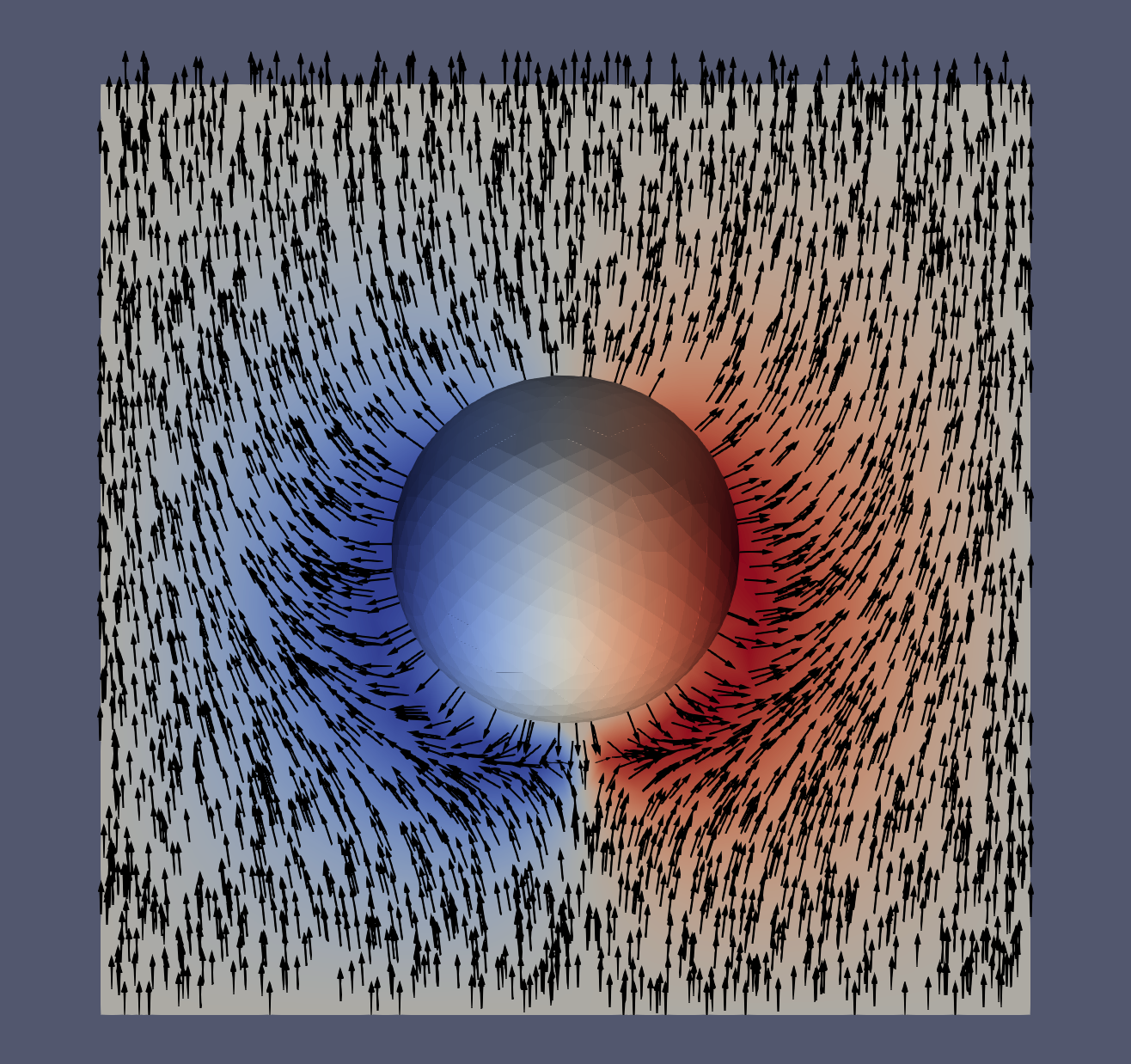}
\includegraphics[width=.9\linewidth]{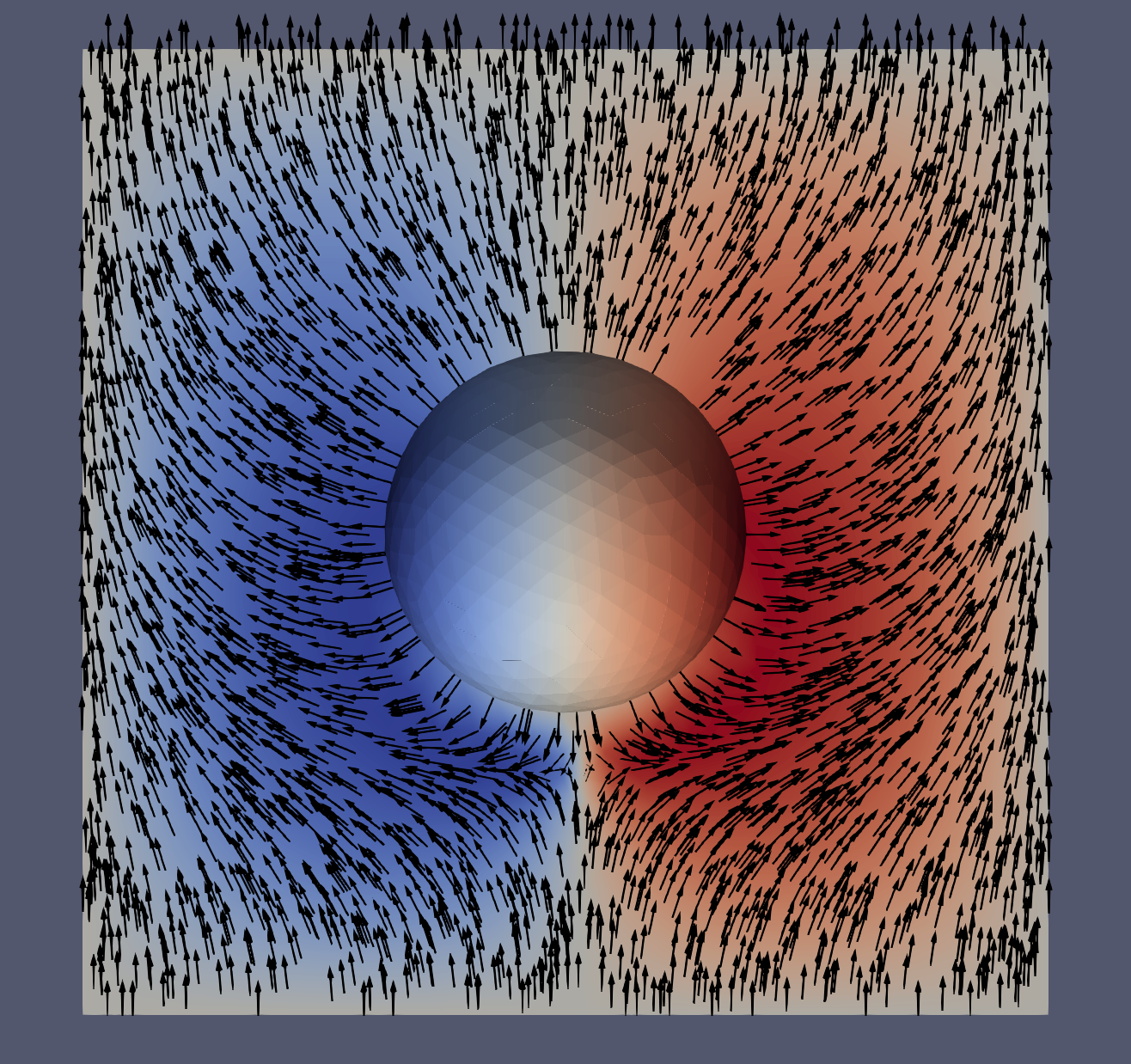}
\end{center}
\end{minipage}
\begin{minipage}{.41\textwidth}
\begin{center}
\includegraphics[width=.9\linewidth]{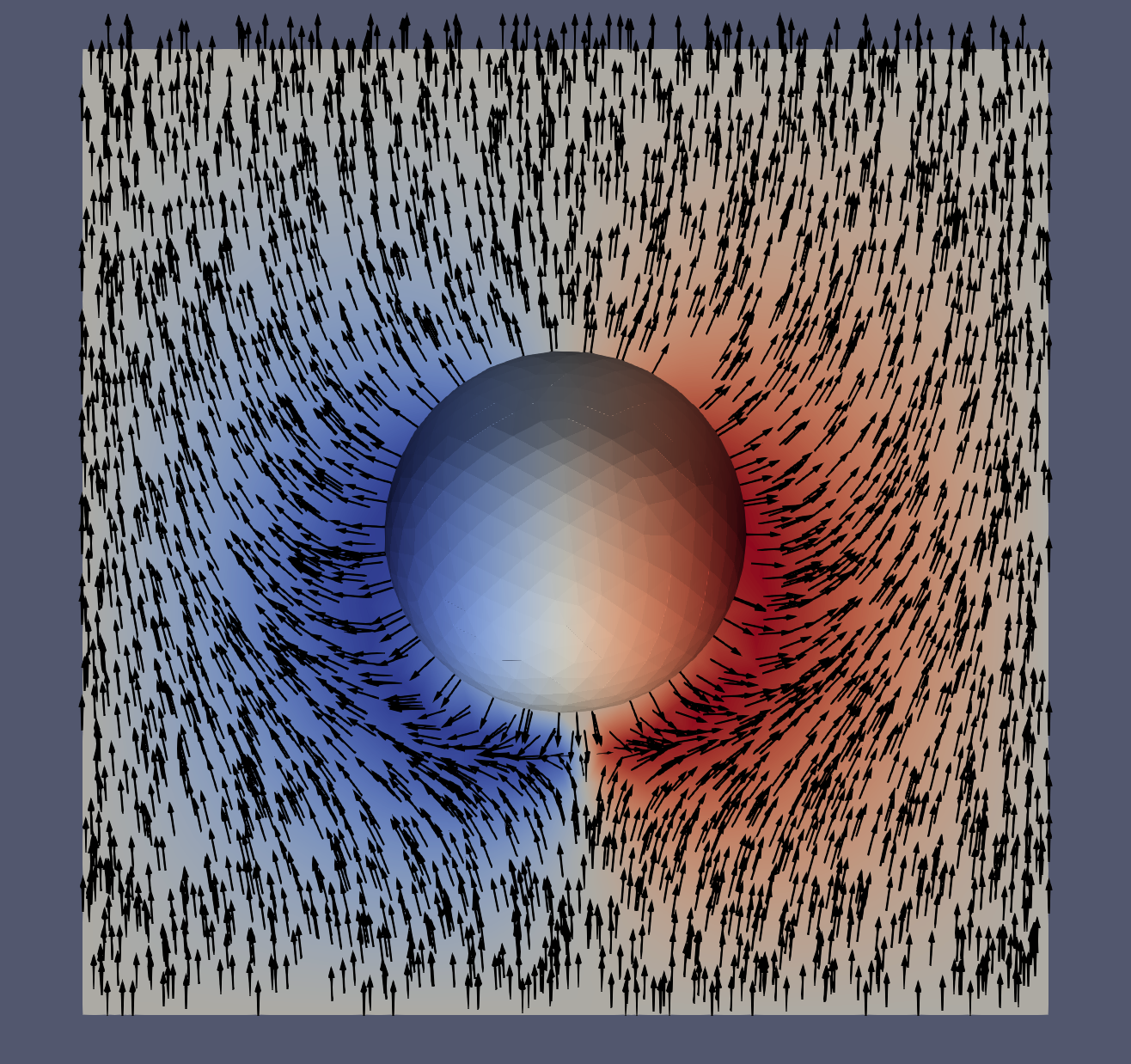}
\includegraphics[width=.9\linewidth]{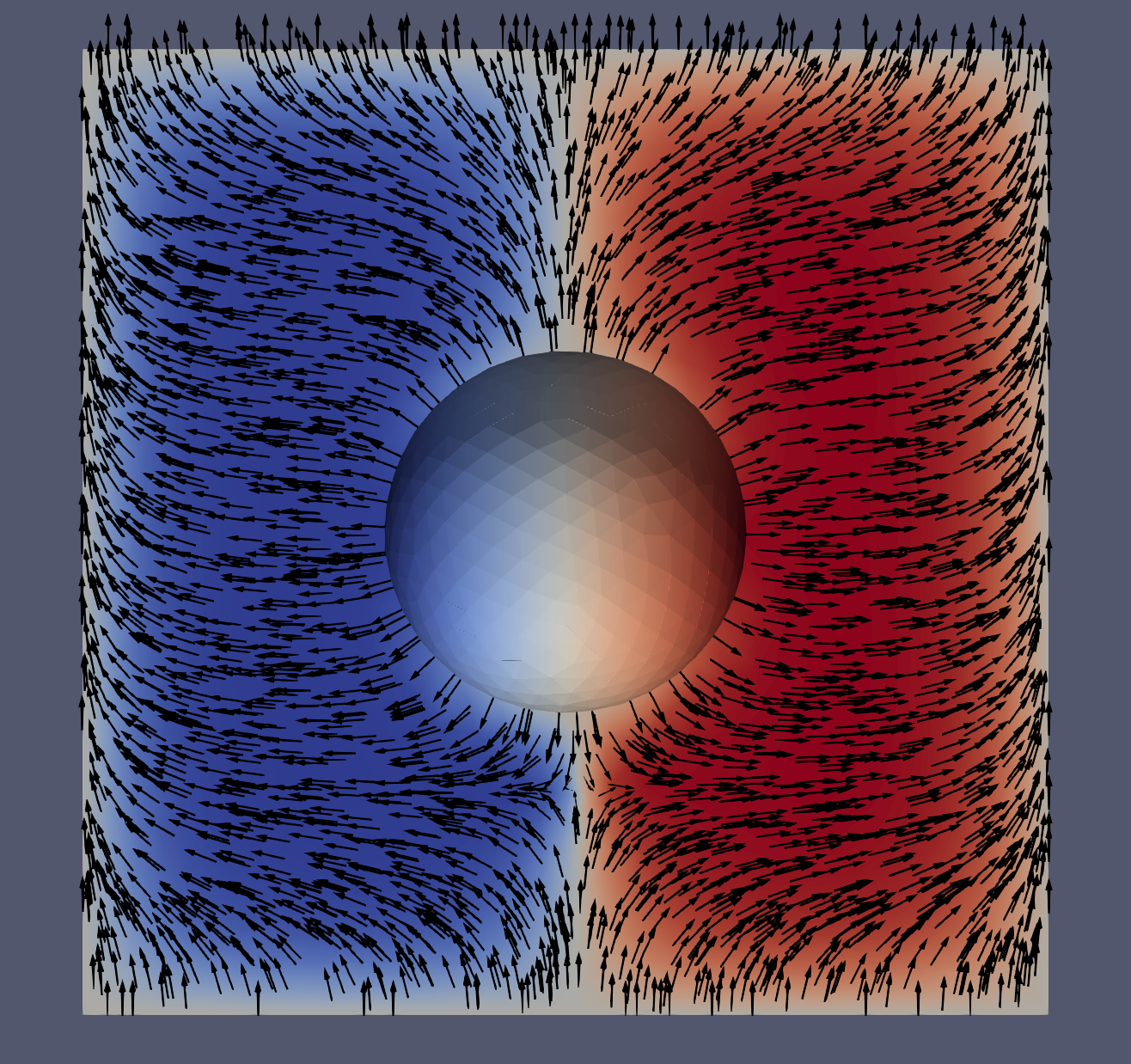}
\end{center}
\end{minipage}
\end{center}
\caption[Influence of magnetic field on liquid crystal with colloid.]{Influence of magnetic field on liquid crystal with colloid. Color corresponds to the $y$-component of $\vn_h^{\infty}$. Arrows are $\vn_h^{\infty}$ projected onto $\{x=0\}$ plane.
(Top left) $H=0$, (Top right) $H=1$, (Bottom left) $H=2$,  (Bottom right) $H=4$. 
Note the large change in behavior from $H=2$ to $H=4$. The director field for $H=2$ behaves more or less like $H=0$. However for $H=4$, the director field is almost totally parallel to $\ve_2$ except near the boundary; thereby suggesting the occurrence of a Fr\'{e}ederickz transition.}
\label{fig:colloid}
\end{figure}

\section{Conclusions}

We design and study a projection-free method for the computation of the full Frank-Oseen energy of nematic liquid crystals with Dirichlet boundary conditions.

\begin{itemize}
\item {\it $\Gamma$-convergence}: We show that our discrete problem $\Gamma$-converges to the continuous problem. The theory requires no regularity beyond $H^1(\Omega;\mathbb{R}^3)$ and allows for the presence of defects, which are of paramount importance in practice. The theory also has no restrictions on the Frank constants beyond what is required in the existence analysis and includes the effect of a fixed magnetic field.
  
\item {\it Projection-free gradient flow}: We propose a projection-free gradient flow in the spirit of \cite{bartels2016projection}, which applies to general shape-regular meshes that may not be weakly acute. Each step of the gradient flow entails solving a linear algebraic system due to the explicit treatment of the nonlinearities, which is similar to approaches in bilayer plates \cite{bartels2022stable, bonito2023gamma}. The gradient flow is energy stable, and we control the unit length constraint under the condition $\tau h^{-1}\leq \Cstab$, where $\Cstab$ depends on the Frank constants and initial data. 

\item {\it Violation of constraints}: Our  discrete admissible set only requires control over $\Vert I_h[|\vn_h|^2-1]\Vert_{L^1(\Omega)}$ and $\Vert \vn_h\Vert_{L^\infty(\Omega;\mathbb{R}^3)}$, which is easily enforced by our gradient flow algorithm. We also note that in order for the gradient flow algorithm output $\vn^\infty_h$ to satisfy $\Vert I_h[|\vn_h^\infty|^2-1]\Vert_{L^1(\Omega)}\to0$, we only need to take $\tau \approx h$, which is a mild condition.

\item {\it Computations}: We present computations on how the Frank constants influence the structure of defects. These seem to be the first such computations supported by theory. We also present computations of magnetic effects, including the interaction of a magnetic field on a liquid crystal around a colloid. This problem is notoriously difficult to assess with weakly acute meshes \cite{nochetto2017finite}.
\end{itemize}

\bibliographystyle{plain}
\bibliography{references}

\end{document}